\newcommand{\nocontentsline}[3]{}
\newcommand{\tocless}[2]{\bgroup\let\addcontentsline=\nocontentsline#1*{#2}\egroup}
\newtheorem{teorema}{Theorem}[subsection]
\newtheorem*{teoA}{Theorem A}
\newtheorem*{teoB}{Theorem B}
\newtheorem*{teoC}{Theorem C}
\newtheorem{teorem}[teorema]{}
\newtheorem{corolario}[teorema]{Corollary}
\newtheorem{lema}[teorema]{Lemma}
\newtheorem{proposicion}[teorema]{Proposition}
\newtheorem{prop}{Proposition}[section]
\theoremstyle{definition}
\newtheorem{definicion}[teorema]{Definition}
\newtheorem{notacion}[teorema]{Notation}
\newtheorem{observacion}[teorema]{Remark}
\newcommand{\overbar}[1]{\mkern 1.5mu\overline{\mkern-1.5mu#1\mkern-1.5mu}\mkern 1.5mu}
\newcommand{\dom}{\mathrm{dom}}
\newcommand{\cod}{\mathrm{cod}}
\newcommand{\E}{\mathscr{E}}
\newcommand{\typea}{\mathbf{A}}
\newcommand{\typef}{\mathbf{f}}
\newcommand{\llocal}{\mathcal{L}}
\newcommand{\topos}{\mathscr{T}}
\newcommand{\teo}{\mathrm{Th}}
\newcommand{\eq}{\mathrm{eq}}
\newcommand{\uno}{\mathbf{1}}
\def\C{\mathscr C}
\newcommand{\Pot}{\mathbf{P}}
\newcommand{\Sub}{\mathrm{Sub}}
\newcommand{\subob}{\mathit{\Omega}}
\newcommand{\FE}{\mathbb{E}}
\begin{document}
\title[On logical parameterizations and functional representability in LST]{On logical parameterizations and functional representability in local set theories}
\author[Ruiz]{Enrique Ruiz Hern\'andez${}^\dagger$}
\address[$\dagger$]{Centro de Investigaci\'on en Teor\'ia de Categor\'ias y sus Aplicaciones, A.C. M\'e\-xi\-co.}
\curraddr{}
\email{e.ruiz-hernandez@cinvcat.org.mx}

\author[Sol\'orzano]{Pedro Sol\'orzano${}^\star$}
\address[$\star$]{Instituto de Matem\'aticas, Universidad Nacional Aut\'onoma de M\'exico, Oaxaca de Ju\'a\-rez, M\'e\-xi\-co.}
\curraddr{}
\email{pedro.solorzano@matem.unam.mx}
\thanks{($\star$) {CONACYT-UNAM} Research fellow.}
\date{\today}

\begin{abstract} 
There is a well-known inclusion $\iota_\E$ of a topos $\E$ in the linguistic topos $\topos(\Sigma)$ of its internal language $\Sigma$ that proves both toposes to be equivalent. There is also a canonical translation $\eta_S$ for any local set theory $S$ into the local set theory $\Sigma$ of its linguistic topos. Starting from a local set theory, this yields two a priori distinct inclusions from $\topos(S)$ to $\topos(\Sigma)$. 

Herein, these two functors are proved to be isomorphic. Furthermore, the concept of logical parameterization is investigated and then applied to see that $\iota_{\topos(S)}$ parameterizes $\topos(\eta_S)$ in such a way that syntactic $S$-functions are represented by themselves in $\Sigma$. 
\end{abstract}
\subjclass[2010]{Primary 18C50; Secondary 03B38, 03F50}
\keywords{Local set theories, function symbols, categorical logic}

\maketitle

\tocless\section{Motivation}
A local language, as considered by Zangwill (see \cite{MR972257}), is a typed language $\llocal$ with primitives $\in$, $\{\,:\,\}$ and $=$; ground, product and power type symbols; distinguished type symbols $\bm 1, \bm \Omega$; and function symbols; together with a natural intuitionistic deduction system. A local set theory (LST) is a collection $S$ of sequents $\Gamma:\alpha$, closed under derivability. Thus, the resulting logic of an LST might still not be classical. A local set theory $S$ produces a category $\topos(S)$ with its syntactical sets and functions as in classical Set Theory. Said categories are also toposes. Conversely, starting with a topos, a local language can be produced---e.g. Mitchell-B\'enabou, Zangwill, Lambek-Scott (See \cite{MR0470019,MR972257,MR856915})---whose linguistic topos is equivalent to the starting topos.  

For any topos $\E$, denote by $\teo(\E)$ the LST associated to it. In particular, its type and function symbols are the objects and arrows of $\E$. There is a {\em canonical inclusion} $\iota_\E:\E\rightarrow\topos(\mathrm{Th}(\E))$, given by $\iota_\E X:=\{u_{\bm X}:\top\}=:U_{\bm X}$ and $\iota_\E f$ is the syntactic function $(x\mapsto\typef(x)):U_{\bm X}\bm{\rightarrow}U_{\bm Y}$.  The Equivalence Theorem (also cf. \cite[Proposition II.13.3]{MR856915}) shows that for any topos $\E$, $\iota_\E$ is an equivalence of categories. 

On the other hand, for any LST $S$ there is a natural translation $\eta_S:S\to\teo(\topos(S))$ which in particular sends every type symbol $\typea$ of $S$ to the type symbol corresponding to the universal object $U_{\typea}$ and every function symbol $\mathbf f$ to the function symbol corresponding to $(x\mapsto \mathbf f(x))$.  Since any translation induces a logical functor between the corresponding linguistic toposes, one has two a priori distinct functors:
\begin{equation}\label{eq:twoamigos}
\iota_{\topos(S)}, \topos(\eta_S):\topos(S)\rightarrow\topos(\mathrm{Th}(\topos(S))).
\end{equation}
From the syntactic viewpoint, the more intuitive of the two is $\topos(\eta_S)$, yet it is $\iota_{\topos(S)}$ the one used to prove the equivalence of categories. To the best of our knowledge, nowhere in the literature is any relationship between both of them asserted.

In an arbitrary LST, syntactic functions are not in general of the form $(x\mapsto \tau)$, where $\tau$ a valid term in the language with $x$ its free variable.  Bell resolves this for an individual syntactic function $f:X\to Y$ (and thus for finitely many) by manually changing both the language and the LST by adding a function symbol $\typef $ and the corresponding axioms to guarantee that the symbolic function $(x\mapsto\typef(x))$ coincides with the given syntactic function when restricted to $X$, i.e. by adding
\begin{equation}\label{eq:predic}
x\in X:\langle x,\typef(x)\rangle\in \lvert f\rvert.
\end{equation}
This procedure is a basic feature of everyday mathematical practice, yet it requires a change of topos every time a new function symbol is added.  
To a certain degree, the Equivalence Theorem provides a global alternative to the iterative procedure given by the addition of sequents of the form \eqref{eq:predic}. Within its proof, an inverse to $\iota$,
\begin{equation}\rho_\E:\topos(\mathrm{Th}(\E))\rightarrow\E,
\end{equation}
is constructed as follows: For $\bm{\{}x\bm{:}\alpha\bm{\}}$ of type $\mathbf{PA}$ in $\topos(\mathrm{Th}(\E))$, $\rho_\E \bm{\{}x\bm{:}\alpha\bm{\}}$ is the kernel of the interpretation $\llbracket\alpha\rrbracket_x$ of $\alpha$ in $\E$. For $\bm{\{}x\bm{:}\alpha\bm{\}}\overset{\varphi}{\bm{\rightarrow}}\bm{\{}y\bm{:}\beta\bm{\}}$ in $\topos(\mathrm{Th}(\E))$, $\rho_\E \varphi:\rho_\E \bm{\{}x\bm{:}\alpha\bm{\}}\bm{\rightarrow}\rho_\E \bm{\{}y\bm{:}\beta\bm{\}}$ is the unique $\psi$ such that 
\begin{equation}\label{eq:definingGf}
\vdash_{\mathrm{Th}(\E)}\langle\bm{i_{\{x:\alpha\}}}(u),\bm{i_{\{y:\beta\}}}(\bm{\psi}(u))\rangle\bm{\in}\lvert \varphi\rvert.
\end{equation}
This equation shows precisely the extent by which achieving strict representability (as the one obtained from \eqref{eq:predic}) fails: The representation of $\varphi$ as an explicit function $(x\mapsto \tau)$ is only fulfilled up to isomorphism. Also, the asserted existence of $\psi$, albeit constructive, is never traced. 

\

\tocless\section{Main Results}
The purpose of this communication is to investigate the categorical, logical, and syntactical relationship between the two functors in \eqref{eq:twoamigos}.

We start by giving an explicit description of $\psi$ in \eqref{eq:definingGf} in the case where $\varphi$ is of the form $\topos(\eta_S)f$, and as a consequence establishing the categorical isomorphism between the two functors in \eqref{eq:twoamigos}. Namely, 
\begin{teoA}\hypertarget{T:TeorA} 
Let $S$ be a local set theory in a local language $\llocal$. Let $f:X\rightarrow Y$ be an $S$-function with $X$ of type $\mathbf{PA}$ and $Y$ of type $\mathbf{PB}$. Let $i_X$ y $i_Y$ be the functions $(x\mapsto x):X\rightarrow U_{\typea}$ and  $(y\mapsto y):Y\rightarrow U_{\bm B}$, respectively.
Then, 
\begin{equation}\label{E:symboloffasfunction}
\vdash_{\teo(\topos(S))}\langle\bm{i_X}(u),\bm{i_Y}(\typef(u))\rangle\in\lvert \topos(\eta_S)f\rvert.
\end{equation}
That is, $f$ is the only $S$-function for which the following diagram commutes in $\topos(\teo(\topos(S)))$:
\begin{equation}\label{E:diagramanatural}
\xymatrix{
U_{\bm{X}}\ar[r]^{r_X}\ar[d]_{(x\mapsto\typef(x))} & \topos(\eta_S)X\ar[d]^{\topos(\eta_S)f}\\
U_{\bm{Y}}\ar[r]_{r_Y} & \topos(\eta_S)Y,
}
\end{equation}
where  $r_X$ is the corestriction of $(u\mapsto\bm{i_X}(u))$ to $\topos(\eta_S)X$.
In other words, the inclusion functor $\iota_{\topos(S)}$ and the logical functor induced by the canonical translation $\eta_S$ are isomorphic:
\[\topos(\eta_S)\cong \iota_{\topos(S)}.\]
\end{teoA} 
Once this was established, it was clear that a better understanding of how the change of variables was occurring from $\topos(\eta_S)$ to $\iota_{\topos(S)}$ was in order. The following two observations are needed to enunciate the next result.  Firstly, the eliminability of descriptions in $\teo\topos(S)$ gives that any function $f$ with universal domain $U_{\typea}$ in $\topos(\mathrm{Th}(\topos(S)))$ can indeed be represented by 
\[(x\mapsto \tau_f)\]
for some appropriate term $\tau_f$ (see \ref{L:corestrictiontoX}).  Secondly, for any $S$-function $f:X\rightarrow Y$ and for any formula $\vartheta$ in $\llocal$ let
\[\vartheta^\natural_f\equiv\exists y(\langle x,y\rangle\in\lvert f\rvert\wedge\vartheta).
\]
\begin{teoB} \hypertarget{T:TeorB}
Let $\Sigma$ be a well-termed local set theory in a local language $\llocal$. Let $\typea$  and $\mathbf{B}$ be types in $\llocal$. Let $f:U_{\mathbf{B}}\rightarrow X$ be a surjective $\Sigma$-function with $X$ determined by the formula $\alpha$ with unique variable of type $\typea$, and $\tau_f$ a term that represents $f$.  Let $\gamma$ and $\Gamma$ be a formula and a finite set of formulae in $\llocal$. Then, 
\[
\Gamma,x\in X\vdash_{\Sigma}\gamma\quad\text{ if and only if }\quad\Gamma(x/\tau_f)\vdash_{\Sigma}\gamma(x/\tau_f).
\]

Additionally, if $f$ is also injective, then, for every formula $\vartheta$, 
\[\vdash_{\Sigma}\vartheta=\vartheta^\natural_{f^{-1}}(x/\tau_f).\]
In this case, 
\[
\Gamma\vdash_{\Sigma}\gamma\quad\text{if and only if}\quad\Gamma^\natural_{f^{-1}},x\in X\vdash_{\Sigma}\gamma^\natural_{f^{-1}}, 
\]
and furthermore, 
\begin{enumerate} 
\item $\vdash_{\Sigma}(\vartheta\square\gamma)^\natural_{f^{-1}}=\vartheta^\natural_{f^{-1}}\square\gamma^\natural_{f^{-1}}$, where $\square$ is any binary logical connective; 
\item $\vdash_{\Sigma}(\neg\vartheta)^\natural_{f^{-1}}=\neg\vartheta^\natural_{f^{-1}}$; and
\item if $u$ is free in $\vartheta$ but $x$ is not,  
\[\vdash_{\Sigma}\forall u\vartheta=\forall x\in X.\vartheta^\natural_{f^{-1}}\text{ and }\vdash_{\Sigma}\exists u\vartheta=\exists x\in X.\vartheta^\natural_{f^{-1}}.
\] 
\end{enumerate}
\end{teoB}
When applied to $\Sigma=\teo(\topos(S))$, Theorem B provides an explicit natural parameterization of any $\topos\eta_S(X)$ by the universal $\iota_S(X)$ preserving all logical operations.

To see that these parameterizations are fully compatible with doing mathematics internally (or to further justify the use of $\iota_S$ instead of $\topos\eta_S$), it remains to see that they behave nicely with respect to functions.  To this effect, observe that since any $S$-function $f:X\to Y$ is determined by a formula $\gamma(x,y)$ in $S$, the formula $\topos(\eta_S)\gamma(x,y)$ determines the function $\topos(\eta_S)f$ in $\teo(\topos(S))$.  By considering the term
\[\topos(\eta_S)\gamma(x/\bm{i_X}(u),y/\bm{i_Y}(v)),\]
one would expect to obtain another function $f^*:U_{\bf X}\to U_{\bf Y}$.  Applying Theorem B gives the following result. 
\begin{teoC}\hypertarget{T:TeorC} 
Let $S$ be a local set theory in a local language $\llocal$. Let $f:X\rightarrow Y$ be an $S$-function, and let $\gamma$ be a formula that determines $\lvert f\rvert$. Then 
\begin{equation}\label{E:fstar1} \vdash_{Th(\topos(S))}(\topos(\eta_S)\gamma(x/\bm{i_X}(u),y/\bm{i_Y}(v)))^\natural_{r_X^{-1}\times r_Y^{-1}}=\topos(\eta_S)\gamma,
\end{equation}
and the formula $\topos(\eta_S)\gamma(x/\bm{i_X}(u),y/\bm{i_Y}(v))$ indeed determines a function $f^*:U_{\bf X}\to U_{\bf Y}$ which furthermore coincides with $(u\mapsto \typef(u))$. In other words,
\begin{equation}
\vdash_{Th(\topos(S))} \langle u,\typef(u) \rangle\in \lvert f^*\rvert.
\end{equation}
\end{teoC}
The structure of this report is as follows. In Section \ref{S:Prelim}, we review all the prerequisites of Zangwill's local set theories. In Section \ref{S:Param}, the notion of logical parameterization is investigated in order to prove \hyperlink{T:TeorB}{Theorem B}. Lastly, in Section \ref{s:representabilidad}, \hyperlink{T:TeorA}{Theorem A} and \hyperlink{T:TeorC}{Theorem C} are proved. The latter is obtained by applying the techniques developed in Section \ref{S:Param}. A brief Appendix is included to further describe the tight syntactic relationship between $S$ and $\teo(\topos(S))$. 

\subsection*{Aknowledgements.} The second named author is supported by the CONAHCYT ``Investigadoras e Investigadores por M\'exico'' Program Project No. 61. The authors wish to thank an anonymous referee for greatly helping improve the presentation of the main results.

\section{Preliminaries}\label{S:Prelim}
A working knowledge of toposes and local set theories is assumed in this report. A detailed introduction to both subjects is given in \cite{MR972257}. The expert may proceed directly to Section \ref{S:Param}, and return to this section only when necessary.

The purpose of this section is to recall those basic features of these fields that will be required in what follows, as well as to fix the notation to be used henceforth. Except where noted, all the results are already in \cite{MR972257}. Therein, some of these results are given for very weak assumptions. Here they are recalled with the assumptions required. 

\subsection{Toposes} A topos is a category $\C$ with finite products, a subobject classifier and power objects. A subobject of an object $A$ is an equivalence class $[m]$ of monic maps $m$ with codomain $A$ under an appropriate relation. A subobject classifier is an object $\subob$  together with a map $\top:1\rightarrow\subob$ such that for any monic $m$ there exists a unique characteristic $\chi(m)$ such that the following is a pullback diagram.
\[
\xymatrix{
\dom(m)\ar[r]\ar[d]_m & 1\ar[d]^\top \\
\cod(m)\ar[r]_{\chi(m)} & \mathit{\Omega}
}
\]
And every $A\to\subob$ is of this form.  

Given an object $A$ of $\C$, a power object is a pair $(PA,e_A)$ of the object $PA$ and an arrow $e_A:A\times PA\rightarrow\subob$ such that for every $f:A\times B\rightarrow\subob$ there is a unique arrow $\hat{f}:B\rightarrow PA$ such that the following diagram commutes.
\begin{equation}\label{E:transposepower}\xymatrix{
A\times B\ar[dr]^f\ar[d]_{1_A\times\hat{f}} & \\
A\times PA\ar[r]_(.6){e_A} & \subob.
}
\end{equation}

In a topos $\C$, given an object $A$, there is a bijection between the class of $\Sub(A)$ of subobjects of $A$ and the class $\C(A,\subob)$ of arrows $A\rightarrow\subob$ in $\C$. 
Given a monic $m$ representing a subobject of $A$, its corresponding characteristic is $\chi(m):A\rightarrow\subob$; and given an arrow $u:A\rightarrow\subob$ in $\C$, its corresponding subobject is represented by $\bar{u}$.  It follows that  $u=\chi(\bar{u})$; $\left[\overbar{\chi(m)}\right]=[m]$; and that $[m]=[n]$ if and only if $\chi(m)=\chi(n)$.

From this, one can transfer the partial order $\subseteq$ unto $\C(A,\Omega)$, so that given $u,v\in \C(A,\subob)$, $u\leq v$ if and only if $\bar{u}\subseteq\bar{v}$. In particular,  this is equivalent to the existence of an arrow $f:\dom(\bar{u})\rightarrow\dom(\bar{v})$ such that
\begin{equation}\label{E:transferedorder}
\bar{u}=\bar{v}\circ f.
\end{equation}
Recall that the maximal characteristic arrow of an object $A$, denoted by $T_A$, is $T_A:=\chi(1_A):A\rightarrow\subob$. Wherever $A$ is understood, one writes simply $T$. Notice that $\overbar{T_A}=1_A$, and that $T_A$ is maximal since  $u\leq T_A$ for all $u:A\rightarrow\subob$.  Furthermore, recall that $T=v\circ\bar{u}$ if and only if $u\leq v$ and that 
\begin{equation}\label{E:relofmaximal}
T_C=v\circ f\text{ if and only if a }g:C\rightarrow\dom(\bar{v})\text{ exists with }f=\bar{v}\circ g.
\end{equation}




\subsection{Local languages}
The central definition of this communication is the definition of a local language.  A {\em local language} $\llocal$ is determined by the specification of the following classes of symbols: a true-value symbol $\mathbf{\Omega}$ and a unity type symbol $\uno$; a possibly empty collection of ground type symbols $\mathbf{A,B,C,}\ldots$; a possibly empty collection of function symbols $\mathbf{f,g,h,}\ldots$. The type symbols of $\llocal$ are now defined recursively as follows: $\uno,\mathbf{\Omega}$ are type symbols; any ground type symbol is a type symbol; if $\typea_1,\ldots,\typea_n$ are type symbols, so is the product $\typea_1\times\cdots\times\typea_n$ (with the proviso that, if $n=1$, then $\typea_1\times\cdots\times\typea_n$ is $\typea_1$, and if $n=0$, then $\typea_1\times\cdots\times\typea_n$ is $\uno$); and if $\typea$ is a type symbol, so is the power $\Pot\typea$.

For each type symbol $\typea$, $\llocal$ contains a denumerable set of symbols $x_\typea,y_\typea$, $z_\typea,\ldots$ called \textit{variables} of type $\typea$. In addition, $\llocal$ contains the symbol $\ast$. Each function symbol of $\llocal$ has a signature assigned of the form $\typea\rightarrow\mathbf{B}$, where $\typea,\mathbf{B}$ are type symbols. 

The terms with their corresponding types are recursively defined as follows: $\ast$ is a term of type $\uno$; for each type symbol $\typea$, variables $x_\typea,y_\typea,z_\typea,\ldots$ are terms of
type $\typea$; if $\mathbf{f}$ is a function symbol of signature $\typea\rightarrow\mathbf{B}$, and $\tau$ is a term of type $\typea$, $\mathbf{f}(\tau)$ is a term of type $\mathbf{B}$; if $\tau_1,\ldots,\tau_n$ are terms of types $\typea_1,\ldots,\typea_n$, $\langle\tau_1,\ldots,\tau_n\rangle$ is a term of type $\typea_1\times\cdots\times\typea_n$, with the proviso that if $n=1$, then $\langle\tau_1,\ldots,\tau_n\rangle$ is $\tau_1$, while if $n=0$, then $\langle\tau_1,\ldots,\tau_n\rangle$ is $\ast$; if $\tau$ is a term of type $\typea_1\times\cdots\times\typea_n$ and $1\leq i\leq n$, $(\tau)_i$ is a term of type $\typea_i$, with the condition that if $n=1$, then $(\tau)_1$ is $\tau$; if $\alpha$ is a term of type $\mathbf{\Omega}$, and $x_\typea$  is a variable of type $\typea$, $\{x_\typea:\alpha\}$ is a term of type $\Pot\typea$; if $\sigma,\tau$ are terms of the same type, $\sigma=\tau$ is a term of type $\mathbf{\Omega}$; and if $\sigma,\tau$ are terms of types $\typea$, $\Pot\typea$ respectively, $\sigma\in\tau$ is a term of type $\mathbf{\Omega}$.

A term of type $\mathbf{\Omega}$ is called a {\em formula}.  Arbitrary terms will be denoted by Greek letters $\sigma,\tau,\ldots$. Symbols $\omega,\omega',\omega'',\ldots$ will be reserved for variables of type $\mathbf{\Omega}$ and $\alpha,\beta,\gamma,\ldots$ for formulae.  An occurrence of a variable $x$ in a term $\tau$ is {\em bound} if it appears within a context of the form $\{x:\alpha\}$; otherwise the occurrence is {\em free}. A term with no free variables is said to be {\em closed}; a closed formula is called a {\em sentence}. For any terms $\tau,\sigma$ and any variable $x$ of the same type as $\sigma$ one writes $\tau(x/\sigma)$ (or sometimes just $\tau(\sigma)$) for the term obtained from $\tau$ by substituting $\sigma$ for each free occurrence of $x$. 

Similarly, for any variables $x_1,\ldots,x_n$ and terms $\sigma_1,\ldots,\sigma_n$ of the appropriate types, write $\tau(x_1/\sigma_1,\ldots,x_n/\sigma_n)$ or briefly $\tau(\bm{x}/\bm{\sigma})$ for the result of substituting $\sigma_i$ for $x_i$ in $\tau$ for $1\leq i\leq n$. 

Logical operations are defined in $\llocal$ as follows: Write

\begin{center}
\begin{tabular}{l l c l}
(L1) & $\alpha\Leftrightarrow\beta$ & for & $\alpha=\beta$ \\
(L2) & \textit{true} & for & $\ast=\ast$ \\
(L3) & $\alpha\wedge\beta$ & for & $\langle\alpha,\beta\rangle=\langle true,true\rangle$ \\
(L4) & $\alpha\Rightarrow\beta$ & for & $(\alpha\wedge\beta)\Leftrightarrow\alpha$ \\
(L5) & $\forall x\alpha$ & for & $\{x:\alpha\}=\{x:true\}$ \\
(L6) & \textit{false} & for & $\forall\omega.\omega$ \\
(L7) & $\neg\alpha$ & for & $\alpha\Rightarrow false$ \\
(L8) & $\alpha\vee\beta$ & for & $\forall\omega((\alpha\Rightarrow\omega\wedge\beta\Rightarrow\omega)\Rightarrow\omega)$ \\
(L9) & $\exists x\alpha$ & for & $\forall\omega(\forall x(\alpha\Rightarrow\omega)\Rightarrow\omega)$
\end{tabular}
\end{center}

In (L8) and (L9) $\omega$ is assumed to be of type $\mathbf{\Omega}$ not occurring in $\alpha$ or $\beta$. The unique description  $\exists x(\alpha\wedge\forall y(\alpha(x/y)\Rightarrow x=y))$ is abbreviated as $\exists!x\alpha$, where $y$ is different from $x$ and not free in $\alpha$.

The basic axioms, inference rules and thus all deductions in a local set theory are given in terms of {\em sequents} $\Gamma:\alpha$, where $\alpha$ is a formula and $\Gamma$ is a (possibly empty) finite set of formulae.  The {\em basic axioms} are the sequents:  $\alpha:\alpha$ (Tautology); $:x_\uno=\ast$ (Unity); $x=y,\alpha(z/x):\alpha(z/y)$ with $x$, $y$ free for $z$ in $\alpha$ (Equality);  $:(\langle x_1,\ldots,x_n\rangle)_i=x_i$, $:x=\langle(x)_1,\ldots,(x)_n\rangle$ (Products); and $:x\in\{x:\alpha\}\Leftrightarrow\alpha$ (Comprehension).  

The inference rules are the following:

\begin{longtable}{l p{9cm}l}
\textit{Thinning} & \AxiomC{$\Gamma:\alpha$}\UnaryInfC{$\beta,\Gamma:\alpha$}\DisplayProof \\
\multicolumn{2}{c}{} \\
\textit{Cut} & \AxiomC{$\Gamma:\alpha$}\AxiomC{$\alpha,\Gamma:\beta$}\BinaryInfC{$\Gamma:\beta$}\DisplayProof (free variables in $\alpha$ free in $\Gamma$ or $\beta$) \\
\multicolumn{2}{c}{} \\
\textit{Substitution} & \AxiomC{$\Gamma:\alpha$}\UnaryInfC{$\Gamma(x/\tau):\alpha(x/\tau)$}\DisplayProof ($\tau$ free for $x$ in $\Gamma$ and $\alpha$) \\
\multicolumn{2}{c}{} \\
\textit{Extensionality}\hspace{-50pt}& \AxiomC{$\Gamma:x\in\sigma\Leftrightarrow x\in\tau$}\UnaryInfC{$\Gamma:\sigma=\tau$}\DisplayProof ($x$ not free in $\Gamma,\sigma,\tau$) \\
\multicolumn{2}{c}{} \\
\textit{Equivalence} & \AxiomC{$\alpha,\Gamma:\beta$}\AxiomC{$\beta,\Gamma:\alpha$}\BinaryInfC{$\Gamma:\alpha\Leftrightarrow\beta$}\DisplayProof
\end{longtable}

\

For any collection $S$ of sequents, a {\em proof from $S$} is a finite tree with its vertex at the bottom whose nodes are correlated downwards by direct consequence by one of the rules of inference and topmost nodes are correlated with either a basic axiom or a member of $S$. The sequent at the vertex is the {\em conclusion} of the proof.   A sequent $\Gamma:\alpha$ is {\em derivable from $S$}, and write $\Gamma\vdash_S\alpha$ provided there is a proof from $S$ of which the sequent $\Gamma:\alpha$ is the conclusion. Following \cite{MR972257}, successive applications of the cut rule will be written in the form $\Gamma\vdash_S\alpha_1\vdash_S\alpha_2\vdash_S\cdots\vdash_S\alpha_n$ yielding $\Gamma\vdash_S\alpha_n$.

In \cite{MR972257} it is proved that all the logical operations (L1)-(L9) satisfy the laws of intuitionistic logic. For convenience, the following is a list of some of those that are being used in the sequel. 
\begin{teorem}[\cite{MR972257} 3.2.3]\label{t:implicationinhypotheses}
(i)\AxiomC{$\Gamma:\alpha$}\AxiomC{$\beta,\Gamma:\gamma$}\BinaryInfC{$\alpha\Rightarrow\beta,\Gamma:\gamma$}\DisplayProof,\qquad (ii)\AxiomC{$\Gamma:\alpha\Rightarrow\beta$}\UnaryInfC{$\Gamma,\alpha:\beta$}\DisplayProof.
\end{teorem}
\begin{teorem}[\cite{MR972257} 3.4.2]\label{t:universalizationontheright}
\AxiomC{$\Gamma:\alpha$}\UnaryInfC{$\Gamma:\forall x\alpha$}\DisplayProof (provided either (i) $x$ is not free in $\Gamma$ or (ii) $x$ not free in $\alpha$).
\end{teorem}
\begin{teorem}[\cite{MR972257} 3.4.3]\label{t:samesetsamefourmulas}
\AxiomC{$\Gamma:\{x:\alpha\}=\{x:\beta\}$}\UnaryInfC{$\Gamma:\alpha\Leftrightarrow\beta$}\DisplayProof (with $x$ free in $\alpha$ or $\beta$).
\end{teorem}
\begin{teorem}[\cite{MR972257} 3.4.4]\label{t:universalelimination}
$\forall x\alpha\vdash\alpha$ (with $x$ free in $\alpha$).
\end{teorem}
\begin{teorem}[\cite{MR972257} 3.7.1]\label{t:existentialintroduction}
$\alpha\vdash\exists x\alpha$ (with $x$ free in $\alpha$).
\end{teorem}
\begin{teorem}[\cite{MR972257} 3.7.2]\label{t:existentializationontheleft}
\AxiomC{$\alpha,\Gamma:\beta$}\UnaryInfC{$\exists x\alpha,\Gamma:\beta$}\DisplayProof (provided either that (i) $x$ is not free in $\Gamma$ or $\beta$; or (ii) $x$ not free in $\alpha$).
\end{teorem}
\begin{teorem}[\cite{MR972257} 3.7.4]\label{t:existsubstiright}
\AxiomC{$\Gamma:\alpha(x/\tau)$}\UnaryInfC{$\Gamma:\exists x\alpha$}\DisplayProof (provided $\tau$ is free for $x$ in $\alpha$, $x$ free in $\alpha$ and every variable of $\tau$ free in $\Gamma$ or $\exists x\alpha$).
\end{teorem}
\begin{teorem}[\cite{MR972257} 3.7.6]\label{t:slidingexistence}
$\vdash\exists x(\alpha\wedge\beta)\Leftrightarrow\alpha\wedge\exists x\beta$ (with $x$ is free in $\beta$ but not in $\alpha$).
\end{teorem}
\begin{teorem}[\cite{MR972257} 3.7.8]\label{t:unrestrictedcut}
\AxiomC{$\Gamma:\alpha$}
\AxiomC{$\alpha,\Gamma:\beta$}
\BinaryInfC{$\Gamma:\beta$}
\DisplayProof (provided there is a closed term of type $\typea$ whenever $\typea$ is a type of a free variable of $\alpha$ not free in $\Gamma$ or $\beta$).
\end{teorem}

\subsection{Local set theories}

A {\em local set theory} (LST) in $\llocal$ is a collection S of sequents which is closed under derivability: $\Gamma\vdash_S\alpha$ if and only if $(\Gamma:\alpha)$ is in $S$.  Clearly, any collection of sequents generates an LST.  A local set theory is said to be consistent if it is not the case that $\vdash_S false$.

Closed terms of power type are called $\llocal$-sets. Capital letters $A,B,\ldots,Z$ will be used to denote sets and the usual abbrevations will be used: 
$\forall x\in X.\alpha$ for $\forall x(x\in X\Rightarrow\alpha)$; $\exists x\in X.\alpha$  for  $\exists x(x\in X\wedge\alpha)$; $\exists!x\in X.\alpha$  for  $\exists!x(x\in X\wedge\alpha)$; and $\{x\in X:\alpha\}$ for  $\{x:x\in X\wedge\alpha\}$.

The expected set operations $\subseteq, \cap, \cup\ldots$ can be defined in the usual way for sets of the same type. For each type $\typea$ there is a universal set $\{x_\typea:true\}$ (resp. empty set $\{x_\typea:false\}$) denoted by $U_\typea$ or $A$ (resp. $\varnothing_\typea$ or $\varnothing$).  For arbitrary sets, products and disjoint unions can be defined in the usual way. Also, function spaces $\{u:u\subseteq Y\times X\wedge\forall y\in Y\exists!x\in X.\langle y,x\rangle\in u\}$ are denoted by $X^Y$. Lastly, the familiar  $\{\tau\}$ and $\{\tau:\alpha\}$ symbolize  $\{x:x=\tau\}$ and  $\{z:\exists x_1\cdots\exists x_n(z=\tau\wedge\alpha)\}$, respectively. 

An $S$-set is an equivalence class of $\llocal$-sets under the relation $\textstyle\sim_S$, given by $X\sim_S Y$ if and only if $\vdash_SX=Y$.  Representative $\llocal$-sets will sometimes be regarded as $S$-sets. 

An $S$-function $X\overset{f}{\rightarrow}Y$ is a triplet of $S$-sets $(\lvert f\rvert,X,Y)$ such that $\vdash_S\lvert f\rvert\in Y^X$, where $\lvert f\rvert$ is called the graph of $f$ as per usual. 
\begin{proposicion}\label{P:functionimpliesy=z}
Let $X,Y,\lvert f\rvert$ be $S$-sets with $\vdash_S\lvert f\rvert\subseteq X\times Y$. Then, $x\in X\vdash_S\exists!y\in Y(\langle x,y\rangle\in\lvert f\rvert)$ if and only if $x\in X\vdash_S\exists y\in Y(\langle x,y\rangle\in\lvert f\rvert)$ and $\langle x,y\rangle\in\lvert f\rvert,\langle x,y'\rangle\in\lvert f\rvert\vdash_Sy=y'$.
\end{proposicion}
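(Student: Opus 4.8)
The plan is to unwind the abbreviations $\exists!x\alpha := \exists x(\alpha\wedge\forall y(\alpha(x/y)\Rightarrow x=y))$ and prove the two directions separately, treating the hypothesis $\vdash_S\lvert f\rvert\subseteq X\times Y$ as a standing assumption that lets us convert $\langle x,y\rangle\in\lvert f\rvert$ into $x\in X\wedge y\in Y$ when needed. Throughout, $x$, $y$, $y'$ denote distinct variables of the appropriate types, and I will use \ref{t:slidingexistence} to push the ``$x\in X$'' conjunct in and out of existentials, \ref{t:existentializationontheleft} and \ref{t:existentialintroduction} to manipulate existentials, and \ref{t:universalelimination}/\ref{t:universalizationontheright} together with \ref{t:implicationinhypotheses} for the universal-uniqueness clause. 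The abbreviations $\exists y\in Y.\beta$ for $\exists y(y\in Y\wedge\beta)$ and $\exists!y\in Y.\beta$ for $\exists!y(y\in Y\wedge\beta)$ should be expanded at the outset so that everything lives in the primitive language.

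For the forward direction, assume $x\in X\vdash_S\exists!y\in Y(\langle x,y\rangle\in\lvert f\rvert)$. Expanding the unique-existence abbreviation, the right-hand side is $\exists y\big((y\in Y\wedge\langle x,y\rangle\in\lvert f\rvert)\wedge\forall y'((y'\in Y\wedge\langle x,y'\rangle\in\lvert f\rvert)\Rightarrow y=y')\big)$. Dropping the second conjunct via the intuitionistic rule $\gamma\wedge\delta\vdash\gamma$ (available from the list of logical laws, or directly from \ref{t:implicationinhypotheses}), one obtains $x\in X\vdash_S\exists y(y\in Y\wedge\langle x,y\rangle\in\lvert f\rvert)$, which is the first desired consequence. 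For the uniqueness sequent, start from the tautology and use \ref{t:universalelimination} to extract, under the hypothesis $x\in X$ together with the existential witness, the instance $(y'\in Y\wedge\langle x,y'\rangle\in\lvert f\rvert)\Rightarrow y=y'$; then feed $\langle x,y\rangle\in\lvert f\rvert$ and $\langle x,y'\rangle\in\lvert f\rvert$ as hypotheses. The mild subtlety is that one must first recover $x\in X$, $y\in Y$, $y'\in Y$ from $\langle x,y\rangle\in\lvert f\rvert$ and $\langle x,y'\rangle\in\lvert f\rvert$ using $\vdash_S\lvert f\rvert\subseteq X\times Y$ and the Products axioms, after which \ref{t:existentializationontheleft} absorbs the existential quantifier on the witness and a cut against the first consequence discharges it; the free-variable side conditions hold because $x$, $y$, $y'$ all appear in the conclusion $y=y'$ or in $\lvert f\rvert$.

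For the reverse direction, assume $x\in X\vdash_S\exists y(y\in Y\wedge\langle x,y\rangle\in\lvert f\rvert)$ and $\langle x,y\rangle\in\lvert f\rvert,\langle x,y'\rangle\in\lvert f\rvert\vdash_Sy=y'$. From the second sequent, using $\vdash_S\lvert f\rvert\subseteq X\times Y$ to supply $y'\in Y$ as needed and then \ref{t:implicationinhypotheses}(i) plus \ref{t:universalizationontheright}, derive $\langle x,y\rangle\in\lvert f\rvert\vdash_S\forall y'((y'\in Y\wedge\langle x,y'\rangle\in\lvert f\rvert)\Rightarrow y=y')$; the proviso for \ref{t:universalizationontheright} is met since $y'$ is not free in $\langle x,y\rangle\in\lvert f\rvert$. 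Conjoining this with $y\in Y\wedge\langle x,y\rangle\in\lvert f\rvert$ and applying \ref{t:existentialintroduction} yields, under the hypothesis $y\in Y\wedge\langle x,y\rangle\in\lvert f\rvert$, the full formula $\exists!y\in Y(\langle x,y\rangle\in\lvert f\rvert)$. Finally, \ref{t:existentializationontheleft} turns the hypothesis into $\exists y(y\in Y\wedge\langle x,y\rangle\in\lvert f\rvert)$, and cutting against the first assumed sequent (with $x\in X$ in the antecedent) delivers $x\in X\vdash_S\exists!y\in Y(\langle x,y\rangle\in\lvert f\rvert)$.

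The main obstacle I anticipate is purely bookkeeping: keeping the free-variable side conditions of Cut, Substitution, \ref{t:universalizationontheright}, and \ref{t:existentializationontheleft} satisfied while shuttling the conjuncts $x\in X$, $y\in Y$, $y'\in Y$ in and out of the scope of quantifiers, and making sure that the renaming of the bound variable in $\exists!$ (the paper's definition uses a fresh $y$) is compatible with our choice of $y'$ as the witness-comparison variable. There is no deep idea here — it is the standard equivalence ``unique existence $=$ existence $+$ at most one'' — but because local set theories are formulated with sequents and a restricted Cut, one has to be a little careful that every Cut used is legitimate; where a free variable of the cut formula fails the side condition, \ref{t:unrestrictedcut} is not available without a Nullstellensatz, so the argument should be arranged so that the cut formulae's free variables ($x$, $y$) always already occur in $\Gamma$ or in the conclusion.
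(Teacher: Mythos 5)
Your argument is correct and is exactly the routine unfolding of the $\exists!$ abbreviation that the paper has in mind — its own proof is simply the word ``Straightforward.'' The only nit is that introducing an implication on the right is justified by the Equivalence rule (Bell's derived implication-introduction), not by \ref{t:implicationinhypotheses}(i), but that is a citation detail, not a gap.
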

\begin{proof}
Straightforward.
\end{proof}
For $S$-functions $\xymatrix{X\ar[r]^f & Y\ar[r]^g & Z}$ the composition is defined through its graph. Namely,
\begin{equation}\label{E:compfun}
\lvert g\circ f\rvert=\{\langle x,z\rangle:\exists y(\langle x,y\rangle\in f\wedge\langle y,z\rangle\in g)\}.
\end{equation}

A consequence of extensionality is that  for $S$-functions $X\overset{f}{\rightarrow}Y,X\overset{g}{\rightarrow}Y$, 
\begin{equation}\label{E:equalfunctions}
f\text{ is equal to }g\text{ if and only if }x\in X\vdash_S\langle x,y\rangle\in\lvert f\rvert\Leftrightarrow\langle x,y\rangle\in\lvert g\rvert.
\end{equation}

It can be verified that the collection of $S$-sets and $S$-functions forms a category $\mathscr{T}(S)$ that is also a topos, the {\em linguistic topos} of $S$. 

\subsection{Representability}\label{SS:termsinducefunctions} Let $X$ and $Y$ be $S$-sets and $\tau$ be a term for which 
\[
\langle x_1,\ldots,x_n\rangle\in X\vdash_S\tau\in Y,
\]
with all the free variables of $\tau$ contained within the $x_1,\ldots,x_n$.  It is immediate to see that $\{\langle\langle x_1,\ldots,x_n\rangle,\tau\rangle:\langle x_1,\ldots,x_n\rangle\in X\}$ is the graph of a function which will be denoted by $(\langle x_1,\ldots,x_n\rangle\mapsto\tau)$.

Not every $S$-function is representable in this way.   However, for an $S$-function $f:X\rightarrow\subob$, let\footnote{Bell uses the symbol $f^*$ but here it is reserved for the parameterizations of Subsection \ref{SS:param}.} 
\begin{equation}\label{E:omegadirectimage}
f^\natural(x):=\langle x,true\rangle\in\lvert f\rvert.
\end{equation}
Then,
\begin{align}
x\in X &\vdash\langle x,f^\natural(x)\rangle\in\lvert f\rvert\notag,
\end{align}
and therefore 
\begin{proposicion}[\cite{MR972257}, 3.16.4]\label{P:omegafunctions}
\

\begin{enumerate}[(i)]
 \item $\xymatrix{X\ar[rr]^(.48){x\mapsto f^\natural(x)} & & \mathit{\Omega}}=f$,
 \item $\vdash_S(\langle x_1,\ldots,x_n\rangle\mapsto\alpha)^\natural(\langle x_1,\ldots,x_n\rangle)\Leftrightarrow\alpha$.
\end{enumerate}
\end{proposicion}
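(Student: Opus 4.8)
The plan is to derive (i) from the extensionality criterion for equality of $S$-functions, \eqref{E:equalfunctions}, using the displayed fact $x\in X\vdash_S\langle x,f^\natural(x)\rangle\in\lvert f\rvert$ recorded just above the statement together with the uniqueness clause of Proposition~\ref{P:functionimpliesy=z}; then (ii) falls out of (i) by a short argument specific to functions valued in $\subob$. Throughout, $x$ abbreviates the tuple $\langle x_1,\ldots,x_n\rangle$ where needed.

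For (i): put $g:=(x\mapsto f^\natural(x))\colon X\to\subob$, so that $\lvert g\rvert=\{\langle x,f^\natural(x)\rangle:x\in X\}$, and observe that $f$ and $g$ have the same domain and the same codomain $U_\subob$. By \eqref{E:equalfunctions} it suffices to derive $x\in X:\langle x,y\rangle\in\lvert f\rvert\Leftrightarrow\langle x,y\rangle\in\lvert g\rvert$ with $y$ a fresh variable of type $\subob$. For the implication $\Rightarrow$: from $x\in X$ and $\langle x,y\rangle\in\lvert f\rvert$, together with $\langle x,f^\natural(x)\rangle\in\lvert f\rvert$, the uniqueness half of Proposition~\ref{P:functionimpliesy=z} gives $y=f^\natural(x)$; hence $\langle x,y\rangle=\langle x,f^\natural(x)\rangle$, and Comprehension (unfolding the set defining $\lvert g\rvert$) followed by $\exists$-introduction, Theorem~\ref{t:existsubstiright}, yields $\langle x,y\rangle\in\lvert g\rvert$. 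For the implication $\Leftarrow$: from $\langle x,y\rangle\in\lvert g\rvert$, Comprehension and $\exists$-elimination, Theorem~\ref{t:existentializationontheleft}, supply a variable $x'$ with $x'\in X$ and $\langle x,y\rangle=\langle x',f^\natural(x')\rangle$; the Products and Equality axioms force $x=x'$ and $y=f^\natural(x)$, and substituting into $x\in X:\langle x,f^\natural(x)\rangle\in\lvert f\rvert$ returns $\langle x,y\rangle\in\lvert f\rvert$. The side conditions on Cut and Substitution are met because $y$ and $x'$ are chosen fresh.

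For (ii): apply (i) to $f=(\langle x_1,\ldots,x_n\rangle\mapsto\alpha)$, taken on the universal set $X$ of the relevant product type; it says $(x\mapsto f^\natural(x))=f$, i.e.\ that $f^\natural(\langle x_1,\ldots,x_n\rangle)$ and $\alpha$ represent the same $S$-function into $\subob$. A function into $\subob$ determines its representing term up to provable equivalence: reading \eqref{E:equalfunctions} from right to left and unwinding the two singleton-graph memberships gives $(y\Leftrightarrow f^\natural(\langle x_1,\ldots,x_n\rangle))\Leftrightarrow(y\Leftrightarrow\alpha)$, and specializing $y$ to $true$ by Substitution, together with the elementary equivalence $\vdash_S(true\Leftrightarrow\beta)\Leftrightarrow\beta$, collapses this to $\vdash_S(\langle x_1,\ldots,x_n\rangle\mapsto\alpha)^\natural(\langle x_1,\ldots,x_n\rangle)\Leftrightarrow\alpha$. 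Alternatively one can unfold $(\langle x_1,\ldots,x_n\rangle\mapsto\alpha)^\natural(\langle x_1,\ldots,x_n\rangle)$ directly: Comprehension turns it into $\exists x'(\langle\langle x_1,\ldots,x_n\rangle,true\rangle=\langle\langle x'\rangle,\alpha(x')\rangle)$, the Products and Equality axioms reduce the tuple-equality to $\exists x'(x=x'\wedge(true\Leftrightarrow\alpha(x')))$, and a one-point-rule step gives $true\Leftrightarrow\alpha$, hence $\alpha$. (Over a non-universal domain the correct reading of (ii) carries the extra hypothesis $\langle x_1,\ldots,x_n\rangle\in X$.)

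The argument has no real depth: the only genuine content is the appeal to Proposition~\ref{P:functionimpliesy=z}, namely that $\lvert f\rvert$ being a function graph forces its second coordinate, which is exactly what makes the ``evaluate and pair'' term $f^\natural$ reconstruct $f$. The main obstacle is clerical rather than conceptual — threading the tupling isomorphisms, unwinding the Comprehension and $\exists$ through the graph of $(x\mapsto f^\natural(x))$, and honoring the freshness conditions of Cut, Substitution and the quantifier rules — plus ensuring the one-point rule and $\vdash_S(true\Leftrightarrow\beta)\Leftrightarrow\beta$ are invoked in their intuitionistically valid forms.
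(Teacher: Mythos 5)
Your argument is correct, and it is essentially the intended one: the paper does not prove this proposition itself but recalls it as Bell's 3.16.4, presenting it as following ("therefore") from the displayed fact $x\in X\vdash_S\langle x,f^\natural(x)\rangle\in\lvert f\rvert$, which is exactly how you proceed — uniqueness via Proposition \ref{P:functionimpliesy=z} together with \eqref{E:equalfunctions} for (i), and a direct comprehension unfolding (correctly read over the universal domain) for (ii). Only clerical care is needed where you write a single witness $x'$ instead of the tuple $x_1',\ldots,x_n'$ and omit the $\bm{x}'\in X$ conjunct when unfolding the graph, but these do not affect the argument.
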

 An LST is said to be {\em well-termed} if whenever $\vdash_S\exists!x\alpha$, there is a term $\tau$ such that its free variables are that of  $\alpha$ except $x$ and $\vdash_S\alpha(x/\tau)$; an LST for which every syntactic set is isomorphic to a universal is said to be {\em well-typed}.  For a given LST $S$, let $\mathbf{C}^\ast(S)$ be the subcategory of $S$-sets of the form $U_\typea$ and $S$-functions of the form $(x\mapsto\tau)$ with $\tau$ a term. It follows that $S$ is well-termed if and only if $\mathbf{C}^\ast(S)\hookrightarrow\topos(S)$ is full; and well-typed if and only it is dense. 

\subsection{Interpretations} An {\em interpretation} of a local language $\llocal$  is a pair $(\E,I)$, with $\E$ a topos and $I$ an assignment to each type symbol $\mathbf{\Xi}$ of an object  $\mathbf{\Xi}_I=\Xi$ in $\E$ in a way compatible with products and powers that further assigns to $\mathbf{\Omega}$ a subobject classifier $\mathbf{\Omega}_I=\subob$; to $\uno$ a terminal object $\uno_I=1$; and to a function symbol $\typef$ of signature $\typea\rightarrow\mathbf{B}$ an arrow in $\E$ of the form $\typef_I:\typea_I\to\mathbf{B}_I$.

Given any interpretation, terms are further interpreted as arrows in the following way. Let $\tau$ be a term of type $\mathbf{B}$ and let $\bm{x}=x_1,\ldots,x_n$  be different variables of types $\typea_1,\ldots,\typea_n$ including all free variables of $\tau$. Define 
\[\llbracket\tau\rrbracket_{I,x_1,\ldots,x_n}=\llbracket\tau\rrbracket_{I,\bm{x}}=\llbracket\tau\rrbracket_{\bm{x}}\]
as an arrow $A_1\times\cdots\times A_n\to B$ recursively by requiring that:  (i) $\llbracket\ast\rrbracket_{\bm{x}}$ be the unique $A_1\times\cdots\times A_n\rightarrow 1$; (ii) $\llbracket x_i\rrbracket_{\bm{x}}=\pi_i$; (iii) $\llbracket\typef(\tau)\rrbracket_{\bm{x}}=\typef_I\circ\llbracket\tau\rrbracket_{\bm{x}}$; (iv) $\llbracket\langle\tau_1,\ldots,\tau_n\rangle\rrbracket_{\bm{x}}=\langle\llbracket\tau_1\rrbracket_{\bm{x}},\ldots,\llbracket\tau_n\rrbracket_{\bm{x}}\rangle$; (v) $\llbracket(\tau)_i\rrbracket_{\bm{x}}=\pi_i\circ\llbracket\tau\rrbracket_{\bm{x}}$; (vi) $\llbracket\{y:\alpha\}\rrbracket_{\bm{x}}=(\llbracket\alpha(y/u)\rrbracket_{u,\bm{x}}\circ\mathrm{can})\hat{}$,
 where $u$ is of type $\mathbf{C}$, different from $x_1,\ldots,x_n$, free for $y$ in $\alpha$, $y$ is of type $\mathbf{C}$, and `$\mathrm{can}$' is the canonical isomorphism $C\times(A_1\times\cdots\times A_n)\cong C\times A_1\times\cdots\times A_n$ (cf. \eqref{E:transposepower}); (vii) $\llbracket\sigma=\tau\rrbracket_{\bm{x}}=\eq_C\circ\llbracket\langle\sigma,\tau\rangle\rrbracket_{\bm{x}}$, where $\eq_C=\chi(\delta_C)$ and $\delta_C$ is the diagonal in $C\times C$; and (viii) $\llbracket\sigma\in\tau\rrbracket_{\bm{x}}=e_C\circ\llbracket\langle\sigma,\tau\rangle\rrbracket_{\bm{x}}$ (cf. \eqref{E:transposepower}).

In particular, if $\tau$ is a term all of whose variables are among $z_1,\ldots,z_m$ and $\sigma_1,\ldots,\sigma_m$ are terms such that $\sigma_i$ is free for $z_i$ in $\tau$ for each $i=1,\ldots,m$; then, for each interpretation of $\llocal$ in a topos $\E$, 
\begin{equation}\label{E:subst}
\llbracket\tau(\bm{z}/\bm{\sigma})\rrbracket_{\bm{x}}=\llbracket\tau\rrbracket_{\bm{z}}\circ\llbracket\langle\sigma_1,\ldots,\sigma_m\rangle\rrbracket_{\bm{x}}
\end{equation}
where $\bm{x}$ includes all the free variables of $\sigma_1,\ldots,\sigma_m$.

The interpretation of a finite collection $\Gamma$ of formulae, $\llbracket\Gamma\rrbracket_{I,\bm{x}}$ is the interpretation of its conjunction. So that if $\beta$ is another formula and $\bm{x}=x_1,\ldots,x_n$ is the list of the free variables of $\Gamma\cup\{\beta\}$, then write
\begin{equation}\label{E:validsequent}
\Gamma\vDash_I\beta\quad\text{or}\quad\Gamma\vDash_\E\beta\quad\text{instead of}\quad\llbracket\Gamma\rrbracket_{I,\bm{x}}\leq\llbracket\beta\rrbracket_{I,\bm{x}}.
\end{equation} 
A sequent $\Gamma:\beta$ is {\em valid} under $I$ (or in $\E$) if $\Gamma\vDash_I\beta$. It is clear that $\vDash_I\beta$ if and only if $\llbracket\beta\rrbracket_{\bm{x}}=T$. From which if $\E$ is degenerate then $\vDash_I\alpha$ for every formula $\alpha$ in $\llocal$. Thus it can be easily verified that

\begin{teorem}[\cite{MR972257}, 3.23.2]\label{P:validityTrelation}
$\Gamma\vDash_I\alpha$ if and only if $\llbracket\alpha\rrbracket_{\bm{x}}\circ\overbar{\llbracket\Gamma\rrbracket_{\bm{x}}}=T$.
\end{teorem}

\begin{teorem}[\cite{MR972257}, 3.23.3]\label{P:equalityofsymbols}
$\Gamma\vDash_I\sigma=\tau$ if and only if $\llbracket\sigma\rrbracket_{\bm{x}}\circ\overbar{\llbracket\Gamma\rrbracket_{\bm{x}}}=\llbracket\tau\rrbracket_{\bm{x}}\circ\overbar{\llbracket\Gamma\rrbracket_{\bm{x}}}$.
\end{teorem}

Given a local set theory $S$, the canonical interpretation $(\topos(S), K_S)$ of its local language is the one that assigns  $\typea_{K_S} =U_\typea$ for each type symbol $\typea\notag$ and  $\xymatrix{\typef_{K_S} =U_\typea\ar[rr]^(.6){x\mapsto\typef(x)} && U_\mathbf{B}}$  for each function symbol $\typef$ with signature $\typea\rightarrow\mathbf{B}$. It follows that
\begin{equation}\label{E:soundcompletenessforTS}
\Gamma\vDash_{K_S}\alpha\quad\text{ if and only if }\quad\Gamma\vdash_S\alpha,
\end{equation}
and that 
\begin{equation} \label{E:interpretationcorrespondstofunctions}\llbracket\tau\rrbracket_{K_S,\bm{x}}=(\bm{x}\mapsto\tau).
\end{equation}
In particular, the following expected result holds. 
\begin{lema}\label{L:Xisalphatoo}
Let $S$ be a local set theory in a local language $\llocal$. Let $\alpha$ be a formula with a unique free variable $x$ of type $\typea$.  The following is a pullback diagram.
\begin{equation}\label{E:pullbackalpha}
\xymatrix{
\{x:\alpha\}\ar[r]\ar[d]_{(x\mapsto x)} & 1\ar[d]^\top \\
A\ar[r]^{\llbracket\alpha\rrbracket_x}_{(x\mapsto\alpha)} & \subob
}
\end{equation}
\end{lema}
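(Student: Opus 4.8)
The plan is to verify the pullback by exhibiting, for an arbitrary test object and cone, a unique factorization through $\{x:\alpha\}$. Concretely, I would work inside the topos $\topos(S)$ and use the internal-language description of its objects and arrows: an arrow from an $S$-set $Z$ into $\{x:\alpha\}$ is a graph-represented $S$-function, and the subset $\{x:\alpha\}\subseteq A$ is represented by the inclusion $(x\mapsto x)$. The pullback square I must verify is that the square commutes and is universal; but since every subobject of $A$ in $\topos(S)$ is (up to iso) of the form $\{x:\alpha\}\hookrightarrow A$ with characteristic $\llbracket\alpha\rrbracket_x$, and by \eqref{E:interpretationcorrespondstofunctions} we have $\llbracket\alpha\rrbracket_{K_S,x}=(x\mapsto\alpha)$, the cleanest route is to reduce to the general fact (recalled in the Toposes subsection) that the canonical monic $\overbar{u}$ into $A$ with characteristic $u$ sits in a pullback square over $\top$, and then identify $\overbar{(x\mapsto\alpha)}$ with $(x\mapsto x):\{x:\alpha\}\to A$.

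First I would spell out the commutativity: the two composites $\{x:\alpha\}\to A\to\subob$ and $\{x:\alpha\}\to 1\to\subob$ must agree. The first is $(x\mapsto\alpha)$ restricted along the inclusion, i.e. the $S$-function whose graph, computed via \eqref{E:compfun}, is $\{\langle x,w\rangle:x\in\{x:\alpha\}\wedge w=\alpha\}$; the second is the constant $\top$, i.e. $(x\mapsto true)$ restricted to $\{x:\alpha\}$. By the Comprehension axiom $x\in\{x:\alpha\}\Leftrightarrow\alpha$ together with \eqref{E:equalfunctions} these two $S$-functions coincide, since on $\{x:\alpha\}$ we have $\alpha\vdash_S \alpha\Leftrightarrow true$. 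Next, for universality: given an $S$-set $Z$ with $S$-functions $g:Z\to A$ and $Z\to 1$ such that $\llbracket\alpha\rrbracket_x\circ g = \top\circ\,!$, I would use \eqref{E:soundcompletenessforTS} / Theorems \ref{P:validityTrelation} and \ref{P:equalityofsymbols} to translate the hypothesis into the derivability statement $\langle z,x\rangle\in\lvert g\rvert \vdash_S \alpha(x/\text{that }x)$, i.e. that $g$ lands in the extension of $\alpha$; this exactly says $\vdash_S \lvert g\rvert\in\{x:\alpha\}^{Z}$, so $g$ factors (uniquely, by \eqref{E:equalfunctions}, since $(x\mapsto x)$ is monic) through $\{x:\alpha\}$.

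Concretely the argument hinges on two small translations: (a) that the characteristic arrow of the subobject $\{x:\alpha\}\hookrightarrow A$ is indeed $(x\mapsto\alpha)$ — which is just \eqref{E:interpretationcorrespondstofunctions} applied to the term $\alpha$, plus the standard identification $\overbar{(x\mapsto\alpha)}=\{x:\alpha\}\hookrightarrow A$ following from the Comprehension axiom and the description of $\bar u$ in the Toposes subsection; and (b) that a cone over the cospan factors iff the relevant derivability holds, via soundness–completeness \eqref{E:soundcompletenessforTS}. So in fact I would prefer to present the proof as: by \eqref{E:interpretationcorrespondstofunctions}, $\llbracket\alpha\rrbracket_{x}=(x\mapsto\alpha)$; by the general topos fact the square with top $\overbar{(x\mapsto\alpha)}$, left leg $\overbar{(x\mapsto\alpha)}$ itself, bottom $(x\mapsto\alpha)$ and right $\top$ is a pullback; and $\overbar{(x\mapsto\alpha)}$ is, by Comprehension and extensionality \eqref{E:equalfunctions}, the $S$-set $\{x:\alpha\}$ with inclusion $(x\mapsto x)$.

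The main obstacle I expect is the bookkeeping in step (b) — carefully matching the semantic condition $\llbracket\alpha\rrbracket_x\circ\overbar{\llbracket\Gamma\rrbracket}=T$ from Theorem \ref{P:validityTrelation} with the syntactic "$g$ factors through $\{x:\alpha\}$" claim, keeping track of which variables are free and ensuring the conditions on the Cut and Substitution rules are met so that \eqref{E:soundcompletenessforTS} applies cleanly. Everything else (commutativity, uniqueness) is a direct consequence of Comprehension and \eqref{E:equalfunctions} and should be dispatched in a line or two each. One could alternatively bypass semantics entirely and argue purely syntactically that $\{x:\alpha\}^{Z}=\{u\in A^{Z}: \forall z\,(\langle z,x\rangle\in u\Rightarrow\alpha)\}$, but invoking the already-established canonical-interpretation machinery makes the proof shortest.
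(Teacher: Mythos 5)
Your proposal is correct, and since the paper's own proof of this lemma is simply the word ``Straightforward,'' what you have written is exactly the routine verification being alluded to: either the direct check of the universal property via \eqref{E:equalfunctions} and Comprehension, or the cleaner reduction to the classifying-square fact together with the identification of $\overbar{(x\mapsto\alpha)}$ with $(x\mapsto x):\{x:\alpha\}\to A$, which is most quickly justified by \ref{P:omegafunctions}(ii) applied to $(x\mapsto\alpha)^\natural$. No gap to report; your second route is the shorter write-up and is fully supported by the results recalled in the Preliminaries.
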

\begin{proof} Straightforward.
\end{proof}
\begin{observacion}\label{O:ix} The map $(x\mapsto x):X\longrightarrow A$ will frequently be denoted by  $i_X$.
\end{observacion}

\subsection{Internal language and local set theory of a topos} Let $\E$ be a topos with specified terminal $1_\E$, subobject classifier $\subob_\E$, products and power objects. The internal language of $\E$ is the local language $\llocal(\E)$ whose type symbols are precisely the objects $X$ of $\E$, denoted by $\mathbf X$, and whose function symbols the arrows $f$ of $\E$,  denoted by $\mathbf f$, with signature $\mathbf{dom(f)\to cod(f)}$.  The obvious natural interpretation $(\E, H_\E)$ of $\llocal(\E)$ is given by $\mathbf{X}_{H_\E}=X$ and $\typef_{H_\E}=f$. Equality, membership and comprehension in $\llocal(\E)$ are written in bold; i.e. $\bm{=},\bm{\in},\bm{\{\ :\ \}}$.

The local set theory of a topos, $\mathrm{Th}(\E)$, is determined by the requirement that  $\Gamma:\alpha$ is in $\mathrm{Th}(\E)$ if $\Gamma\vDash_{H_\E}\alpha$. It follows that
\begin{equation}\label{E:TeosiiE}
\Gamma\vdash_{\mathrm{Th}(\E)}\alpha\quad\text{ if and only if }\quad\Gamma\vDash_{H_\E}\alpha.
\end{equation}
This interpretation is compatible with the intuition, e.g. for any $\E$-arrow $f$, 
\begin{equation}\label{E:monicsinTeoE}
f\text{ is monic if and only if }\typef(x)\bm{=}\typef(y)\vdash_{\mathrm{Th}(\E)}x\bm{=}y.
\end{equation}

As a consequence of \eqref{E:subst},  \ref{P:validityTrelation}, and \eqref{E:relofmaximal} explicit term representation can be described categorically as follows.  

\begin{teorem}[\cite{MR972257} 3.34]\label{L:substintoposE}
The following are equivalent:
\begin{enumerate}[(i)]
 \item $\vdash_{\mathrm{Th}(\E)}\alpha(y_1/\tau_1,\ldots,y_n/\tau_n)$,
 \item $\overbar{\llbracket\alpha\rrbracket_{H_\E,\bm{y}}}\circ f=\llbracket\langle\tau_1,\ldots,\tau_n\rangle\rrbracket_{H_\E, \bm{x}}$ for some arrow $f$, where $y_1,\ldots,y_n$ are the free variables of $\alpha$ and $\tau_i$ is free for $y_i$ in $\alpha$ for each $i$.
\end{enumerate}
\end{teorem}
\begin{teorem}[\cite{MR972257} 3.35]\label{L:substintoposEbis} If $m$ is monic in $\E$, then
\[
\vdash_{\teo(\E)}\bm{\chi(m)}(x)\Leftrightarrow\exists y.x\bm{=m}(y).
\]

\end{teorem}

$\mathrm{Th}(\E)$ is both well-termed and well-typed as the following two results show.
\begin{teorema}[\cite{MR972257} 3.36]\label{T:eliminabilityinTeo}
If $\vdash_{\mathrm{Th}(\E)}\exists!y\alpha$, then there exists a unique $\E$-arrow $f$ such that
\[\vdash_{\mathrm{Th}(\E)}\alpha(y/\typef(\langle x_1,\ldots,x_n\rangle)),\]
where $x_1,\ldots,x_n,y$ are the free variables of $\alpha$.
\end{teorema}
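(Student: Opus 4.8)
First, I would work inside the canonical interpretation $(\E,H_\E)$. Let $A_1,\dots,A_n,Y$ be the objects interpreting the types of $x_1,\dots,x_n,y$, put $B=A_1\times\cdots\times A_n$, arrange the variable list so that $y$ comes last, and set $u:=\llbracket\alpha\rrbracket_{H_\E,x_1,\dots,x_n,y}\colon B\times Y\to\subob$. Let $m:=\overbar u\colon D\hookrightarrow B\times Y$ be the monic representing the subobject of $B\times Y$ corresponding to $u$, and write $m=\langle p,q\rangle$ with $p:=\pi_B\circ m\colon D\to B$ and $q:=\pi_Y\circ m\colon D\to Y$. The whole argument turns on one claim: that $\vdash_{\teo(\E)}\exists!y\,\alpha$ forces $p$ to be an isomorphism. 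Granting that, I would set $f:=q\circ p^{-1}\colon B\to Y$ and take $\typef$ to be the function symbol of $\llocal(\E)$ that names $f$.

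To prove the claim I would unfold $\exists!y\,\alpha$ as $\exists y\bigl(\alpha\wedge\forall y'(\alpha(y/y')\Rightarrow y\bm{=}y')\bigr)$ and treat its two halves separately. Discarding the uniqueness conjunct gives $\vdash_{\teo(\E)}\exists y\,\alpha$; by \eqref{E:TeosiiE} this means $\llbracket\exists y\,\alpha\rrbracket_{\bm{x}}=T_B$, i.e.\ the subobject of $B$ attached to $\exists y\,\alpha$ is all of $B$, and since that subobject is the image of $p$ (the categorical meaning of $\exists$), the map $p$ is a cover. From the uniqueness conjunct I would extract, by the customary intuitionistic handling of unique-description formulae (existential elimination against a fresh witness, \ref{t:existentializationontheleft}), the derivable sequent $\alpha,\alpha(y/y')\vdash_{\teo(\E)}y\bm{=}y'$ over the free variables $x_1,\dots,x_n,y,y'$. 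Interpreting it through \ref{P:validityTrelation} and the substitution property \eqref{E:subst}, the subobject attached to $\alpha\wedge\alpha(y/y')$ is the fibre product $D\times_B D$ of $p$ with itself, embedded into $B\times Y\times Y$ by $(d_1,d_2)\mapsto(pd_1,qd_1,qd_2)$, and by \ref{P:equalityofsymbols} the validity of $y\bm{=}y'$ there says precisely that the two composites $D\times_B D\rightrightarrows D\xrightarrow{\,q\,}Y$ coincide. Since the two composites $D\times_B D\rightrightarrows D\xrightarrow{\,p\,}B$ coincide by the very definition of the fibre product, $m=\langle p,q\rangle$ identifies the two projections, whence by monicity of $m$ the projections themselves coincide; that is exactly that $p$ is a monomorphism. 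A monic cover in a topos is an isomorphism, so the claim holds.

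Finally I would verify the two conclusions. Since $\typef_{H_\E}=f$, rules (ii)--(iv) of the interpretation together with \eqref{E:subst} give $\llbracket\langle x_1,\dots,x_n,\typef(\langle x_1,\dots,x_n\rangle)\rangle\rrbracket_{H_\E,x_1,\dots,x_n}=\langle\mathrm{id}_B,f\rangle\colon B\to B\times Y$. By \ref{L:substintoposE}, $\vdash_{\teo(\E)}\alpha\bigl(y/\typef(\langle x_1,\dots,x_n\rangle)\bigr)$ is equivalent to the existence of an arrow $g$ with $m\circ g=\langle\mathrm{id}_B,f\rangle$, and $g:=p^{-1}$ works because $m\circ p^{-1}=\langle p,q\rangle\circ p^{-1}=\langle\mathrm{id}_B,q\circ p^{-1}\rangle=\langle\mathrm{id}_B,f\rangle$. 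For uniqueness, if $\mathbf{g}\colon B\to Y$ is any arrow with $\vdash_{\teo(\E)}\alpha\bigl(y/\mathbf{g}(\langle x_1,\dots,x_n\rangle)\bigr)$, then \ref{L:substintoposE} again supplies $h\colon B\to D$ with $m\circ h=\langle\mathrm{id}_B,\mathbf{g}\rangle$; postcomposing with $\pi_B$ yields $p\circ h=\mathrm{id}_B$, hence $h=p^{-1}$ as $p$ is invertible, and then $\mathbf{g}=\pi_Y\circ m\circ h=q\circ p^{-1}=f$. The step I expect to be the real obstacle is the middle one: wringing from the single hypothesis $\exists!y\,\alpha$ the two separate categorical facts that $p$ is a cover and that $p$ is a monomorphism, which is what forces the detour through the semantics of $\exists$ and through the fibre product $D\times_B D$ for the uniqueness subformula. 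Once $p$ is known to be an isomorphism, everything left is a short diagram chase against \ref{L:substintoposE}.
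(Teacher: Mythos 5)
Your proof is correct. The paper itself offers no proof of this statement---it is recalled directly from Bell (3.36)---and your argument (take the graph subobject $m=\langle p,q\rangle=\overbar{\llbracket\alpha\rrbracket}$, extract from the uniqueness conjunct that the kernel-pair projections of $p$ agree so $p$ is monic, from the existence conjunct that the image of $p$ is all of $B$ so $p$ is a cover, conclude $p$ is iso and set $f=q\circ p^{-1}$, then verify existence and uniqueness against \ref{L:substintoposE}) is essentially the standard argument underlying Bell's treatment, the only external input being the image interpretation of $\exists$, which is indeed part of Bell's Chapter 3 semantics even though it is not among the facts recalled in this paper's preliminaries.
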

Finally, passing from $\E$ to the linguistic topos of $\mathrm{Th}(\E)$ gives no additional information. Indeed, the following equivalence theorem holds. 
\begin{teorema}[\cite{MR972257} 3.37]\label{T:equivalencetheorem}
Every topos $\E$ is equivalent to the linguistic topos $\topos(\mathrm{Th}(\E))$  of its local set theory. 
\end{teorema}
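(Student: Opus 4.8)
To establish the stated equivalence, the plan is to show that the canonical interpretation $K_{\teo(\E)}$ of the internal language $\llocal(\E)$ induces an equivalence
\[
G\colon\E\longrightarrow\topos(\teo(\E)),\qquad X\longmapsto U_{\mathbf X},\qquad (f\colon X\to Y)\longmapsto\mathbf f_{K_{\teo(\E)}}=(x\mapsto\mathbf f(x)).
\]
Functoriality is routine: the sequents $\vdash_{\teo(\E)}\mathbf{1_X}(x)\bm= x$ and $\vdash_{\teo(\E)}\mathbf{(g\circ f)}(x)\bm=\mathbf g(\mathbf f(x))$ are, by \eqref{E:TeosiiE} and \ref{P:equalityofsymbols}, just the trivially true equalities $1_X\circ 1_X=1_X$ and $(g\circ f)\circ 1_X=g\circ f\circ 1_X$ in $\E$, so $G$ preserves identities and composites. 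It then suffices to verify that $G$ is faithful, full, and essentially surjective.

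Faithfulness I would get from completeness: if $G(f)=G(g)$ for $f,g\colon X\to Y$, then by \eqref{E:equalfunctions} together with the description of graphs of representable functions in \ref{SS:termsinducefunctions} one obtains $\vdash_{\teo(\E)}\mathbf f(x)\bm=\mathbf g(x)$; by \eqref{E:TeosiiE} this means $\vDash_{H_\E}\mathbf f(x)\bm=\mathbf g(x)$, and \ref{P:equalityofsymbols} with empty context rewrites this as $\llbracket\mathbf f(x)\rrbracket_{H_\E,x}=\llbracket\mathbf g(x)\rrbracket_{H_\E,x}$, i.e.\ $f=g$. For fullness, given a $\teo(\E)$-function $h\colon U_{\mathbf X}\to U_{\mathbf Y}$, the formula $\alpha:=(\langle x,y\rangle\bm\in|h|)$ satisfies $\vdash_{\teo(\E)}\exists!y\,\alpha$ (unfolding $|h|\bm\in U_{\mathbf Y}^{U_{\mathbf X}}$, cf.\ \ref{P:functionimpliesy=z}), so \ref{T:eliminabilityinTeo} yields an $\E$-arrow $f\colon X\to Y$ with $\vdash_{\teo(\E)}\langle x,\mathbf f(x)\rangle\bm\in|h|$; feeding this and the uniqueness clause of ``$h$ is a function'' into extensionality gives $\vdash_{\teo(\E)}|h|\bm=|(x\mapsto\mathbf f(x))|$, that is $G(f)=h$. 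This last step is exactly the implication ``$\teo(\E)$ well-termed $\Rightarrow\mathbf{C}^\ast(\teo(\E))\hookrightarrow\topos(\teo(\E))$ full'' noted in \ref{SS:termsinducefunctions}.

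The real work is essential surjectivity, which simultaneously establishes that $\teo(\E)$ is well-typed. Let $A$ be any $\teo(\E)$-set, a closed term of some power type $\mathbf{P\Xi}$. First I would check, by induction on the structure of a type symbol, that $U_{\mathbf\Xi}\cong U_{\mathbf Z}$ in $\topos(\teo(\E))$ for $Z:=\mathbf\Xi_{H_\E}$ an object of $\E$: the ground case is trivial; for $\uno$ and $\mathbf\Omega$ both sides are, respectively, terminal objects and subobject classifiers of $\topos(\teo(\E))$ (that the universals over $1_\E$ and $\Omega_\E$, regarded as ground types, are such uses soundness \eqref{E:TeosiiE}), hence isomorphic; and the product and power cases follow because $U$ of a product (resp.\ power) type symbol is the product (resp.\ power object) in $\topos(\teo(\E))$ of the universals of its constituents, $H_\E$ is compatible with products and powers, and the $\E$-product (resp.\ $\E$-power object), re-imported as a ground type, represents the same universal property up to isomorphism. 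Fixing such an isomorphism $U_{\mathbf\Xi}\cong U_{\mathbf Z}$ and using \ref{L:Xisalphatoo}, $A=\{x_{\mathbf\Xi}:x_{\mathbf\Xi}\bm\in A\}$ is a subobject of $U_{\mathbf\Xi}$, hence corresponds to a subobject of $U_{\mathbf Z}$, whose characteristic $U_{\mathbf Z}\to U_{\mathbf\Omega}$ is, by well-termedness (\ref{T:eliminabilityinTeo}), of the form $(x_{\mathbf Z}\mapsto\beta)$ for a formula $\beta$ with no free variable other than $x_{\mathbf Z}$; thus that subobject is $\{x_{\mathbf Z}:\beta\}$. Let $m\colon W\rightarrowtail Z$ be the subobject of $Z$ in $\E$ classified by $\llbracket\beta\rrbracket_{H_\E,x_{\mathbf Z}}$. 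Since $\llbracket\bm{\chi(m)}(x_{\mathbf Z})\rrbracket_{H_\E,x_{\mathbf Z}}=\chi(m)=\llbracket\beta\rrbracket_{H_\E,x_{\mathbf Z}}$, soundness gives $\vdash_{\teo(\E)}\bm{\chi(m)}(x_{\mathbf Z})\Leftrightarrow\beta$, and combining with \ref{L:substintoposEbis} yields $\vdash_{\teo(\E)}\beta\Leftrightarrow\exists y.\,x_{\mathbf Z}\bm{=m}(y)$. Hence $\{x_{\mathbf Z}:\beta\}$ is the image of the $\teo(\E)$-function $(y\mapsto\mathbf m(y))\colon U_{\mathbf W}\to U_{\mathbf Z}$, which is monic by \eqref{E:monicsinTeoE}; a monic function has domain isomorphic to its image, so $A\cong\{x_{\mathbf Z}:\beta\}\cong U_{\mathbf W}=G(W)$, completing essential surjectivity.

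I expect the main obstacle to be this last part: the delicate point is faithfully converting a purely syntactic subobject of a universal set back into an honest $\E$-subobject --- which is precisely where \ref{L:substintoposEbis} and \eqref{E:monicsinTeoE} earn their keep --- together with the bookkeeping in the type-induction and in keeping the two interpretations $H_\E$ and $K_{\teo(\E)}$ of the shared language $\llocal(\E)$ cleanly separated throughout.
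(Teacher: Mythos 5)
Your argument runs in the opposite direction from the proof the paper leans on (Bell's 3.37, whose construction is recalled in the Appendix): you take the canonical inclusion $\iota_\E:\E\rightarrow\topos(\teo(\E))$, $X\mapsto U_{\mathbf X}$, $f\mapsto(x\mapsto\mathbf f(x))$, and prove it faithful (soundness \eqref{E:TeosiiE} plus \ref{P:equalityofsymbols}), full (via \ref{T:eliminabilityinTeo}) and essentially surjective, whereas Bell constructs the quasi-inverse $\rho_\E:\topos(\teo(\E))\rightarrow\E$ outright, sending $\bm{\{}x\bm{:}\alpha\bm{\}}$ to $\dom\overbar{\llbracket\alpha\rrbracket_{H_\E,x}}$ and an arrow to the unique $\E$-arrow determined by \eqref{E:definingGf} through \ref{T:eliminabilityinTeo}. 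The trade-off is clear: in Bell's route the composite type symbols are absorbed by the interpretation $H_\E$ itself, which by definition respects products and powers, so essential surjectivity is immediate and all the work sits in soundness/completeness; in your route that same work resurfaces as your type induction, and the one-line claim that the $\E$-product or $\E$-power object ``re-imported as a ground type represents the same universal property'' is exactly the point that requires proof---you must check, using the function symbols $\bm{\pi}_i$ and $\mathbf{e_A}$ together with soundness \eqref{E:TeosiiE} and well-termedness \ref{T:eliminabilityinTeo}, that $U_{\mathbf Z}$ for $Z=A\times B$ (resp.\ $Z=PA$) is a product (resp.\ a power object) of the corresponding universals in $\topos(\teo(\E))$, i.e.\ that $\iota_\E$ preserves these structures. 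That verification does go through with the tools you cite, so your proof closes, but it is the step your sketch under-weighs; the rest---faithfulness, fullness via \ref{T:eliminabilityinTeo}, and the passage $A\cong\{x_{\mathbf Z}:\beta\}\cong U_{\mathbf W}$ using \ref{L:Xisalphatoo}, \ref{L:substintoposEbis} and \eqref{E:monicsinTeoE}---is sound and is essentially the same soundness/completeness machinery Bell deploys on the other side of the equivalence.
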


\subsection{Translations}
A {\em translation} $\kappa:\llocal\to\Lambda$ between local languages $\llocal$ and $\Lambda$ is an assignment of a type (resp. function) symbol in $\Lambda$ to each type (resp. function) symbol in $\llocal$ in a way compatible with products, powers, truth-value and unity types. Terms are translated in the expected way as to preserve the basic axioms.  Between LSTs, $S$ and $\Sigma$ in $\llocal$ and $\Lambda$ respectively, $\kappa$ is said to be a translation if for every sequent $\Gamma:\alpha$ of $\llocal$, $\Gamma\vdash_S\alpha$ implies that $\kappa\Gamma\vdash_{\Sigma}\kappa\alpha$. If under the same conditions $\Gamma\vdash_S\alpha$ if and only if $\kappa\Gamma\vdash_{\Sigma}\kappa\alpha$, then $\kappa$ is called \textit{conservative}.

Any translation $\kappa:\llocal\to\Lambda$ induces a logical functor $\topos(\kappa)$  (one that preserves the specified terminal object, products, subobject classifier, power objects and evaluation arrows) between the associated linguistic toposes. 

For any local set theory $S$ with underlying local language $\llocal$, let the {\em canonical translation} $\eta_S$ be the assignment to each type symbol  $\mathbf{\Xi}$ of the type symbol associated to $U_\mathbf{\Xi}$  and to $\mathbf f$ of the function symbol associated to $(x\mapsto \mathbf f(x))$. It follows that it is a conservative translation and that 
\begin{equation}\label{E:canonical-natural}
\llbracket\tau\rrbracket_{K_S,\bm{x}}=\llbracket\eta_S(\tau)\rrbracket_{H_{\topos(S)},\eta_S\bm{x}}.
\end{equation}

Whenever there is no risk of confusion one can represent the type symbol associated to a type symbol $\typea$ with the same letter; in essence writing $\typea$ for $\eta_S(\typea)$. In particular, given an $S$-set $X:=\{x_\typea:\alpha\}$, $\eta_S(X)$ is described by $\{\eta_S(x_\typea):\eta_S(\alpha)\}$, and since $\eta_S$ is conservative, there is no loss in representing $\eta_S(X)=X$.   The same will be assumed for a syntactical function $f=(\lvert f\rvert,X,Y)$. However, $(x\mapsto\typef(x))$ need not be an $S$-function, since the function symbol $\typef$ need not be in $\llocal$, only in $\llocal(\topos(S))$. That is, $(x\mapsto\typef(x)):\bm{X\rightarrow Y}$ is an arrow in $\topos(\teo(\topos(S)))$, not in $\topos(S)$.

\section{Parameterizations}\label{S:Param}

The main purpose of this section is to study the general {\em change of variables} process within a local set theory. 

In subsection \ref{SS:Casogeneral}, a functor similar in spirit to the inverse image functor is considered and some of its basic properties are derived.  

In subsection \ref{SS:universalparam} the particular case of bijections from universal sets is analyzed and a complete parameterization is obtained that preserves all logical operations as well as the quantifiers (the latter with a minor natural correction term).  In particular, the results stated in \hyperlink{T:TeorB}{Theorem B} are proved herein. 
\subsection{Preimage translations}\label{SS:Casogeneral}

Let $S$ be a local set theory in a local language $\llocal$ and let $f$ be an arbitrary $S$-function from a set $X$ of type $\mathbf{PA}$ to a set $Y$ of type $\mathbf{PB}$.  Inspired by the natural pullback functor $f^*$ in the category of sets, one can define the {\em preimage formula translator}, $(-)^\natural_f:F\rightarrow F$, from the class $F$ of all formulae in $\llocal$.  

To regard $F$ as a category with arrows determined by the partial order $\vdash_S$ and have $(-)^\natural_f$ be a functor, one needs to consider the following axiom.
\begin{teorem} [Nullstellensatz]\label{A:Nullstellensatz} There is a function symbol of signature $\bf 1 \longrightarrow \typea$ for every ground type $\typea$.
\end{teorem}

The Lambek-Scott \cite{MR856915} deduction system doesn't need this Nullstellensatz to hold to have an unrestricted cut rule and therefore to have transitivity of deduction. This means that in that case $F$ is automatically a category.

In the presence of the Nullstellensatz, the derived rules \ref{t:universalelimination} and \ref{t:existentialintroduction} are valid without the freeness requirement.

\begin{observacion}\label{O:forallexistsadjoints} An easy immediate consequence of the Nullstellensatz, expressing in categorical terms the expected behavior of $\forall$ and $\exists$ (Cf. Theorem 1 of section 9 of Chapter I of \cite{MR1300636}) is the following: Let $F[x_1,\ldots,x_n]$ be the class of all formulae with free variables within $x_1,\ldots,x_n$. Then $\forall x_i$ and $\exists x_i$ are right and left adjoints of the functor $\varsigma:F[x_1,\ldots,\hat{x_i},\ldots,x_n]\hookrightarrow F[x_1,\ldots,x_n]$ defined as $\varsigma:=(-)\wedge x_i=x_i$:
\[\xymatrix{
F[x_1,\ldots,x_n]\ar@<2.5ex>[d]_\dashv^{\forall x_i}\ar@<-3.5ex>[d]^\dashv_{\exists x_i} \\
F[x_1,\ldots,\hat{x_i},\ldots,x_n].\ar[u]^\varsigma
}
\]
\end{observacion}

In \ref{T:starfisafunctor}, the corestriction  $(-)^\times_f:F\rightarrow F\langle y\rangle$ of $(-)^\natural_f$  to the class $F\langle y\rangle$ of all formulae in $\llocal$ not having $y$ as a free variable is shown to have a right adjoint. When regarded as a functor from $(F\downarrow y\in Y)$ to $(F\downarrow x\in X)$, it is reminiscent of Theorem 3 of section 9 of chapter I of \cite{MR1300636}; however, at this stage no assertion is made on the existence of a left adjoint. On the other hand, it is seen in \ref{T:starfipreserveswedgenvee} that $(-)^\natural_f$ is a morphism of lattices. 

\begin{definicion} \label{D:starfdef}Let $S$ be a local set theory on a local language $\llocal$. Let $f:X\rightarrow Y$ be an $S$-function with $X$ of type $\mathbf{PA}$ and $Y$ of type $\mathbf{PB}$, and $y$ a variable of type $\mathbf{B}$. Let $\bm{y}$ be a finite sequence of variables containing the free variables of $\vartheta$ and $y$. Denote by $\bm{x}$ the sequence of variables obtained from $\bm{y}$ by replacing $y$ with the variable $x$ of type $\typea$.  For any formula $\vartheta$ in $\llocal$, its {\em preimage translation by $f$}, or simply $f$-{\em translation}, is the formula
\begin{equation}\label{E:starfdef}
\vartheta_{\bm{y},f,x}^\natural :=(\llbracket\vartheta\rrbracket_{K,\bm{y}}\circ 1\times i_Y\circ 1\times f)^\natural(\bm{x}),
\end{equation}
where $(-)^\natural$ is given by \eqref{E:omegadirectimage}.
\end{definicion}
\begin{observacion}\label{O:slicingoverxinX}
Under the same hyphotheses of \ref{D:starfdef}, one has that
\[\vartheta_{\bm{y},f,x}^\natural\vdash_Sx\in X.\]
\end{observacion}

\begin{teorema}\label{T:starfisafunctor}
Let $S$ be a local set theory in a local language $\llocal$ with Nullstellensatz. Let $f:X\rightarrow Y$ be an $S$-function with $X$ of type $\mathbf{PA}$ and $Y$ of type $\mathbf{PB}$ and fix $x$ and $y$, variables of type $\typea$ and $\mathbf{B}$ respectively. Then, $(-)_{\bm{y},f,x}^\natural:F\rightarrow F$ is a functor, and its corestriction $(-)^\times_f$ to $F\langle y\rangle$, has  $\FE_f:\xi\mapsto \langle x,y\rangle\in\lvert f\rvert\Rightarrow\xi$ as a right adjoint. 
\end{teorema}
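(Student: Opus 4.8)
The plan is to verify the three claims in turn: that $(-)_{\bm{y},f,x}^{\natural}$ respects the order $\vdash_S$ (hence is a functor), that its image lands in $F\langle y\rangle$ so that the correstriction makes sense, and finally that $\FE_f$ is right adjoint to $(-)_{f}^{\times}$. For the functoriality, I would unwind \eqref{E:starfdef}: the map $\llbracket\vartheta\rrbracket_{K,\bm{y}}$ is monotone in $\vartheta$ with respect to $\vDash_{K_S}$ by the soundness/completeness equivalence \eqref{E:soundcompletenessforTS} together with \ref{P:validityTrelation}, precomposition with the fixed arrow $1\times i_Y\circ 1\times f$ preserves $\leq$ in $\topos(S)(-,\subob)$ by \eqref{E:transferedorder}, and finally $(-)^{\natural}$ is order preserving because by \ref{P:omegafunctions}(i) it is (up to the canonical bijection $\Phi$) just the inverse of $(x\mapsto -)$, which is the transfer of the subobject order. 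Composing three monotone maps gives a monotone map, i.e.\ $\vartheta\vdash_S\vartheta'$ implies $\vartheta_{\bm{y},f,x}^{\natural}\vdash_S(\vartheta')_{\bm{y},f,x}^{\natural}$; identities and composition of "arrows" in the poset $F$ are automatic. Here is exactly where the Nullstellensatz is used: without it $\vdash_S$ need not be transitive, so $F$ would not be a category and "functor" would be vacuous — with it, \ref{t:unrestrictedcut} applies and $F$ is a genuine (thin) category.

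That the correstriction $(-)^{\times}_f$ is well defined — i.e.\ that $\vartheta_{\bm{y},f,x}^{\natural}$ has no free occurrence of $y$ — is the content of the construction: the variable list $\bm{x}$ is obtained from $\bm{y}$ by replacing $y$ with $x$, and $(\cdot)^{\natural}(\bm{x})$ in \eqref{E:omegadirectimage} substitutes the tuple $\bm{x}$, so $y$ simply does not appear; one should also invoke \ref{O:slicingoverxinX} to note $\vartheta_{\bm{y},f,x}^{\natural}\vdash_S x\in X$, which is what lets us later regard $(-)^{\times}_f$ as a functor $(F\downarrow y\in Y)\to(F\downarrow x\in X)$ (this reading is mentioned in the surrounding text but the adjunction statement here is the "absolute" one, between $F$ and $F\langle y\rangle$).

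For the adjunction, I need the natural bijection
\[
\bigl(\vartheta^{\times}_f\vdash_S\xi\bigr)\quad\Longleftrightarrow\quad\bigl(\vartheta\vdash_S\FE_f(\xi)\bigr)
=\bigl(\vartheta\vdash_S(\langle x,y\rangle\in\lvert f\rvert\Rightarrow\xi)\bigr),
\]
for $\vartheta\in F$ and $\xi\in F\langle y\rangle$. Since both sides are "at most one arrow", naturality is automatic once the logical equivalence is established. By \ref{t:implicationinhypotheses}(ii) and \ref{t:universalizationontheright}, the right-hand side is equivalent to $\langle x,y\rangle\in\lvert f\rvert,\ \vartheta\vdash_S\xi$; since $\vartheta\vdash_S x\in X$ (\ref{O:slicingoverxinX}) and $x\in X\vdash_S\exists!y\in Y.\langle x,y\rangle\in\lvert f\rvert$ (\ref{P:functionimpliesy=z}, $f$ being a function), and since $y$ is not free in $\xi$ nor — after composing with $\vartheta$ — effectively constrained elsewhere, one can use \ref{t:existentializationontheleft} to pass between "$\langle x,y\rangle\in\lvert f\rvert$ in the hypotheses" and "$y$ evaluated at $f$". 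The crux is to identify $\vartheta^{\times}_f$ precisely with the formula $\exists y(\langle x,y\rangle\in\lvert f\rvert\wedge\vartheta)$ (with $y$ bound): unravelling \eqref{E:starfdef} via the computation of $\llbracket-\rrbracket$ on $\in$ and $\langle\cdot,\cdot\rangle$, the composite with $1\times i_Y\circ 1\times f$ is the characteristic arrow of the image, along $x$, of the pullback of $\{\bm y:\vartheta\}$ under $1\times f$, and by \ref{L:substintoposEbis} / \ref{L:Xisalphatoo} this characteristic is exactly $(x\mapsto\exists y(\langle x,y\rangle\in\lvert f\rvert\wedge\vartheta))$; then $(-)^{\natural}(\bm x)$ and \ref{P:omegafunctions}(ii) give the displayed formula.

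The main obstacle — and the only genuinely delicate point — is this last identification of $\vartheta^{\times}_f$ with $\exists y(\langle x,y\rangle\in\lvert f\rvert\wedge\vartheta)$, because it requires carefully tracking the interpretation clauses (vi)–(viii) through the two canonical isomorphisms and the arrows $1\times i_Y$, $1\times f$, and matching the resulting subobject with the one named syntactically by the existential. Once that lemma is in hand, the adjunction reduces to the intuitionistic equivalence $\bigl(\exists y(\phi\wedge\vartheta)\vdash\xi\bigr)\Leftrightarrow\bigl(\vartheta\vdash(\phi\Rightarrow\xi)\bigr)$ for $y\notin\mathrm{FV}(\xi)$, which follows from \ref{t:existentializationontheleft}, \ref{t:implicationinhypotheses} and \ref{t:slidingexistence}, all available once the Nullstellensatz removes the freeness side-conditions.
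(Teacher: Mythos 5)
Your overall architecture is sound and, at its core, coincides with the paper's: everything is made to hinge on the identification $\vdash_S\vartheta^\natural_{\bm{y},f,x}\Leftrightarrow\exists y(\langle x,y\rangle\in\lvert f\rvert\wedge\vartheta)$, which is precisely the paper's Lemma \ref{L:varthetastarfequalto}, after which the adjunction is exactly the intuitionistic equivalence you state (via \ref{t:existentialintroduction} plus cut in one direction and \ref{t:existentializationontheleft} with $y$ not free in $\xi$ in the other), and the Nullstellensatz enters only to make $\vdash_S$ transitive and to drop freeness side-conditions. Where you diverge is in how the supporting facts are established. The paper gets functoriality directly from the lemma by a one-line syntactic argument ($\vartheta\vdash_S\gamma$ gives $\langle x,y\rangle\in\lvert f\rvert\wedge\vartheta\vdash_S\exists y(\langle x,y\rangle\in\lvert f\rvert\wedge\gamma)$, then \ref{t:existentializationontheleft}), whereas you argue semantically, decomposing \eqref{E:starfdef} into three monotone operations; that works, but each step (precomposition preserving $\leq$, order-preservation of $(-)^\natural$ via \ref{P:omegafunctions}) needs a small pullback/soundness verification you only gesture at. More importantly, the paper proves the crux identification by a long direct syntactic computation through the graphs of $(\bm{y}\mapsto\vartheta)$, $1\times i_Y$ and $1\times f$, while you propose a semantic route through the canonical interpretation, \ref{L:substintoposEbis} and soundness/completeness. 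That route is viable, but as written it is only a sketch, and your description of the composite $\llbracket\vartheta\rrbracket_{K,\bm{y}}\circ 1\times i_Y\circ 1\times f$ as ``the characteristic arrow of the image, along $x$, of the pullback'' conflates two things: the composite is the classifier of the pullback, an arrow out of $A_1\times\cdots\times X$, and the image along $1\times i_X$ only appears when $(-)^\natural(\bm{x})$ reindexes it to a formula whose interpretation lives over $A_1\times\cdots\times A$. Spelling that reindexing out (and matching it with the subobject named by the existential) is exactly where the paper spends its effort in \ref{L:varthetastarfequalto}; your semantic approach buys conceptual transparency and brevity, the paper's syntactic approach buys a self-contained derivation inside the deduction system, but either way that lemma is the real work and should be proved in full rather than left as the acknowledged ``delicate point.''
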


\begin{lema}\label{L:varthetastarfequalto}
Let $S$ be a local set theory in a local language $\llocal$. Let $f:X\rightarrow Y$ be an $S$-function with $X$ of type $\mathbf{PA}$ and $Y$ of type $\mathbf{PB}$. Then
\[\vdash_S\vartheta^\natural_{\bm{y},f,x}\Leftrightarrow\exists y(\langle x,y\rangle\in\lvert f\rvert\wedge\vartheta).\]
\end{lema}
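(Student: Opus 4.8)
The plan is to unwind the definition of $\vartheta^\natural_{\bm y,f,x}$ in \eqref{E:starfdef} and compute the characteristic arrow $\llbracket\vartheta\rrbracket_{K,\bm y}\circ(1\times i_Y)\circ(1\times f)$ explicitly, then recognize the result as the characteristic arrow of the formula $\exists y(\langle x,y\rangle\in\lvert f\rvert\wedge\vartheta)$. The natural route is through the soundness/completeness bridge \eqref{E:soundcompletenessforTS} together with \eqref{E:interpretationcorrespondstofunctions}: since $\llbracket\tau\rrbracket_{K_S,\bm x}=(\bm x\mapsto\tau)$, the arrow $\llbracket\vartheta\rrbracket_{K,\bm y}$ is exactly $(\bm y\mapsto\vartheta):U_{\bm B}\times(\text{other factors})\to\subob$, and precomposition with $1\times f$ corresponds categorically to composing graphs. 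So the whole statement should reduce to a purely syntactic identity about $S$-functions and the $(-)^\natural$ operation of \eqref{E:omegadirectimage}, avoiding interpretation arithmetic altogether once the translation is in place.

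Concretely, first I would use \ref{P:omegafunctions}(i) and \eqref{E:interpretationcorrespondstofunctions} to rewrite $(\llbracket\vartheta\rrbracket_{K,\bm y}\circ(1\times i_Y)\circ(1\times f))^\natural(\bm x)$ as the formula asserting $\langle\bm x,true\rangle\in\lvert h\rvert$ where $h$ is the $S$-function $X\times(\cdots)\xrightarrow{1\times f}Y\times(\cdots)\xrightarrow{1\times i_Y}A\times(\cdots)\xrightarrow{(\bm y\mapsto\vartheta)}\subob$. Second, I would expand the composite graph using \eqref{E:compfun}: the graph of $h$ at $\bm x$ consists of those $\langle\bm x,\omega\rangle$ for which there exists an intermediate value $y$ with $\langle x,y\rangle\in\lvert f\rvert$ (the graph condition for $1\times f$, using \ref{P:functionimpliesy=z} to pass between $\exists!$ and $\exists$) and $\omega\Leftrightarrow\vartheta(\bm y)$ (the graph condition for the representable map $(\bm y\mapsto\vartheta)$, via \ref{P:omegafunctions}(ii)). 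Setting $\omega=true$ then yields $\exists y(\langle x,y\rangle\in\lvert f\rvert\wedge\vartheta)$ after a routine application of \ref{t:slidingexistence} to move the factors not involving $y$ outside the existential, plus Comprehension and the Equality axiom to simplify $true\Leftrightarrow\vartheta$ to $\vartheta$.

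The main obstacle I expect is bookkeeping the variable lists $\bm y$ versus $\bm x$ correctly: $\bm x$ is obtained from $\bm y$ by swapping $y$ for $x$, so $x$ genuinely occurs in $\vartheta^\natural_{\bm y,f,x}$ while $y$ is bound on the right-hand side, and one must check that $y$ is free for itself in all the relevant substitutions and that the ``other factors'' (the variables common to $\bm x$ and $\bm y$) are handled uniformly by the $1\times(-)$ construction. A secondary subtlety is that this lemma is stated \emph{without} the Nullstellensatz, so the proof must not invoke the unrestricted cut \ref{t:unrestrictedcut} or Remark \ref{O:forallexistsadjoints}; every existential manipulation has to go through \ref{t:existentialintroduction}, \ref{t:existentializationontheleft}, \ref{t:existsubstiright} and \ref{t:slidingexistence} with their freeness hypotheses verified, which is precisely where Observation \ref{O:slicingoverxinX} (that $\vartheta^\natural_{\bm y,f,x}\vdash_S x\in X$) will be used to keep things aligned. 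Once the variable management is pinned down, the derivation in each direction is a short chain of cuts.
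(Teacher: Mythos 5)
Your plan is essentially the paper's own proof: after identifying $\llbracket\vartheta\rrbracket_{K,\bm{y}}$ with $(\bm{y}\mapsto\vartheta)$ via \eqref{E:interpretationcorrespondstofunctions}, the paper likewise unwinds membership in the composite graph $\lvert(\bm{y}\mapsto\vartheta)\circ 1\times i_Y\circ 1\times f\rvert$ through \eqref{E:compfun} and the set-builder notation, getting the forward implication by repeated use of \ref{t:existentializationontheleft} and the converse by \ref{t:existsubstiright}, with exactly the variable bookkeeping and freeness checks you describe and without any appeal to the Nullstellensatz or unrestricted cut. The only cosmetic differences are that the paper does not actually need \ref{P:omegafunctions}(i), \ref{t:slidingexistence} or \ref{O:slicingoverxinX} in this argument, handling the $\vartheta=verus$ simplification directly with the Equality axiom.
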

\begin{proof}
Indeed, let $x_1,\ldots,x_n,y$ contain the variables of $\vartheta$. Let $x_1',\ldots,x_n'$ and $x_1'',\ldots,x_n''$ and  $x_1''',\ldots,x_n'''$ be variables of the same types as $x_1,\ldots,x_n$, respectively. Let $x'$ be of the same type as $x$, and $y',y'',y'''$ be different from but of the same type as $y$. Further assume that $y'$ is not free in $\vartheta$. Let $z,z'$ be variables of the same type as $\langle x_1,\ldots,x_n,y\rangle$. Then,
\begin{align*}
\langle\langle x_1,\ldots,x_n,x\rangle,z\rangle=\langle\langle x_1',\ldots,x_n',x'\rangle,\langle x_1',\ldots,x_n',y'\rangle\rangle\wedge& \\
x_1'\in A_1\wedge\cdots\wedge x_n'\in A_n\wedge\langle x',y'\rangle\in\lvert f\rvert\wedge& \\
\langle z,z'\rangle=\langle\langle x_1'',\ldots,x_n'',y''\rangle,\langle x_1'',\ldots,x_n'',y''\rangle\rangle\wedge& \\
y''\in Y\wedge x_1''\in A_1\wedge\cdots\wedge x_n''\in A_n\wedge& \\
\langle z',verus\rangle=\langle\langle x_1''',\ldots,x_n''',y'''\rangle,\vartheta\rangle\wedge& \\
x_1'''\in A_1\wedge\cdots\wedge x_n'''\in A_n\wedge y'''\in B\vdash_S& \\
\langle x,y'\rangle\in\lvert f\rvert\wedge\vartheta(y/y')=verus\vdash_S& \\
\langle x,y'\rangle\in\lvert f\rvert\wedge\vartheta(y/y')\vdash_S& \\
\exists y'(\langle x,y'\rangle\in\lvert f\rvert\wedge&\vartheta(y/y'));
\end{align*}

wherefrom, by \ref{t:existentializationontheleft},
\begin{align*}
\langle\langle x_1,\ldots,x_n,x\rangle,z\rangle=\langle\langle x_1',\ldots,x_n',x'\rangle,\langle x_1',\ldots,x_n',y'\rangle\rangle\wedge& \\
x_1'\in A_1\wedge\cdots\wedge x_n'\in A_n\wedge\langle x',y'\rangle\in\lvert f\rvert\wedge& \\
\langle z,z'\rangle=\langle\langle x_1'',\ldots,x_n'',y''\rangle,\langle x_1'',\ldots,x_n'',y''\rangle\rangle\wedge& \\
y''\in Y\wedge x_1''\in A_1\wedge\cdots\wedge x_n''\in A_n\wedge& \\
\exists x_1'''\cdots\exists x_n'''\exists y'''(\langle z',verus\rangle=\langle\langle x_1''',\ldots,x_n''',y'''\rangle,\vartheta\rangle\wedge &\\
x_1'''\in A_1\wedge\cdots\wedge x_n'''\in A_n\wedge y'''\in B)\vdash_S &\\
\exists y'(\langle x,y'\rangle\in\lvert f\rvert\wedge&\vartheta(y/y'));
\end{align*}
thus,
\begin{align*}
\langle\langle x_1,\ldots,x_n,x\rangle,z\rangle=\langle\langle x_1',\ldots,x_n',x'\rangle,\langle x_1',\ldots,x_n',y'\rangle\rangle\wedge& \\
x_1'\in A_1\wedge\cdots\wedge x_n'\in A_n\wedge\langle x',y'\rangle\in\lvert f\rvert\wedge& \\
\langle z,z'\rangle=\langle\langle x_1'',\ldots,x_n'',y''\rangle,\langle x_1'',\ldots,x_n'',y''\rangle\rangle\wedge& \\
y''\in Y\wedge x_1''\in A_1\wedge\cdots\wedge x_n''\in A_n\wedge& \\
\langle z',verus\rangle\in\lvert(\bm{y}\mapsto\vartheta)\rvert\vdash_S& \\
\exists y'(\langle x,y'\rangle\in\lvert f\rvert\wedge&\vartheta(y/y')).
\end{align*}
Following the same process, one obtains
\begin{multline}
\langle x_1,\ldots,x_n,x\rangle,z\rangle\in\lvert 1\times f\rvert\wedge\langle z,z'\rangle\in\lvert 1\times i_Y\rvert\wedge\langle z',verus\rangle\in\lvert(\bm{y}\mapsto\vartheta)\rvert\\
\vdash_S\exists y'(\langle x,y'\rangle\in\lvert f\rvert\wedge\vartheta(y/y')).\notag
\end{multline}
Lastly, by \ref{t:existentializationontheleft},
\begin{multline}
\langle\langle x_1,\ldots,x_n,x\rangle,verus\rangle\in\lvert(\bm{y}\mapsto\vartheta)\circ 1\times i_Y\circ 1\times f\rvert\vdash_S\\
\exists y'(\langle x,y'\rangle\in\lvert f\rvert\wedge\vartheta(y/y')).\notag
\end{multline}
Therefore,
\[\vartheta^\natural_{\bm{y},f,x}\vdash_S\exists y.\langle x,y\rangle\in\lvert f\rvert\wedge\vartheta.\]
Conversely,
\begin{align}
\langle x,y\rangle\in\lvert f\rvert\wedge\vartheta &\vdash_S\vartheta\notag\\
&\vdash_S\vartheta=verus\notag\\
&\vdash_S\langle\langle x_1,\ldots,x_n,y\rangle,verus\rangle=\langle\langle x_1,\ldots,x_n,y\rangle,\vartheta\rangle;\notag
\end{align}
wherefrom, by \ref{t:existsubstiright},
\begin{multline}
\langle x,y\rangle\in\lvert f\rvert\wedge\vartheta\vdash_S\\
\exists x_1'\cdots\exists x_n'\exists y'(\langle\langle x_1,\ldots,x_n,y\rangle,verus\rangle=\langle\langle x_1',\ldots,x_n',y'\rangle,\vartheta(\bm{y}/\bm{y'})\rangle),\notag
\end{multline}
so that
\[\langle x,y\rangle\in\lvert f\rvert\wedge\vartheta\vdash_S\langle\langle x_1,\ldots,x_n,y\rangle,verus\rangle\in\lvert(\bm{y}\mapsto\vartheta)\rvert.\]
Similarly,
\[\langle x,y\rangle\in\lvert f\rvert\wedge\vartheta\vdash_S\langle\langle x_1,\ldots,x_n,y\rangle,\langle x_1,\ldots,x_n,y\rangle\rangle\in\lvert 1\times i_Y\rvert\]
and
\[\langle x,y\rangle\in\lvert f\rvert\wedge\vartheta\vdash_S\langle\langle x_1,\ldots,x_n,x\rangle,\langle x_1,\ldots,x_n,y\rangle\rangle\in\lvert 1\times f\rvert.\]
Therefore,
\begin{multline}
\langle x,y\rangle\in\lvert f\rvert\wedge\vartheta\vdash_S\\
\langle\langle x_1,\ldots,x_n,y\rangle,verus\rangle\in\lvert(\bm{y}\mapsto\vartheta)\rvert\wedge\\
\langle\langle x_1,\ldots,x_n,y\rangle,\langle x_1,\ldots,x_n,y\rangle\rangle\in\lvert 1\times i_Y\rvert\wedge\\
\langle\langle x_1,\ldots,x_n,x\rangle,\langle x_1,\ldots,x_n,y\rangle\rangle\in\lvert 1\times f\rvert\notag.
\end{multline}
Thus, by \ref{t:existsubstiright},
\[\langle x,y\rangle\in\lvert f\rvert\wedge\vartheta\vdash_S\langle\langle x_1,\ldots,x_n,x\rangle,verus\rangle\in\lvert(\bm{y}\mapsto\vartheta)\circ 1\times i_Y\circ 1\times f\rvert.\]
This, together with \ref{t:existentializationontheleft}, yields that

\[ \exists y(\langle x,y\rangle\in\lvert f\rvert\wedge\vartheta)\vdash_S\vartheta^\natural_{\bm{y},f,x}.\qedhere
\]
\end{proof}

This lemma has a couple of immediate consequences. 
\begin{corolario}\label{C:superfluousvariablesforstar}
Let $S$ be an LST in a local language $\llocal$. Let $f:X\rightarrow Y$ be an $S$-function with $X$ of type $\mathbf{PA}$ and $Y$ of type $\mathbf{PB}$, and $\vartheta$ a formula in $\llocal$. Let $\bm{y}$ be a finite sequence of variables containing those of $\vartheta$, as well as $y$. Let $\bm{y'}$ be a subsequence of $\bm{y}$ still containing the free variables of $\vartheta$, as well as $y$. Then, $\vdash_S\vartheta_{\bm{y},f,x}^\natural=\vartheta_{\bm{y'},f,x}^\natural$.
\end{corolario}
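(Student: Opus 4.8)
The plan is to reduce Corollary \ref{C:superfluousvariablesforstar} to Lemma \ref{L:varthetastarfequalto} by noting that both formulae $\vartheta^\natural_{\bm{y},f,x}$ and $\vartheta^\natural_{\bm{y'},f,x}$ are, up to provable equivalence, the \emph{same} formula $\exists y(\langle x,y\rangle\in\lvert f\rvert\wedge\vartheta)$, which manifestly does not depend on the chosen enveloping sequence of variables. Concretely, since $\bm{y'}$ is a subsequence of $\bm{y}$ and both sequences contain every free variable of $\vartheta$ and the variable $y$, Lemma \ref{L:varthetastarfequalto} applies verbatim to each of the two $f$-translations: it gives $\vdash_S\vartheta^\natural_{\bm{y},f,x}\Leftrightarrow\exists y(\langle x,y\rangle\in\lvert f\rvert\wedge\vartheta)$ and likewise $\vdash_S\vartheta^\natural_{\bm{y'},f,x}\Leftrightarrow\exists y(\langle x,y\rangle\in\lvert f\rvert\wedge\vartheta)$.

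From here I would chain the two equivalences: by symmetry and transitivity of $\Leftrightarrow$ (all of which are available as intuitionistic derived rules, cf. \cite{MR972257}), one obtains $\vdash_S\vartheta^\natural_{\bm{y},f,x}\Leftrightarrow\vartheta^\natural_{\bm{y'},f,x}$. Since a biconditional $\alpha\Leftrightarrow\beta$ between formulae abbreviates $\alpha=\beta$ by (L1), this is precisely the assertion $\vdash_S\vartheta^\natural_{\bm{y},f,x}=\vartheta^\natural_{\bm{y'},f,x}$, which is the statement of the corollary. One small point worth spelling out is that Lemma \ref{L:varthetastarfequalto} as stated fixes an enveloping sequence implicitly through the notation $\vartheta^\natural_{\bm{y},f,x}$; the content needed here is simply that the right-hand side $\exists y(\langle x,y\rangle\in\lvert f\rvert\wedge\vartheta)$ produced by that lemma is literally independent of which such sequence one uses, so no re-derivation is required.

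I do not anticipate any genuine obstacle: the corollary is a bookkeeping consequence of the lemma, and the only thing to be careful about is checking that the hypotheses of Lemma \ref{L:varthetastarfequalto} are met for both $\bm{y}$ and $\bm{y'}$ — namely that each is a finite sequence of variables containing the free variables of $\vartheta$ together with $y$, and that $x$ has the type of the variable replaced. Both are guaranteed by the hypotheses of the corollary (a subsequence of $\bm y$ still containing the free variables of $\vartheta$ and $y$ satisfies exactly the requirements Definition \ref{D:starfdef} and Lemma \ref{L:varthetastarfequalto} impose on an enveloping sequence). Hence the proof is two invocations of Lemma \ref{L:varthetastarfequalto} followed by transitivity of $\Leftrightarrow$, and can be written in two or three lines.
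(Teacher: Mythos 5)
Your proposal is correct and is exactly the argument the paper intends: the corollary is stated as an ``immediate consequence'' of Lemma \ref{L:varthetastarfequalto}, namely that both $\vartheta^\natural_{\bm{y},f,x}$ and $\vartheta^\natural_{\bm{y'},f,x}$ are provably equal to $\exists y(\langle x,y\rangle\in\lvert f\rvert\wedge\vartheta)$, which does not depend on the enveloping sequence. Chaining the two equivalences (recalling that $\Leftrightarrow$ is $=$ by (L1)) is precisely the intended two-line proof, so there is nothing to add.
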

\begin{notacion}
Under the same conditions, given that one can add superfluous variables, denote $\vartheta_{\bm{y},f,x}^\natural$ as $\vartheta^\natural_{y,f,x}$; if from context one can infer the fixed variables, as $\vartheta_f^\natural$; and if the $S$-function $f$ is clear from context, simply as $\vartheta^\natural$.
\end{notacion}

\begin{corolario}\label{C:starandconstants}
Let $S$ be an LST in a local language $\llocal$. Let $f:X\rightarrow Y$ be an $S$-function with $X$ of type $\mathbf{PA}$ determined by $\alpha$ and $Y$ of type $\mathbf{PB}$, and $\vartheta$ a formula in $\llocal$ not having $y$ as a free variable. Then, $\vdash_S\vartheta^\natural_f=\vartheta\wedge x\in X$.
\end{corolario}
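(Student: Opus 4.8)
The plan is to reduce everything to Lemma \ref{L:varthetastarfequalto} and then exploit the fact that $y$ does not occur free in $\vartheta$. By that lemma we have $\vdash_S\vartheta^\natural_f\Leftrightarrow\exists y(\langle x,y\rangle\in\lvert f\rvert\wedge\vartheta)$, so it suffices to prove $\vdash_S\exists y(\langle x,y\rangle\in\lvert f\rvert\wedge\vartheta)\Leftrightarrow\vartheta\wedge x\in X$. Since $y$ is not free in $\vartheta$, Theorem \ref{t:slidingexistence} gives $\vdash_S\exists y(\langle x,y\rangle\in\lvert f\rvert\wedge\vartheta)\Leftrightarrow\vartheta\wedge\exists y(\langle x,y\rangle\in\lvert f\rvert)$. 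So the whole statement comes down to showing $\vdash_S\exists y(\langle x,y\rangle\in\lvert f\rvert)\Leftrightarrow x\in X$, i.e. that the domain of the graph of $f$ is exactly $X$.

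For the direction $x\in X\vdash_S\exists y(\langle x,y\rangle\in\lvert f\rvert)$: since $f$ is an $S$-function we have $\vdash_S\lvert f\rvert\in Y^X$, and unwinding the definition of $Y^X$ (as $\{u:u\subseteq Y\times X\wedge\forall y\in Y\exists!x\in X.\langle y,x\rangle\in u\}$, read with the roles of the coordinates matching the convention used for graphs here) yields $x\in X\vdash_S\exists!y\in Y(\langle x,y\rangle\in\lvert f\rvert)$, and a fortiori $x\in X\vdash_S\exists y(\langle x,y\rangle\in\lvert f\rvert)$ by discarding uniqueness and the bound $y\in Y$ via Theorem \ref{t:existsubstiright} (or directly by weakening). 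For the converse, $\langle x,y\rangle\in\lvert f\rvert\vdash_S x\in X$ because $\vdash_S\lvert f\rvert\subseteq X\times Y$ gives $\langle x,y\rangle\in\lvert f\rvert\vdash_S\langle x,y\rangle\in X\times Y\vdash_S x\in X$ (projecting onto the first coordinate, using the Products axioms and Comprehension); then Theorem \ref{t:existentializationontheleft} applied with $y$ not free in $x\in X$ yields $\exists y(\langle x,y\rangle\in\lvert f\rvert)\vdash_S x\in X$. Combining the two directions with the Equivalence rule and chaining through the equivalences above gives $\vdash_S\vartheta^\natural_f\Leftrightarrow\vartheta\wedge x\in X$, and then Extensionality (or Theorem \ref{t:samesetsamefourmulas} in reverse, via the passage to $\{\,\cdot\,\}$) upgrades the biconditional to the asserted equality $\vdash_S\vartheta^\natural_f=\vartheta\wedge x\in X$.

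I do not expect a genuine obstacle here; the only point requiring care is bookkeeping the coordinate conventions for $\lvert f\rvert$ and $Y^X$ so that "$\langle x,y\rangle\in\lvert f\rvert$" is indeed the membership whose domain-projection lands in $X$, and making sure the freeness side-conditions for Theorems \ref{t:slidingexistence}, \ref{t:existentializationontheleft} and \ref{t:existsubstiright} are met — all of which hold precisely because $y$ is assumed not free in $\vartheta$ and $x$ is a fresh variable of type $\typea$ distinct from $y$. Note that the Nullstellensatz (Axiom \ref{A:Nullstellensatz}) is \emph{not} needed for this corollary, consistently with its hypotheses only asking for an LST; the functoriality machinery of \ref{T:starfisafunctor} plays no role, since we go through Lemma \ref{L:varthetastarfequalto} directly.
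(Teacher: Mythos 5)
Your proposal is correct and follows essentially the same route as the paper: both pass through Lemma \ref{L:varthetastarfequalto} and \ref{t:slidingexistence} and reduce the claim to the equivalence of $x\in X$ with $\exists y(\langle x,y\rangle\in\lvert f\rvert)$, which holds because $f$ is an $S$-function. The only cosmetic differences are that the paper invokes Remark \ref{O:slicingoverxinX} for $\vartheta^\natural_f\vdash_S x\in X$ where you re-derive it from $\lvert f\rvert\subseteq X\times Y$, and that your final ``upgrade'' from the biconditional to the equality is vacuous, since by (L1) $\alpha\Leftrightarrow\beta$ \emph{is} the formula $\alpha=\beta$.
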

\proof
From \ref{L:varthetastarfequalto} and \ref{t:slidingexistence}, one obtains $\vartheta^\natural_f\vdash_S\vartheta$ and, by \ref{O:slicingoverxinX},
\[\vartheta^\natural_f\vdash_S\vartheta\wedge x\in X.\]
Conversely,  it follows that $\vartheta\wedge x\in X\vdash_S\vartheta$ and
\[
x\in X \vdash_S\exists!y(\langle x,y\rangle\in\lvert f\rvert) \vdash_S\exists y(\langle x,y\rangle\in\lvert f\rvert).
\]
Wherefrom,
\belowdisplayskip=-12pt
\begin{align*}
\vartheta\wedge x\in X &\vdash_S\exists y(\langle x,y\rangle\in\lvert f\rvert)\wedge\vartheta\\
&\vdash_S\exists y(\langle x,y\rangle\in\lvert f\rvert\wedge\vartheta) &\text{by (\ref{t:slidingexistence})}\\
&\vdash_S\vartheta^\natural_f.
\end{align*}\qedhere
\endproof
\belowdisplayskip=12pt plus 3pt minus 9pt

\begin{proof}[Proof of \ref{T:starfisafunctor}] In light of \ref{t:unrestrictedcut}, the Nullstellensatz \ref{A:Nullstellensatz} yields the transitivity of deduction, which renders $F$ a category. 

Let $\vartheta,\gamma\in F$ be such that $\vartheta\vdash_S\gamma.$ Then,
\[
\langle x,y\rangle\in\lvert f\rvert\wedge\vartheta \vdash_S\langle x,y\rangle\in\lvert f\rvert\wedge\gamma\vdash_S\exists y(\langle x,y\rangle\in\lvert f\rvert\wedge\gamma).
\]
Therefore, by \ref{t:existentializationontheleft} and \ref{L:varthetastarfequalto},
\[\vartheta^\natural_f\vdash_S\gamma^\natural_f.\]

Now, to see that the corestriction $(-)^\natural_f$ to $F\langle y\rangle$ has $\xi\mapsto\langle x,y\rangle\in\lvert f\rvert\Rightarrow\xi$ as a right adjoint, first observe that $\xi\mapsto\langle x,y\rangle\in\lvert f\rvert\Rightarrow\xi$ is functorial. Indeed, let $\xi\vdash_S\zeta$. Thus, by thinning, $\xi,\langle x,y\rangle\in\lvert f\rvert\vdash_S\zeta$. So that by \ref{t:implicationinhypotheses} together with $\langle x,y\rangle\in\lvert f\rvert\vdash_S\langle x,y\rangle\in\lvert f\rvert$,
\[
\langle x,y\rangle\in\lvert f\rvert\Rightarrow\xi,\langle x,y\rangle\in\lvert f\rvert\vdash_S\zeta;
\]
wherefrom 
\[\langle x,y\rangle\in\lvert f\rvert\Rightarrow\xi\vdash_S\langle x,y\rangle\in\lvert f\rvert\Rightarrow\zeta.\]
Lastly, to see that $\xi\mapsto\langle x,y\rangle\in\lvert f\rvert\Rightarrow\xi$ is a right adjoint of $(-)^\times_f$, assume that $\vartheta^\times_f\vdash_S\xi$. Thus by \ref{L:varthetastarfequalto}, $\exists y(\langle x,y\rangle\in\lvert f\rvert\wedge\vartheta)\vdash_S\xi$. By \ref{t:existentialintroduction} and the cut rule,
\[
\langle x,y\rangle\in\lvert f\rvert\wedge\vartheta\vdash_S\xi.
\]
Wherefrom, $\vartheta\vdash_S\langle x,y\rangle\in\lvert f\rvert\Rightarrow\xi$.

Conversely, suppose $\vartheta\vdash_S\langle x,y\rangle\in\lvert f\rvert\Rightarrow\xi$. Thus, $\langle x,y\rangle\in\lvert f\rvert\wedge\vartheta\vdash_S\xi$. So that by \ref{t:existentializationontheleft},

\belowdisplayskip=-12pt
\[\exists y(\langle x,y\rangle\in\lvert f\rvert\wedge\vartheta)\vdash_S\xi.\qedhere
\]
\end{proof}
\belowdisplayskip=12pt plus 3pt minus 9pt

\begin{observacion}
The restriction of $(-)^\times_f$ to $(F\downarrow y\in Y)$ has
\[\xi\mapsto y\in Y\wedge(\langle x,y\rangle\in\lvert f\rvert\Rightarrow\xi)\]
as a right adjoint.
\end{observacion}

\begin{corolario}
Let $S$ be a local set theory in a local language $\llocal$ with Nullstellensatz. Let $\alpha$ be a formula in $\llocal$ and let $X:=\{\langle x_1,\ldots,x_n\rangle:\alpha\}$, with $x_1,\ldots,x_n$ variables of type $\typea_1,\ldots,\typea_n$, resp., containing the free variables of $\alpha$. Then, 
\[\vdash_S\FE_{T_X}(\xi)=(\alpha\Rightarrow\xi).
\]
\end{corolario}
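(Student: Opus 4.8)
The statement to prove identifies $\FE_{T_X}(\xi)$ with $\alpha \Rightarrow \xi$, where $T_X$ is the maximal characteristic of $X$, viewed as an $S$-function. The plan is to unwind the definition of $\FE_f$ from Theorem \ref{T:starfisafunctor}: for an $S$-function $f\colon X\to Y$ one has $\FE_f(\xi) = (\langle x,y\rangle\in\lvert f\rvert \Rightarrow \xi)$. So the task reduces to computing the graph of $T_X$ regarded as an $S$-function and showing that $\langle x,y\rangle\in\lvert T_X\rvert$ is provably equivalent to $\alpha$, after which the equality of formulae follows from the Equivalence rule (or rather from the fact that $F$ is a partial order, so provably equivalent formulae are equal as objects of $F$).

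First I would pin down what $T_X$ is as an $S$-function. By Lemma \ref{L:Xisalphatoo} and the surrounding discussion, $X = \{\langle x_1,\dots,x_n\rangle : \alpha\}$ sits inside the universal set $U$ of type $\mathbf{PA}$ via $i_X = (x\mapsto x)$, with characteristic $(x\mapsto\alpha) = \llbracket\alpha\rrbracket_x$. The maximal characteristic $T_X$ of the object $X$ is $\chi(1_X)$, i.e. the $S$-function $X \to \subob$ that is constantly $true$ on $X$; concretely its graph is $\{\langle x, \omega\rangle : x\in X \wedge \omega = true\}$, so $\langle x,\omega\rangle\in\lvert T_X\rvert$ is provably equivalent to $x\in X \wedge \omega = true$. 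Here the target type is $\Pot\mathbf{A}$'s companion, but the key point is only that membership in the graph unwinds to $x\in X$ together with the second coordinate being $true$. Since $X$ is determined by $\alpha$, $x\in X$ is in turn provably equivalent to $\alpha$ (via Comprehension), so $\langle x,y\rangle\in\lvert T_X\rvert$ is provably equivalent to $\alpha\wedge y = true$.

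Next I would plug this into $\FE_{T_X}(\xi) = (\langle x,y\rangle\in\lvert T_X\rvert \Rightarrow \xi)$. Using the equivalence just established and the fact that $(-)\Rightarrow\xi$ respects provable equivalence of antecedents (an easy consequence of the intuitionistic calculus, e.g. via \ref{t:implicationinhypotheses}), we get $\FE_{T_X}(\xi)$ provably equivalent to $((\alpha\wedge y=true)\Rightarrow\xi)$. Then I would simplify: since $y$ is a fresh variable of type $\mathbf{\Omega}$ not occurring in $\alpha$ or $\xi$, and $\vdash_S \exists y(y = true)$ trivially, currying/uncurrying gives $(\alpha\wedge y=true)\Rightarrow\xi$ equivalent to $\alpha\Rightarrow(y=true\Rightarrow\xi)$, and the inner $y=true\Rightarrow\xi$ collapses; more cleanly, one shows directly $\alpha, (\alpha\wedge y=true)\Rightarrow\xi \vdash_S \xi$ using $\vdash_S true$ hence $\vdash_S y(y/true) = true$ and substitution, while conversely $\alpha\Rightarrow\xi \vdash_S (\alpha\wedge y=true)\Rightarrow\xi$ by thinning. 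Hence $\FE_{T_X}(\xi)$ is provably equivalent to $\alpha\Rightarrow\xi$, and since $F$ is a partial order under $\vdash_S$ (Nullstellensatz is in force), the two are equal in $F$, which is the claim.

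The main obstacle I anticipate is bookkeeping around the exact form of the graph of $T_X$ as an $S$-function and the variable $y$: one must be careful that $T_X$ is the $S$-function whose domain is $X$ (not $U$) and whose codomain is $\subob$, so that its graph really is $\{\langle x,\omega\rangle : x\in X\wedge\omega=true\}$ and the preimage translator $\FE_{T_X}$ is legitimately applied with $y = \omega$ a variable of type $\mathbf{\Omega}$. The manipulation $(\alpha\wedge y=true)\Rightarrow\xi \;\dashv\vdash_S\; \alpha\Rightarrow\xi$ is itself routine but deserves an explicit line or two, since it is where the specific choice $f = T_X$ (as opposed to a general $f$) is used. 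Everything else is a direct substitution into results already established, principally \ref{T:starfisafunctor}, \ref{L:Xisalphatoo}, and the elementary properties of $\Rightarrow$.
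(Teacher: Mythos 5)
Your unwinding of $\lvert T_X\rvert$ is fine: $\langle\bm{x},\omega\rangle\in\lvert T_X\rvert$ is indeed provably equivalent to $\alpha\wedge\omega=true$, with $\omega$ the free codomain variable of type $\mathbf{\Omega}$. The gap is the final simplification. The direction $(\alpha\wedge\omega=true)\Rightarrow\xi\vdash_S\alpha\Rightarrow\xi$ is not derivable, and the justification you sketch misuses Substitution, which only passes from $\Gamma:\beta$ to $\Gamma(y/\tau):\beta(y/\tau)$, never back from an instance to the general sequent. Concretely, the sequent $\alpha,(\alpha\wedge\omega=true)\Rightarrow\xi:\xi$ you assert becomes, for $\alpha:=true$ and $\xi:=false$, the sequent $\neg\omega:false$, i.e.\ $\vdash_S\neg\neg\omega$; substituting $\omega/false$ then gives $\vdash_S false$, so your key sequent is derivable only in inconsistent theories. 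Likewise the ``inner $\omega=true\Rightarrow\xi$'' does not collapse: it is $\omega\Rightarrow\xi$, not $\xi$. With a genuinely free $\omega$ left in the antecedent this step cannot be repaired.

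The paper avoids the problem by never introducing the free codomain variable: from the pullback of \ref{L:Xisalphatoo} it obtains $(\bm{x}\mapsto\alpha)\circ i_X=T_X$ and then proves $\vdash_S\langle\langle x_1,\ldots,x_n\rangle,\alpha\rangle\in\lvert T_X\rvert\Leftrightarrow\alpha$, i.e.\ it evaluates the membership formula with the term $\alpha$ itself occupying the $\mathbf{\Omega}$-coordinate---the same instantiation $\omega/\alpha$ that is made explicit in the proposition immediately following the corollary, where $\vdash_S(\vartheta)^\times_{T_X}\Leftrightarrow\alpha\wedge\vartheta(\omega/\alpha)$. Read with that instantiation, the antecedent of $\FE_{T_X}(\xi)$ is provably equivalent to $\alpha$, and the stated equality follows at once by congruence of $\Rightarrow$ in the antecedent (Equivalence rule), exactly as in the paper's short argument. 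So the missing idea is this instantiation of the codomain variable; under your literal free-variable reading the identity you set out to prove reduces to the non-theorem $\vdash_S\neg\neg\omega$ and hence cannot be established as you propose.
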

\begin{proof}
Consider the following pullback diagram (cf. \ref{L:Xisalphatoo}):
\[
\xymatrix{
X\ar[r]\ar[d]_{i_X} & 1\ar[d]^\top \\
A_1\times\cdots\times A_n\ar[r]_(.7){(\bm{x}\mapsto\alpha)} & \subob.
}
\]
This implies that $(\bm{x}\mapsto\alpha)\circ i_X=T_X$. From which, $\vdash_S\langle\langle x_1,\ldots,x_n\rangle,\alpha\rangle\in\lvert T_X\rvert\Leftrightarrow\alpha$; indeed,
\begin{align}
\langle\langle x_1,\ldots,x_n\rangle,\alpha\rangle\in\lvert T_X\rvert &\vdash_S\langle x_1,\ldots,x_n\rangle\in X\notag\\
&\vdash_S\alpha\notag.
\end{align}
Conversely,
\belowdisplayskip=-12pt
\begin{align}
\alpha &\vdash_S\langle x_1,\ldots,x_n\rangle\in X\notag\\
&\vdash_S\langle\langle x_1,\ldots,x_n\rangle,\alpha\rangle\in\lvert(\bm{x}\mapsto\alpha)\rvert &\text{(Subsection \ref{SS:termsinducefunctions})}\notag\\
&\vdash_S\langle\langle x_1,\ldots,x_n\rangle,\alpha\rangle\in\lvert T_X\rvert\notag.
\end{align}\qedhere
\end{proof}
\belowdisplayskip=12pt plus 3pt minus 9pt

Recall that exponentiation in $F$ is given by $\alpha\wedge\vartheta\vdash_S\xi$ if and only if $\vartheta\vdash_S\alpha\Rightarrow\xi$. The following result exhibits $(-)^\times_f\dashv\FE_f$ as a generalization of exponentiation in $F$.
\begin{proposicion} Let $\alpha$ and $\vartheta$ be formulae, and let  $\omega$ be a variable of type $\bm{\Omega}$ such that $\alpha$ is free for $\omega$ in $\vartheta$ and $\omega$ is free in $\vartheta$. Then,

\[\vdash_S(\vartheta)^\times_{T_X}\Leftrightarrow\alpha\wedge\vartheta(\omega/\alpha).\]
\end{proposicion}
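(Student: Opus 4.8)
The plan is to unfold $(\vartheta)^\times_{T_X}$ via \ref{L:varthetastarfequalto}, identify the graph $\lvert T_X\rvert$ on $X$, and then collapse the resulting existential over the truth-value variable by a one-point rule. Throughout write $x$ for the tuple $\langle x_1,\ldots,x_n\rangle$, and, as in \ref{D:starfdef}, take $\omega$ (the variable of type $\bm{\Omega}$ playing the role of the target variable $y$ of the $(-)^\natural$-construction) not to occur in $\alpha$.

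First I would apply \ref{L:varthetastarfequalto} to $f=T_X$ with $y:=\omega$, obtaining
\[
\vdash_S(\vartheta)^\times_{T_X}\Leftrightarrow\exists\omega(\langle x,\omega\rangle\in\lvert T_X\rvert\wedge\vartheta),
\]
so it suffices to prove $\vdash_S\exists\omega(\langle x,\omega\rangle\in\lvert T_X\rvert\wedge\vartheta)\Leftrightarrow\alpha\wedge\vartheta(\omega/\alpha)$. The next step is to pin down $\lvert T_X\rvert$: the pullback square of \ref{L:Xisalphatoo} makes $(\bm{x}\mapsto\alpha)\circ i_X=T_X$ (this is already used in the preceding corollary), and unfolding this composite through \eqref{E:compfun} together with Comprehension ($x\in X\Leftrightarrow\alpha$) yields $\vdash_S\langle x,\omega\rangle\in\lvert T_X\rvert\Leftrightarrow\alpha\wedge(\omega=\alpha)$. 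Alternatively and more cheaply: $\langle x,\omega\rangle\in\lvert T_X\rvert\vdash_S x\in X\vdash_S\alpha$ since $\lvert T_X\rvert\subseteq X\times\subob$; from $\alpha$ the preceding corollary gives $\langle x,\alpha\rangle\in\lvert T_X\rvert$; functionality of $T_X$ through \ref{P:functionimpliesy=z} then forces $\omega=\alpha$; and the converse direction is immediate from the preceding corollary and the Equality axiom.

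Substituting this equivalence into the existential and re-associating conjunctions, $\exists\omega(\langle x,\omega\rangle\in\lvert T_X\rvert\wedge\vartheta)$ becomes $\exists\omega(\alpha\wedge((\omega=\alpha)\wedge\vartheta))$; since $\omega$ is free in $(\omega=\alpha)\wedge\vartheta$ but not in $\alpha$, \ref{t:slidingexistence} turns this into $\alpha\wedge\exists\omega((\omega=\alpha)\wedge\vartheta)$. It then remains to prove the one-point rule $\vdash_S\exists\omega((\omega=\alpha)\wedge\vartheta)\Leftrightarrow\vartheta(\omega/\alpha)$: for $\Rightarrow$, from $\omega=\alpha$ and $\vartheta$ the Equality axiom yields $\vartheta(\omega/\alpha)$ --- this is precisely where the hypothesis that $\alpha$ be free for $\omega$ in $\vartheta$ is used --- and since $\omega$ is no longer free one concludes by \ref{t:existentializationontheleft}; for $\Leftarrow$, $\vdash_S\alpha=\alpha$ gives $\vartheta(\omega/\alpha)\vdash_S((\omega=\alpha)\wedge\vartheta)(\omega/\alpha)$, whence $\exists\omega((\omega=\alpha)\wedge\vartheta)$ by \ref{t:existsubstiright}, whose side conditions hold because $\alpha$ is free for $\omega$ in $(\omega=\alpha)\wedge\vartheta$ and every variable of $\alpha$ appears in the conclusion. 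Chaining the three equivalences gives $\vdash_S(\vartheta)^\times_{T_X}\Leftrightarrow\alpha\wedge\vartheta(\omega/\alpha)$.

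The main obstacle is the bookkeeping: identifying $\lvert T_X\rvert$ while keeping the variable conventions straight (reconciling the single product-type variable $x$ with the tuple $x_1,\ldots,x_n$) and verifying the freeness side conditions in the applications of \ref{t:slidingexistence}, \ref{t:existentializationontheleft}, and \ref{t:existsubstiright}. None of this is deep, but each point needs care; once these are in hand the result is a direct chain of equivalences from results already in the text.
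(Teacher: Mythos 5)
Your proposal is correct and follows essentially the same route as the paper: both directions rest on the preceding corollary ($\alpha$ iff $\langle\langle x_1,\ldots,x_n\rangle,\alpha\rangle\in\lvert T_X\rvert$), the Equality axiom to trade $\omega$ for $\alpha$, and the rules \ref{t:existentializationontheleft} and \ref{t:existsubstiright}, combined with \ref{L:varthetastarfequalto}. The only difference is packaging --- you first isolate $\langle x,\omega\rangle\in\lvert T_X\rvert\Leftrightarrow\alpha\wedge(\omega=\alpha)$ (using \ref{P:functionimpliesy=z}) and then apply \ref{t:slidingexistence} and a one-point rule, whereas the paper argues directly via $\omega=\mathit{verus}=\alpha$; this is a harmless reorganization, not a different proof.
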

\proof
Notice that $\langle\langle x_1,\ldots,x_n\rangle,\omega\rangle\in\lvert T_X\rvert\vdash_S\omega=verus$ and that
\[
\langle\langle x_1,\ldots,x_n\rangle,\omega\rangle\in\lvert T_X\rvert\vdash_S\langle x_1,\ldots,x_n\rangle\in X\vdash_S\alpha\vdash_S\alpha=verus;
\]
from which, $\langle\langle x_1,\ldots,x_n\rangle,\omega\rangle\in\lvert T_X\rvert\wedge\vartheta\vdash_S\alpha\wedge\vartheta(\omega/\alpha)$, and, from this, by \ref{t:existentializationontheleft}, 
\[
\exists\omega(\langle\langle x_1,\ldots,x_n\rangle,\omega\rangle\in\lvert T_X\rvert\wedge\vartheta)\vdash_S\alpha\wedge\vartheta(\omega/\alpha).
\]

Conversely, $\alpha\wedge\vartheta(\omega/\alpha)\vdash_S\langle\langle x_1,\ldots,x_n\rangle,\alpha\rangle\in\lvert T_X\rvert\wedge\vartheta(\omega/\alpha)$; from which, by \ref{t:existsubstiright},
\[
\alpha\wedge\vartheta(\omega/\alpha)\vdash_S\exists\omega(\langle\langle x_1,\ldots,x_n\rangle,\omega\rangle\in\lvert T_X\rvert\wedge\vartheta),
\]
provided $\alpha$ is free for $\omega$ in $\vartheta$ and $\omega$ is free in $\vartheta$; So, under these conditions,
\belowdisplayskip=-12pt
\[\vdash_S\exists\omega(\langle\langle x_1,\ldots,x_n\rangle,\omega\rangle\in\lvert T_X\rvert\wedge\vartheta)\Leftrightarrow\alpha\wedge\vartheta(\omega/\alpha).\qedhere\]
\endproof
\belowdisplayskip=12pt plus 3pt minus 9pt

Finally, the following result continues the analysis of $(-)^\natural$ as a functor. It is a more general result than that in \ref{T:starpreservinglogicaloperations} below, albeit not as complete. 

\begin{teorema}\label{T:starfipreserveswedgenvee}
Given the same conditions of \ref{T:starfisafunctor}, it follows that $(-)^\natural_f$ is a morphism of lattices: For any $\vartheta$, $\gamma$, 
\begin{align}
&\vdash_S (\vartheta\wedge\gamma)^\natural_f=\vartheta^\natural_f\wedge\gamma^\natural_f,\notag\\
&\vdash_S (\vartheta\vee\gamma)^\natural_f =\vartheta^\natural_f\vee\gamma^\natural_f.\notag
\end{align}
\end{teorema}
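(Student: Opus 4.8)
The plan is to reduce everything to \ref{L:varthetastarfequalto}, which gives $\vdash_S\vartheta^\natural_f\Leftrightarrow\exists y(\langle x,y\rangle\in\lvert f\rvert\wedge\vartheta)$, and then argue purely in intuitionistic logic using the derived rules collected in the Preliminaries. Both identities are of the form $(\text{distribute }\exists y(\langle x,y\rangle\in\lvert f\rvert\wedge-)\text{ over a binary connective})$, so the real content is that, under the existential prefix $\exists y(\langle x,y\rangle\in\lvert f\rvert\wedge-)$, the guard $\langle x,y\rangle\in\lvert f\rvert$ behaves like a subsingleton: $f$ being an $S$-function means $x\in X\vdash_S\exists!y\in Y(\langle x,y\rangle\in\lvert f\rvert)$, so two witnesses must coincide (\ref{P:functionimpliesy=z}), which is what lets the two copies of the existential recombine into one.

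For the meet, the forward direction $\vdash_S(\vartheta\wedge\gamma)^\natural_f\vdash_S\vartheta^\natural_f\wedge\gamma^\natural_f$ is just functoriality already proved in \ref{T:starfisafunctor}: $\vartheta\wedge\gamma\vdash_S\vartheta$ and $\vartheta\wedge\gamma\vdash_S\gamma$ give the two projections, and $\wedge$ is a product in $F$. The converse is where the subsingleton property is used. Working with the \ref{L:varthetastarfequalto} description, I would take a fresh variable $y'$ of the type of $y$ not free in $\gamma$, rename in the $\gamma$-existential so that $\vartheta^\natural_f\wedge\gamma^\natural_f\vdash_S\exists y(\langle x,y\rangle\in\lvert f\rvert\wedge\vartheta)\wedge\exists y'(\langle x,y'\rangle\in\lvert f\rvert\wedge\gamma(y/y'))$; then pull both existentials to the front (by \ref{t:slidingexistence} and \ref{t:existentializationontheleft}), observe $\langle x,y\rangle\in\lvert f\rvert,\langle x,y'\rangle\in\lvert f\rvert\vdash_S y=y'$, substitute $y'$ by $y$ using the Equality axiom so that $\gamma(y/y')$ becomes $\gamma$, and reassemble to $\exists y(\langle x,y\rangle\in\lvert f\rvert\wedge\vartheta\wedge\gamma)$, which is $(\vartheta\wedge\gamma)^\natural_f$ again by \ref{L:varthetastarfequalto}. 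Finally invoke \ref{t:unrestrictedcut}/Equivalence to package the two directions into the stated equality.

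For the join, functoriality gives $\vartheta^\natural_f\vdash_S(\vartheta\vee\gamma)^\natural_f$ and $\gamma^\natural_f\vdash_S(\vartheta\vee\gamma)^\natural_f$ from $\vartheta\vdash_S\vartheta\vee\gamma$ and $\gamma\vdash_S\vartheta\vee\gamma$, hence $\vartheta^\natural_f\vee\gamma^\natural_f\vdash_S(\vartheta\vee\gamma)^\natural_f$ since $\vee$ is a coproduct in $F$. The converse direction is the only one needing care. Using \ref{L:varthetastarfequalto}, it suffices to show $\langle x,y\rangle\in\lvert f\rvert\wedge(\vartheta\vee\gamma)\vdash_S\vartheta^\natural_f\vee\gamma^\natural_f$ and then apply \ref{t:existentializationontheleft} (with the caveat that $y$ may need to be chosen not free in the target, or absorbed by \ref{C:superfluousvariablesforstar}). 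Since $\vee$ distributes over $\wedge$ in intuitionistic logic, $\langle x,y\rangle\in\lvert f\rvert\wedge(\vartheta\vee\gamma)\vdash_S(\langle x,y\rangle\in\lvert f\rvert\wedge\vartheta)\vee(\langle x,y\rangle\in\lvert f\rvert\wedge\gamma)$; then \ref{t:existentialintroduction} on each disjunct gives $\exists y(\langle x,y\rangle\in\lvert f\rvert\wedge\vartheta)\vee\exists y(\langle x,y\rangle\in\lvert f\rvert\wedge\gamma)$, i.e.\ $\vartheta^\natural_f\vee\gamma^\natural_f$ by \ref{L:varthetastarfequalto} and functoriality of $\vee$ for derivability. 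Equivalence then yields the stated equality.

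The main obstacle I anticipate is purely bookkeeping rather than conceptual: keeping the side conditions on the derived rules (freeness of $y$, $y'$ in the various formulae and in $\Gamma$) straight when renaming bound variables and when applying \ref{t:existentializationontheleft} and \ref{t:existsubstiright}, and making sure that the ``superfluous variable'' manoeuvre of \ref{C:superfluousvariablesforstar} is available exactly where the fixed-variable conventions of \ref{D:starfdef} would otherwise clash. The one genuinely non-formal point is the recombination step in the meet case, and it is licensed precisely by \ref{P:functionimpliesy=z} together with the Nullstellensatz-enabled \ref{t:unrestrictedcut}; everything else is the intuitionistic validity of the distributive laws already established in \cite{MR972257}.
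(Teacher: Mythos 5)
Your proposal is correct and follows essentially the same route as the paper: both directions of each identity are reduced via \ref{L:varthetastarfequalto} to the guarded existential form, the easy inclusions come from functoriality (\ref{T:starfisafunctor}), the meet converse uses a fresh variable $y'$, the uniqueness of the $f$-value and the Equality axiom to merge the two existentials, and the join converse uses intuitionistic distributivity followed by \ref{t:existentialintroduction} and \ref{t:existentializationontheleft}. The only differences are presentational (the paper derives the core sequent with explicit witnesses first and then existentializes on the left, rather than pulling the quantifiers out up front), so nothing further is needed.
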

\begin{proof}
From $\vartheta\wedge\gamma\vdash_S\vartheta$ and $\vartheta\wedge\gamma\vdash_S\gamma$, it follows that
\[(\vartheta\wedge\gamma)^\natural_f\vdash_S\vartheta^\natural_f\wedge\gamma^\natural_f.\]

Now, let $y,y'$ be variables such that $y'$ is free for $y$ in $\gamma$ and not free in $\gamma$, then
\[\langle x,y\rangle\in\lvert f\rvert\wedge\langle x,y'\rangle\in\lvert f\rvert\vdash_Sy'=y.\]
From this, by Equality, 
\[\langle x,y\rangle\in\lvert f\rvert\wedge\vartheta\wedge\langle x,y'\rangle\in\lvert f\rvert\wedge\gamma(y/y')\vdash_S\langle x,y\rangle\in\lvert f\rvert\wedge\vartheta\wedge\gamma.\]
Then, by \ref{t:existentialintroduction} and the cut rule,
\[\langle x,y\rangle\in\lvert f\rvert\wedge\vartheta\wedge\langle x,y'\rangle\in\lvert f\rvert\wedge\gamma(y/y')\vdash_S\exists y(\langle x,y\rangle\in\lvert f\rvert\wedge\vartheta\wedge\gamma).\]
Therefore, by \ref{t:existentializationontheleft},
\[\exists y(\langle x,y\rangle\in\lvert f\rvert\wedge\vartheta)\wedge\exists y'(\langle x,y'\rangle\in\lvert f\rvert\wedge\gamma(y/y'))\vdash_S\exists y(\langle x,y\rangle\in\lvert f\rvert\wedge\vartheta\wedge\gamma),\]
so that, by \ref{L:varthetastarfequalto},
\[\vartheta^\natural_f\wedge\gamma^\natural_f\vdash_S(\vartheta\wedge\gamma)^\natural_f.\]

In the case of $\vee$, from $\vartheta\vdash_S\vartheta\vee\gamma$ and $\gamma\vdash_S\vartheta\vee\gamma$, it follows that 
\[\vartheta^\natural_f\vee\gamma^\natural_f\vdash_S(\vartheta\vee\gamma)^\natural_f.\]

Now,
\begin{align}
\langle x,y\rangle\in\lvert f\rvert\wedge\vartheta &\vdash_S\exists y(\langle x,y\rangle\in\lvert f\rvert\wedge\vartheta)\notag\\
&\vdash_S\vartheta^\natural_f\notag\\
&\vdash_S\vartheta^\natural_f\vee\gamma^\natural_f.\notag
\end{align}
Similarly, $\langle x,y\rangle\in\lvert f\rvert\wedge\gamma\vdash_S\vartheta^\natural_f\vee\gamma^\natural_f$. Thus,
\[(\langle x,y\rangle\in\lvert f\rvert\wedge\vartheta)\vee(\langle x,y\rangle\in\lvert f\rvert\wedge\gamma)\vdash_S\vartheta^\natural_f\vee\gamma^\natural_f;\]
so that
\[\langle x,y\rangle\in\lvert f\rvert\wedge(\vartheta\vee\gamma)\vdash_S\vartheta^\natural_f\vee\gamma^\natural_f.\]
Therefore, by \ref{t:existentializationontheleft}  and \ref{L:varthetastarfequalto},
\[(\vartheta\vee\gamma)^\natural_f\vdash_S\vartheta^\natural_f\vee\gamma^\natural_f.\qedhere\]
\end{proof}

\begin{observacion}
Given the conditions in \ref{T:starfisafunctor}, it can be readily verified that
\[
(\vartheta\Rightarrow\gamma)^\natural_f\vdash_S\vartheta^\natural_f\Rightarrow\gamma^\natural_f\quad\text{and}\quad (\neg\vartheta)^\natural_f \vdash_S\neg\vartheta^\natural_f.
\]
\end{observacion}

\subsection{Universal parameterizations}\label{SS:universalparam}
The main purpose of this subsection is to prove the theorems in \ref{T:changingvariables} and \ref{T:starpreservinglogicaloperations}, which provide the logical framework for the set-theoretical parameterization of section \ref{s:representabilidad}. In particular, these parameterizations will be bijections from universal sets.

For a well-termed local set theory $S$ in a local language $\llocal$, there is functional representability from universals even if the codomain is not universal.

\begin{proposicion}\label{L:corestrictiontoX}
Let $S$ be a well-termed LST in a local language $\llocal$. Let $\typea$ and $\mathbf{B}$ be types. Let $f:U_\mathbf{B}\rightarrow X$ be an $S$-function with $X$ determined by a formula $\alpha$ with a unique free variable of type $\typea$. Then $f$ is the corestriction to $X$ of $(u\mapsto\tau_f)$ for some term $\tau_f$ in $\llocal$ and $\vdash_S\tau_f\in X$. In this case,  $\tau_f$ {\em represents $f$}.
\end{proposicion}
\begin{proof}
Consider the $S$-function $i_X\circ f$. Since $S$ is well-termed, $i_X\circ f=(u\mapsto\tau_f)$ for some term $\tau_f$. Thus, by \eqref{E:interpretationcorrespondstofunctions}, $\overbar{\llbracket\alpha\rrbracket_{K,x}}\circ f=\llbracket\tau_f\rrbracket_{K,u}$; wherefrom, by \eqref{E:relofmaximal}, 
\[\llbracket\alpha\rrbracket_{K,x}\circ\llbracket\tau_f\rrbracket_{K,u}=T_B.\]
This, together with \eqref{E:subst} and \ref{P:validityTrelation}, $\vDash_K\alpha(x/\tau_f)$. So that by \eqref{E:soundcompletenessforTS}, $\vdash_S\alpha(\tau_f)$; thus, 
\[\vdash_S\tau_f\in X.\]
Lastly, let $g:=(\lvert(u\mapsto\tau_f)\rvert,B,X)$. Since $i_X$ is monic and $i_X\circ g=(u\mapsto\tau_f)$, the conclusion follows.
\end{proof}

The following result provides the logical framework to the set-theoretical functional representability of Section \ref{s:representabilidad}.

\begin{teorema}\label{T:changingvariables}
Let $S$ be a well-termed local set theory in a local language $\llocal$. Let $\typea$  and $\mathbf{B}$ be types in $\llocal$. Let $f:U_{\mathbf{B}}\rightarrow X$ be a surjective $S$-function with $X$ determined by the formula $\alpha$ with unique variable of type $\typea$, and $\tau_f$ a term that represents $f$.  Let $\gamma$ and $\Gamma$ be a formula and a finite set of formulae in $\llocal$. Then, 
\[
\Gamma,x\in X\vdash_S\gamma\quad\text{ if and only if }\quad\Gamma(x/\tau_f)\vdash_S\gamma(x/\tau_f).
\]

Furthermore, if $f$ is injective, then, for every formula $\vartheta$, 
\[\vdash_S\vartheta=\vartheta^\natural_{f^{-1}}(x/\tau_f).\]
And in this case, 
\[
\Gamma\vdash_S\gamma\quad\text{if and only if}\quad\Gamma^\natural_{f^{-1}},x\in X\vdash_S\gamma^\natural_{f^{-1}}.
\]
\end{teorema}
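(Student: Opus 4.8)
The plan is to treat the three assertions in order, using \ref{L:correstrictiontoX} and the surjectivity/injectivity of $f$ as the main tools, together with the substitution lemma \ref{L:varthetastarfequalto} for the $\natural$-translates.

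For the first biconditional, I would argue as follows. Assume $\Gamma,x\in X\vdash_S\gamma$. Since $\tau_f$ represents $f$ we have $\vdash_S\tau_f\in X$, so applying the substitution rule with $x/\tau_f$ to the hypothesis (noting $\tau_f$ is free for $x$ since $\tau_f$ has only $u$ as a free variable, chosen fresh) and using $\vdash_S\tau_f\in X$ to discharge the antecedent $x\in X$ via the cut rule gives $\Gamma(x/\tau_f)\vdash_S\gamma(x/\tau_f)$. Conversely, assume $\Gamma(x/\tau_f)\vdash_S\gamma(x/\tau_f)$; here I would use that $f$ is surjective, so that for $x\in X$ there exists $u$ with $\tau_f$ having value $x$ — concretely, surjectivity of $f=(u\mapsto\tau_f)$ corrected to $X$ means $x\in X\vdash_S\exists u(x=\tau_f)$. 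Apply substitution $u/u$ trivially, then substitute to obtain, under the hypothesis $x=\tau_f$, that $\Gamma\vdash_S\gamma$ modulo $\Gamma=\Gamma(x/\tau_f)(x/\ldots)$; more carefully, from $x=\tau_f$ and Equality one rewrites $\Gamma(x/\tau_f)$ back to $\Gamma$ and $\gamma(x/\tau_f)$ back to $\gamma$, giving $x=\tau_f,\Gamma\vdash_S\gamma$, and then \ref{t:existentializationontheleft} on $\exists u(x=\tau_f)$ together with $x\in X\vdash_S\exists u(x=\tau_f)$ and a cut yields $\Gamma,x\in X\vdash_S\gamma$. The point requiring care is keeping track of which variables are free where, so that the substitution and cut rules apply legitimately; the Nullstellensatz is not needed here since $S$ is merely assumed well-termed, but freeness side conditions must be checked by hand.

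For the identity $\vdash_S\vartheta=\vartheta^\natural_{f^{-1}}(x/\tau_f)$ under the additional hypothesis that $f$ is injective (hence bijective), I would start from \ref{L:varthetastarfequalto} applied to $f^{-1}:X\to U_\mathbf{B}$, which gives $\vdash_S\vartheta^\natural_{f^{-1}}\Leftrightarrow\exists u(\langle x,u\rangle\in\lvert f^{-1}\rvert\wedge\vartheta)$, i.e. (rewriting the graph of the inverse) $\exists u(\langle u,x\rangle\in\lvert f\rvert\wedge\vartheta)$. Substituting $x/\tau_f$ and using that $\langle u,\tau_f\rangle\in\lvert f\rvert$ holds (since $\tau_f$ represents $f$, i.e. $\vdash_S\langle u,\tau_f\rangle\in\lvert f\rvert$), one direction gives $\vartheta\vdash_S\vartheta^\natural_{f^{-1}}(x/\tau_f)$ via \ref{t:existsubstiright}. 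For the converse, inside the existential $\exists u$ one has $\langle u,\tau_f\rangle\in\lvert f\rvert$ together with $\langle u,\tau_f\rangle\in\lvert f\rvert$ from the representability, and since $f$ is a function, $u$ is forced to be the unique preimage — but actually the cleaner route is: $\exists u(\langle u,\tau_f\rangle\in\lvert f\rvert\wedge\vartheta)$, and since $\vartheta$ here may still contain $u$ only if it was there originally; after substitution the body gives $\vartheta$ directly once we eliminate $u$ by \ref{t:existentializationontheleft}, because the conjunct $\langle u,\tau_f\rangle\in\lvert f\rvert$ is not needed to recover $\vartheta$. One must be careful that $\tau_f$ has only $u$ free and $\vartheta$ is an arbitrary formula whose free variables are disjoint from the bound $u$ in the definition of $(-)^\natural$; renaming bound variables as in the proof of \ref{L:varthetastarfequalto} handles this.

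The third biconditional follows by combining the first two. Given $\Gamma\vdash_S\gamma$, translate each formula: by the identity just proved, $\Gamma$ is provably equal (formula by formula) to $\Gamma^\natural_{f^{-1}}(x/\tau_f)$ and $\gamma$ to $\gamma^\natural_{f^{-1}}(x/\tau_f)$, so $\Gamma^\natural_{f^{-1}}(x/\tau_f)\vdash_S\gamma^\natural_{f^{-1}}(x/\tau_f)$; then apply the first biconditional (in the reverse direction) with $\Gamma^\natural_{f^{-1}}$ in place of $\Gamma$ and $\gamma^\natural_{f^{-1}}$ in place of $\gamma$, legitimate because by \ref{O:slicingoverxinX} each $\vartheta^\natural_{f^{-1}}\vdash_S x\in X$, to conclude $\Gamma^\natural_{f^{-1}},x\in X\vdash_S\gamma^\natural_{f^{-1}}$. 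Conversely, from $\Gamma^\natural_{f^{-1}},x\in X\vdash_S\gamma^\natural_{f^{-1}}$ the first biconditional gives $\Gamma^\natural_{f^{-1}}(x/\tau_f)\vdash_S\gamma^\natural_{f^{-1}}(x/\tau_f)$, and the identity rewrites this back to $\Gamma\vdash_S\gamma$. I expect the main obstacle to be the bookkeeping of free and bound variables when substituting $\tau_f$ (whose only free variable is $u$) into the syntactically intricate formula $\vartheta^\natural_{f^{-1}}$ built from an interpretation composite, and ensuring the freeness side conditions of \ref{t:existsubstiright} and \ref{t:existentializationontheleft} are met; the logical content, by contrast, is essentially the round-trip $f\circ f^{-1}=\mathrm{id}$ transported to the level of formulae.
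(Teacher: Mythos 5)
Your treatment of the first biconditional is a legitimate alternative to the paper's argument: where you stay entirely syntactic (internal surjectivity $x\in X\vdash_S\exists u(\langle u,x\rangle\in\lvert f\rvert)$, hence $x\in X\vdash_S\exists u(x=\tau_f)$, then Equality-rewriting and \ref{t:existentializationontheleft} plus cut), the paper instead works in $\topos(S)$, building pullback squares over $(\bm x\mapsto\xi\wedge\alpha)$, using that the pullback of the epic $1\times f$ is epic to produce an epic $Z'\to Z$, and concluding via \ref{L:subsetdiag}. Modulo the freeness bookkeeping you flag (and the harmless convention, also made in the paper, that the representing variable $u$ does not occur in $\Gamma,\gamma$), that part of your proposal goes through.

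The genuine gap is in your proof of $\vartheta^\natural_{f^{-1}}(x/\tau_f)\vdash_S\vartheta$. After substituting $x/\tau_f$ into $\exists y$-form given by \ref{L:varthetastarfequalto}, the bound variable must be renamed (your own remark), so the premise reads $\exists u'\bigl(\langle\tau_f,u'\rangle\in\lvert f^{-1}\rvert\wedge\vartheta(u/u')\bigr)$ with $u'\neq u$. Your ``cleaner route'' --- discard the graph conjunct and apply \ref{t:existentializationontheleft} --- is not available: that rule requires the quantified variable not to occur free in the conclusion, and here the conclusion is $\vartheta$ with $u$ free (the only interesting case, and exactly the one needed for the third biconditional and for \ref{T:starpreservinglogicaloperations}); moreover the entailment $\exists u'(\delta\wedge\vartheta(u/u'))\vdash_S\vartheta$ is simply false in general, since nothing ties $u'$ to $u$ once $\delta$ is dropped. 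The conjunct is precisely what ties them: from $\vdash_S\langle\tau_f,u\rangle\in\lvert f^{-1}\rvert$ (representability) and single-valuedness of $f^{-1}$ --- i.e.\ injectivity of $f$, the hypothesis your ``cleaner route'' never uses, which is itself a warning sign --- one gets $\langle\tau_f,u'\rangle\in\lvert f^{-1}\rvert\vdash_S u'=u$, then Equality recovers $\vartheta$ from $\vartheta(u/u')$, and only then may \ref{t:existentializationontheleft} be applied to $u'$. So the argument you sketched first and abandoned (``$u$ is forced to be the unique preimage'', with ``$f^{-1}$ is single-valued'' rather than ``$f$ is a function'' as the reason) is the one that works; with that repair, your derivation of the third biconditional from the first two is the same reduction the paper makes, whereas the paper proves the identity itself by another pullback diagram giving $N_{\mathfrak X}\cong\mathfrak X$.
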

\begin{observacion}
In the presence of the Nullstellensatz \ref{A:Nullstellensatz}, the functor $(-)^\natural_{f^{-1}}$ in this theorem shows something more than merely $\Sub(U_\mathbf{B})\cong\Sub(X)$, which is equivalent to $F[y]\cong(F[x]\downarrow x\in X)$. It shows that 
\[
F\cong(F\downarrow x\in X).
\]
On the other hand, as long as $S$ is consistent, $\topos(S)$ is not reducible to $F$, since $(F\downarrow x\in X)$ is never equivalent to $(\topos(S)\downarrow X)$ (See \ref{O:forallexistsadjoints}).
\end{observacion}

\begin{observacion} The operation $\xi \mapsto \xi(x/\tau)$ is functorial (like $(-)^\natural_{f^{-1}}$) provided one considers equivalence classes of formulae---i.e. in order to be able to use the substitution inference rule.
\end{observacion}

\begin{lema}\label{L:subsetdiag}
Let $S$ be a local set theory in a local language $\llocal$. Let $\xi,\gamma$ be formulae in $\llocal$. Let $x_1,\ldots,x_n$ be variables of types $\typea_1,\ldots,\typea_n$, containing the free variables of $\xi$ and $\gamma$. Let
\[Z:=\{\langle x_1,\ldots,x_n\rangle:\xi\}.\]
Then, $\xi\vdash_S\gamma$ if and only if the following diagram commutes in $\topos(S)$:
\[\xymatrix{
Z\ar[r]\ar[d]_{i_{Z}} & 1\ar[d]^\top \\
A_1\times\cdots\times A_n\ar[r]_(.7){(\bm{x}\mapsto\gamma)} & \subob.
}\]
\end{lema}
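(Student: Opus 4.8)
The plan is to unwind both the definition of $Z$ as a subobject-via-comprehension and the definition of the transferred order $\subseteq$ on $\topos(S)(A_1\times\cdots\times A_n,\subob)$, and to reconcile them using Lemma~\ref{L:Xisalphatoo} together with the soundness-and-completeness relation~\eqref{E:soundcompletenessforTS} for the canonical interpretation. First I would invoke Lemma~\ref{L:Xisalphatoo} (in its $n$-variable incarnation, or after collapsing $x_1,\ldots,x_n$ into a single variable of product type) to obtain that the square
\[\xymatrix{
Z\ar[r]\ar[d]_{i_Z} & 1\ar[d]^\top \\
A_1\times\cdots\times A_n\ar[r]_(.7){(\bm{x}\mapsto\xi)} & \subob
}\]
is a pullback, so that $\chi(i_Z)=(\bm{x}\mapsto\xi)=\llbracket\xi\rrbracket_{K,\bm{x}}$ and $\overbar{\llbracket\xi\rrbracket_{K,\bm{x}}}=i_Z$ (up to the chosen representative of the subobject).

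Next I would translate the conclusion ``the displayed square commutes'' into order-theoretic language. By \eqref{E:relofmaximal}, the square with bottom arrow $(\bm{x}\mapsto\gamma)$ and left arrow $i_Z$ commutes---i.e. $(\bm{x}\mapsto\gamma)\circ i_Z = T_Z$---if and only if there is a map $g\colon Z\to\dom(\overbar{(\bm{x}\mapsto\gamma)})$ with $i_Z=\overbar{(\bm{x}\mapsto\gamma)}\circ g$; by the discussion around \eqref{E:transferedorder} this is exactly the statement $\overbar{\llbracket\xi\rrbracket_{K,\bm{x}}}\subseteq\overbar{\llbracket\gamma\rrbracket_{K,\bm{x}}}$, i.e. $\llbracket\xi\rrbracket_{K,\bm{x}}\le\llbracket\gamma\rrbracket_{K,\bm{x}}$. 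Now by the definition of $\vDash$ in \eqref{E:validsequent} and the canonical-interpretation completeness \eqref{E:soundcompletenessforTS} (applied with $\Gamma=\{\xi\}$), $\llbracket\xi\rrbracket_{K,\bm{x}}\le\llbracket\gamma\rrbracket_{K,\bm{x}}$ holds if and only if $\xi\vDash_{K_S}\gamma$ if and only if $\xi\vdash_S\gamma$. Chaining these equivalences gives the claim. Alternatively, and perhaps more cleanly, one can use Theorem~\ref{P:validityTrelation}: $\xi\vDash_{K_S}\gamma$ iff $\llbracket\gamma\rrbracket_{K,\bm{x}}\circ\overbar{\llbracket\xi\rrbracket_{K,\bm{x}}}=T$, and since $\overbar{\llbracket\xi\rrbracket_{K,\bm{x}}}=i_Z$ and $T_Z=T$, the right-hand side is precisely the commutativity of the displayed triangle/square---so the whole lemma collapses to a single application of \ref{P:validityTrelation} plus \ref{L:Xisalphatoo} plus \eqref{E:soundcompletenessforTS}.

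The main obstacle, such as it is, is bookkeeping rather than substance: one must be careful that the ``$1$'' and the top map in the displayed square of the lemma are forced (the unique map $Z\to 1$ composed with $\top$) so that commutativity of the square is genuinely equivalent to $(\bm{x}\mapsto\gamma)\circ i_Z=T_Z$, and one must make sure the subobject representative of $Z$ used here agrees with $\overbar{\llbracket\xi\rrbracket_{K,\bm{x}}}$---this is exactly what Lemma~\ref{L:Xisalphatoo} secures, since a pullback of $\top$ along $\llbracket\xi\rrbracket$ represents $\overbar{\llbracket\xi\rrbracket}$. A secondary point is the passage between the many-variable list $x_1,\ldots,x_n$ and a single product-type variable; this is handled by the Products basic axioms exactly as in the interpretation of $\langle\tau_1,\ldots,\tau_n\rangle$, and $\llbracket\xi\rrbracket_{K,\bm{x}}=(\bm{x}\mapsto\xi)$ by \eqref{E:interpretationcorrespondstofunctions}, so no essential difficulty arises. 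I would therefore write the proof as: cite \ref{L:Xisalphatoo} to identify $i_Z$ with $\overbar{\llbracket\xi\rrbracket_{K,\bm{x}}}$; observe commutativity of the square $\iff\llbracket\gamma\rrbracket_{K,\bm{x}}\circ\overbar{\llbracket\xi\rrbracket_{K,\bm{x}}}=T$; apply \ref{P:validityTrelation} to rewrite this as $\xi\vDash_{K_S}\gamma$; and conclude by \eqref{E:soundcompletenessforTS}.
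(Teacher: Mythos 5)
Your proposal is correct and follows essentially the same route as the paper: the paper's proof is exactly the chain $\xi\vdash_S\gamma$ iff $\xi\vDash_K\gamma$ (by \eqref{E:soundcompletenessforTS}) iff $\llbracket\xi\rrbracket_{K,\bm{x}}\leq\llbracket\gamma\rrbracket_{K,\bm{x}}$ (by \eqref{E:validsequent}) iff there is $f:Z\rightarrow W$ with $i_Z=i_W\circ f$ (by \eqref{E:transferedorder}, with $W:=\{\langle x_1,\ldots,x_n\rangle:\gamma\}$ playing the role of $\dom(\overbar{(\bm{x}\mapsto\gamma)})$ via \ref{L:Xisalphatoo}), which is then read off as commutativity of the square just as you do via \eqref{E:relofmaximal}/\ref{P:validityTrelation}. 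Your ``alternative'' one-step use of \ref{P:validityTrelation} is merely a slightly more compact packaging of the same argument.
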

\begin{proof}
Let $W:=\{\langle x_1,\ldots,x_n\rangle:\gamma\}$. Then,
\begin{align}
\xi\vdash_S &\gamma\notag\\
&\text{iff}\quad \xi\vDash_K\gamma &\text{(\eqref{E:soundcompletenessforTS})}\notag\\
&\text{iff}\quad\llbracket\xi\rrbracket_{K,\bm{x}}\leq\llbracket\gamma\rrbracket_{K,\bm{x}} &\text{(\eqref{E:validsequent})}\notag\\
&\text{iff}\quad\text{there is an arrow } f:Z\rightarrow W\text{ such that }i_{Z}=i_{W}\circ f &\text{(\eqref{E:transferedorder})}\notag
\end{align}
which completes the proof.
\end{proof}
\begin{proof}[Proof of \ref{T:changingvariables}]
Without loss of generality, assume $\Gamma=\{\xi\}$ and that $u$, of type $\mathbf{B}$, is neither in  $\xi$ nor in $\gamma$. Replace an empty $\Gamma$ with $\{verus\}$.

Assume that $\xi,x\in X\vdash_S\gamma$. By substitution, 
\[\xi(x/\tau_f),\tau_f\in X\vdash_S\gamma(\tau_f).\]
By the cut rule with $\vdash_S\tau_f\in X$ ---by \ref{L:corestrictiontoX}---,
\[\xi(x/\tau_f)\vdash_S\gamma(\tau_f).\]

Conversely, assume $\xi(x/\tau_f)\vdash_S\gamma(\tau_f)$. Let $x_1,\ldots, x_n$ be variables of types $\typea_1,\ldots,\typea_n$, resp., containing the free variables of $\xi$ and $\alpha$, with $x_n=x$.  Let 
\[Z:=\{\langle x_1,\ldots,x_n\rangle:\xi\wedge\alpha\}\quad\text{and}\quad Z':=\{\langle x_1,\ldots,x_{n-1},u\rangle:\xi(x/\tau_f)\}.\]
Thus, the following are pullback diagrams:
\[
\xymatrix{
Z'\ar[rr]\ar[d]_{i_{Z'}} & & 1\ar[d]^\top &
Z\ar[rr]\ar[d]_{i_Z} & & 1\ar[d]^\top \\
A_1\times\cdots\times A_{n-1}\times B\ar[rr]_(.7){(\bm{x'}\mapsto\xi(x/\tau_f))} & & \subob &
A_1\times\cdots\times A_n\ar[rr]_(.6){(\bm{x}\mapsto\xi\wedge\alpha)} & & \subob,
}
\]
with $i_{Z'}=(\bm{x'}\mapsto\bm{x'})$ and $i_Z=(\bm{x}\mapsto\bm{x})$.

Now, since
\begin{align}
\langle x_1,\ldots,x_{n-1},x\rangle\in Z &\vdash_S\xi\wedge\alpha\vdash_S\alpha\vdash_Sx\in X\notag,
\end{align}
let $j_Z:=(\lvert i_Z\rvert,Z,A_1\times\cdots\times X)$. Then $i_Z=1\times i_X\circ j_Z$, and the diagram
\[\xymatrix{
Z\ar[rrr]\ar[d]_{j_Z} & & & 1\ar[d]^\top \\
A_1\times\cdots\times A_{n-1}\times X\ar[r]_(.6){1\times i_X} & A_1\times\cdots\times A_n\ar[rr]_(.63){(\bm{x}\mapsto\xi\wedge\alpha)} & & \subob
}\]
is a pullback diagram too. So is the following:
\[\xymatrix{
Z'\ar[rrrr]\ar[d]_{i_{Z'}} & & & & 1\ar[d]^\top \\
A_1\times\cdots\times B\ar[r]^{1\times f}\ar@/_1pc/[rr]_{1\times(u\mapsto\tau_f)}\ar@/_3pc/[rrrr]_{(\bm{x'}\mapsto\xi(x/\tau_f))} & A_1\times\cdots\times X\ar[r]^{1\times i_X} & A_1\times\cdots\times A_n\ar[rr]^(.63){(\bm{x}\mapsto\xi\wedge\alpha)} & & \subob.
}\]

Consider now the following diagram:

\[\xymatrix{
Z'\ar@/^2pc/[rrrr]\ar@{.>}[r]^(.6){f'}\ar[d]_{i_{Z'}} & Z\ar[d]^{j_Z}\ar[rrr] & & & 1\ar[d]^\top \\
A_1\times\cdots\times B\ar[r]_{1\times f} & A_1\times\cdots\times X\ar[r]_{1\times i_X} & A_1\times\cdots\times A_n\ar[rr]_(.63){(\bm{x}\mapsto\xi\wedge\alpha)} & & \subob.
}\]
Since the right square is a pullback and the exterior rectangle commutes, there is a unique $f':Z'\rightarrow Z$ making the diagram commute. Since the exterior diagram is a pullback, the left square is too. Now, since $f$ is epic, $f$ is surjective; thus, so is $1\times f$ and thus also epic. Therefore, $f'$ is epic since in a topos the pullback of an epic is epic.

Now consider the following diagram:

\[\xymatrix{
Z'\ar[r]^(.6){f'}\ar[d]_{i_{Z'}} & Z\ar[d]^{j_Z}\ar[rrr] & & & 1\ar[d]^\top \\
A_1\times\cdots\times B\ar[r]^{1\times f}\ar@/_1.5pc/[rrrr]_{(\bm{x'}\mapsto\gamma(x/\tau_f))} & A_1\times\cdots\times X\ar[r]^{1\times i_X} & A_1\times\cdots\times A_n\ar[rr]^(.63){(\bm{x}\mapsto\gamma)} & & \subob.
}\]
Since $\xi(x/\tau_f)\vdash_S\gamma(x/\tau_f)$, the external diagram commutes by \ref{L:subsetdiag}. Since the left square commutes, 
\[
(\bm{x}\mapsto\gamma)\circ i_Z\circ f'=\top\circ !_Z\circ f'.
\]
Since $f'$ is epic, the right square commutes; therefore by \ref{L:subsetdiag}, $\xi\wedge\alpha\vdash_S\gamma$ and thus  $\xi,x\in X\vdash_S\gamma$.

Lastly, suppose that $f$ is furthermore monic. Let $\vartheta$ be a formula the free variables of which are within $x_1,\ldots,x_{n-1},u$. Define
\begin{align}
\mathfrak{X} &:=\{\langle x_1,\ldots,x_{n-1},u\rangle:\vartheta\}\notag\\
N_{\mathfrak{X}} &:=\{\langle x_1,\ldots,x_{n-1},x\rangle:\vartheta^\natural\}.\notag
\end{align}
Recall that $\llbracket\vartheta\rrbracket_{K,x_1,\ldots,x_{n-1},u}=(\langle x_1,\ldots,x_{n-1},u\rangle\mapsto\vartheta)$ by \eqref{E:interpretationcorrespondstofunctions}, and that by \ref{P:omegafunctions}, $(\bm{x'}\mapsto\vartheta)\circ 1\times f^{-1}=(\bm{x}\mapsto\vartheta^\natural_{f^{-1}})$. Consider the following diagram:

\[\xymatrix{
N_{\mathfrak{X}}\ar@{-->}[r]^{g}\ar[d]_{j_{N_{\mathfrak{X}}}}\ar@/^1.5pc/[rr] & \mathfrak{X}\ar[r]\ar[d]^{i_{\mathfrak{X}}} & 1\ar[d]^\top \\
A_1\times\cdots\times X\ar[r]_(.52){1\times f^{-1}} & A_1\times\cdots\times B\ar[r]_(.7){(\bm{x'}\mapsto\vartheta)} & \subob.
}\]

Again, since the right square is a pullback and the exterior diagram commutes, there is a unique $g$ making the whole diagram commute. Since the rectangle is a pullback (the external diagram was a pullback already), the left square is too, so that 
\[
N_{\mathfrak{X}}\cong\mathfrak{X}.
\]

Now, considering the following commutative diagram,
\[\xymatrix{
\mathfrak{X}\ar@{-->}[r]\ar[d]_{i_{\mathfrak{X}}} & N_{\mathfrak{X}}\ar[r]^{g}\ar[d]_{j_{N_{\mathfrak{X}}}} & \mathfrak{X}\ar[r]\ar[d]^{i_{\mathfrak{X}}} & 1\ar[d]^\top \\
A_1\times\cdots\times B\ar[r]^(.52){1\times f} & A_1\times\cdots\times X\ar[r]^(.52){1\times f^{-1}}\ar@/_1.5pc/[rr]_{(\bm{x}\mapsto\vartheta^\natural)} & A_1\times\cdots\times B\ar[r]^(.7){(\bm{x'}\mapsto\vartheta)} & \subob,
}
\]
one clearly obtains $\vdash_S\vartheta^\natural(\tau_f)=\vartheta$, since $f$ is the corestriction to $X$ of $(u\mapsto\tau_f)$.  From here, $\xi\vdash_S\gamma$ if and only if $\xi^\natural,x\in X\vdash_S\gamma^\natural$, since $\xi^\natural(\tau_f)\vdash_S\gamma^\natural(\tau_f)$ if and only if $\xi^\natural,x\in X\vdash_S\gamma^\natural$, by the first part of the theorem.
\end{proof}

\begin{teorema}\label{T:starpreservinglogicaloperations}
Let  $S$ be a well-termed local set theory in a local language $\llocal$. Let $f:U_\mathbf{B}\rightarrow X$  be an $S$-bijection, with $X$ determined by a formula $\alpha$ with unique variable of type $\typea$. Let $\tau_f$ be a term representing $f$ and let $\vartheta$ and $\gamma$ be formulae in $\llocal$. Then, 
\[
\vdash_S(\vartheta\square\gamma)^\natural_{f^{-1}}=\vartheta^\natural_{f^{-1}}\square\gamma^\natural_{f^{-1}},
\]
where $\square$ is any binary logical connective. Furthermore, 
\[
\vdash_S(\neg\vartheta)^\natural_{f^{-1}}=\neg\vartheta^\natural_{f^{-1}},
\]
and if $u$ is free in $\vartheta$ but $x$ is not, then 
\[
\vdash_S\forall u\vartheta=\forall x\in X.\vartheta^\natural_{f^{-1}}\text{ and }\vdash_S\exists u\vartheta=\exists x\in X.\vartheta^\natural_{f^{-1}}.
\] 
\end{teorema}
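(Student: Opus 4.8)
The plan is to leverage the machinery already built, especially Lemma \ref{L:varthetastarfequalto}, Theorem \ref{T:starfipreserveswedgenvee}, and the two identities $\vdash_S\vartheta=\vartheta^\natural_{f^{-1}}(x/\tau_f)$ and $\vdash_S\vartheta^\natural_{f^{-1}}(x/\tau_f)=\vartheta$ from Theorem \ref{T:changingvariables}. Note that since $f:U_\mathbf{B}\to X$ is a bijection, $f^{-1}:X\to U_\mathbf{B}$ is an $S$-function, $\langle x,u\rangle\in\lvert f^{-1}\rvert$ is equivalent to $\langle u,x\rangle\in\lvert f\rvert$, and $f^{-1}$ is itself surjective and injective, so the hypotheses of \ref{T:changingvariables} apply with the roles of the sets swapped: every formula $\vartheta$ with free variable among $\dots,u$ satisfies $\vdash_S\vartheta^\natural_{f^{-1}}(x/\tau_f)\Leftrightarrow\vartheta$ (in fact equality), and conversely $\xi\vdash_S\gamma$ iff $\xi^\natural_{f^{-1}},x\in X\vdash_S\gamma^\natural_{f^{-1}}$.

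For the binary connectives $\wedge$ and $\vee$, these are already covered verbatim by \ref{T:starfipreserveswedgenvee} applied to the $S$-function $f^{-1}$, so nothing new is needed there. For $\Rightarrow$ (and hence $\Leftrightarrow$, which is $=$ on formulae and can be handled by antisymmetry once $\Rightarrow$ is done, or directly), I would argue as follows. One inclusion, $(\vartheta\Rightarrow\gamma)^\natural_{f^{-1}}\vdash_S\vartheta^\natural_{f^{-1}}\Rightarrow\gamma^\natural_{f^{-1}}$, is the Remark following \ref{T:starfipreserveswedgenvee}. For the reverse, use the substitution/conservativity bridge: by \ref{t:implicationinhypotheses}(ii), $\vartheta^\natural_{f^{-1}}\Rightarrow\gamma^\natural_{f^{-1}},\vartheta^\natural_{f^{-1}}\vdash_S\gamma^\natural_{f^{-1}}$; then apply the substitution rule $x/\tau_f$ and cut with $\vdash_S\tau_f\in X$ using \ref{L:correstrictiontoX}, and invoke $\vdash_S\psi^\natural_{f^{-1}}(x/\tau_f)=\psi$ for $\psi\in\{\vartheta,\gamma\}$ (and for $(\vartheta^\natural_{f^{-1}}\Rightarrow\gamma^\natural_{f^{-1}})(x/\tau_f)$, which substitution distributes over) to land at $(\vartheta^\natural_{f^{-1}}\Rightarrow\gamma^\natural_{f^{-1}})(x/\tau_f),\vartheta\vdash_S\gamma$, hence by \ref{t:implicationinhypotheses}-type reasoning $(\vartheta^\natural_{f^{-1}}\Rightarrow\gamma^\natural_{f^{-1}})(x/\tau_f)\vdash_S\vartheta\Rightarrow\gamma$; finally push this back through $(-)^\natural_{f^{-1}}$ using the ``iff'' in the last display of \ref{T:changingvariables} to get $\vartheta^\natural_{f^{-1}}\Rightarrow\gamma^\natural_{f^{-1}},x\in X\vdash_S(\vartheta\Rightarrow\gamma)^\natural_{f^{-1}}$, and discharge the redundant $x\in X$ since by \ref{O:slicingoverxinX} $(\vartheta\Rightarrow\gamma)^\natural_{f^{-1}}\vdash_S x\in X$ automatically and $\vartheta^\natural_{f^{-1}}\vdash_S x\in X$ as well. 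Negation is the special case $\gamma=false$ together with the observation that $false^\natural_{f^{-1}}$ reduces (via \ref{L:varthetastarfequalto} and \ref{C:starandconstants}) to $false\wedge x\in X$, i.e. $false$ itself, so the correction term vanishes; alternatively cite the Remark's half and close the gap the same way.

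For the quantifiers, assume $u$ is free in $\vartheta$ but $x$ is not. I would first unwind the right-hand side: by \ref{L:varthetastarfequalto}, $\vartheta^\natural_{f^{-1}}$ is equivalent to $\exists u(\langle x,u\rangle\in\lvert f^{-1}\rvert\wedge\vartheta)$, i.e. to $\exists u(\langle u,x\rangle\in\lvert f\rvert\wedge\vartheta)$; since $f$ is a bijection, for fixed $x\in X$ there is a unique such $u$, namely the $u$ with $\tau_f(u)$... more precisely with $x=\tau_f(x/?)$—cleanest is to use that $\langle u,x\rangle\in\lvert f\rvert\vdash_S x=\tau_f(u)$ does not quite typecheck, so instead use the graph description of $f$ directly: $\langle u,x\rangle\in\lvert f\rvert$ is equivalent to $x\in X\wedge x=\tau_f$ up to renaming is not right either; the robust route is purely formal. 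For $\exists$: $(\exists u\vartheta)^\natural_{f^{-1}}\Leftrightarrow\exists u'(\langle x,u'\rangle\in\lvert f^{-1}\rvert\wedge\exists u\vartheta)$ by \ref{L:varthetastarfequalto}; since $u$ is not free in $\langle x,u'\rangle\in\lvert f^{-1}\rvert$, slide the existential out by \ref{t:slidingexistence} (after commuting the conjunction) to get $\exists u\exists u'(\langle x,u'\rangle\in\lvert f^{-1}\rvert\wedge\vartheta)$; on the other hand $\exists x\in X.\vartheta^\natural_{f^{-1}}$ unwinds to $\exists x(x\in X\wedge\exists u'(\langle x,u'\rangle\in\lvert f^{-1}\rvert\wedge\vartheta))$—but here $x$ \emph{is} being bound, whereas on the left $x$ is free; this is exactly the ``change of variables'' point, and the correct reading (from the analogous statements for $\subseteq$ and the pullback squares in the proof of \ref{T:changingvariables}) is that $\vdash_S\exists u\vartheta\Leftrightarrow(\exists x\in X.\vartheta^\natural_{f^{-1}})(x\text{ fresh})$—so I would instead prove the two sequents $\exists u\vartheta\vdash_S\exists x(x\in X\wedge\vartheta^\natural_{f^{-1}})$ and its converse by: forward, from $\vartheta\vdash_S\vartheta^\natural_{f^{-1}}(x/\tau_f)$ (Theorem \ref{T:changingvariables}) and $\vdash_S\tau_f\in X$, conclude $\vartheta\vdash_S\tau_f\in X\wedge\vartheta^\natural_{f^{-1}}(x/\tau_f)$, then by \ref{t:existsubstiright} $\vartheta\vdash_S\exists x(x\in X\wedge\vartheta^\natural_{f^{-1}})$, then by \ref{t:existentializationontheleft} (as $u$, being free in $\vartheta$ only, can be absorbed) $\exists u\vartheta\vdash_S\exists x(x\in X\wedge\vartheta^\natural_{f^{-1}})$; backward, from $x\in X\wedge\vartheta^\natural_{f^{-1}}\vdash_S\vartheta^\natural_{f^{-1}}\vdash_S\exists u(\langle x,u\rangle\in\lvert f^{-1}\rvert\wedge\vartheta)\vdash_S\exists u\vartheta$ by \ref{L:varthetastarfequalto} and weakening, then \ref{t:existentializationontheleft} in $x$. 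The universal case is dual, using the right-adjoint functor $\FE_{f^{-1}}$ of \ref{T:starfisafunctor} in place of $\exists$, or more directly by the same substitution-and-conservativity pattern applied to $\forall u\vartheta\vdash_S\gamma$ and \ref{t:universalizationontheright}/\ref{t:universalelimination} (valid without the freeness proviso here by the Nullstellensatz; note however that \ref{T:starpreservinglogicaloperations} does not assume the Nullstellensatz, so I would instead route $\forall$ through the adjunction $(-)^\times_{f^{-1}}\dashv\FE_{f^{-1}}$ only when it is available and otherwise give the direct syntactic argument).

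The main obstacle I anticipate is bookkeeping around the asymmetry in the quantifier clauses: on the left $x$ is \emph{free}, on the right it is \emph{bound} by $\exists x\in X$ / $\forall x\in X$, so the claimed equalities are between formulae with different free-variable sets and must be read modulo the implicit renaming supplied by $\tau_f$—getting the variable conventions exactly right (which copy of which variable is free for which, that $u$ does not occur in $\lvert f\rvert$'s graph after the substitutions, that $\tau_f$ is free for $x$ throughout) is where the care is needed, and it is precisely the content that distinguishes Theorem \ref{T:starpreservinglogicaloperations} from the weaker, fully symmetric \ref{T:starfipreserveswedgenvee}. Everything else is a routine application of \ref{L:varthetastarfequalto}, \ref{t:slidingexistence}, \ref{t:existentializationontheleft}, \ref{t:existsubstiright}, and the substitution rule, combined with the two representability identities from \ref{T:changingvariables}.
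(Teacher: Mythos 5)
Your handling of the quantifier clauses is essentially the paper's own argument (substitute $\tau_f$, use the identities of \ref{T:changingvariables}, unwind with \ref{L:varthetastarfequalto}, \ref{C:starandconstants}, \ref{t:existsubstiright}, \ref{t:existentializationontheleft}), and it goes through; note only that routing $\wedge,\vee$ through \ref{T:starfipreserveswedgenvee} and $\forall$ through the adjunction of \ref{T:starfisafunctor} imports the Nullstellensatz, which \ref{T:starpreservinglogicaloperations} does not assume---you flag this for $\forall$ but not for $\wedge,\vee$, whereas the paper proves these cases directly without it.

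The genuine gap is the step where, for $\square=\Rightarrow$ (hence also for $\Leftrightarrow$ and $\neg$), you ``discharge the redundant $x\in X$'' from $\vartheta^\natural_{f^{-1}}\Rightarrow\gamma^\natural_{f^{-1}},\,x\in X\vdash_S(\vartheta\Rightarrow\gamma)^\natural_{f^{-1}}$ on the grounds that $\vartheta^\natural_{f^{-1}}\vdash_S x\in X$. That justification fails: the hypothesis is the implication, not its antecedent, and an implication does not entail its antecedent; by \ref{O:slicingoverxinX} the conclusion $(\vartheta\Rightarrow\gamma)^\natural_{f^{-1}}$ does entail $x\in X$, while $\vartheta^\natural_{f^{-1}}\Rightarrow\gamma^\natural_{f^{-1}}$ holds vacuously wherever $\vartheta^\natural_{f^{-1}}$ fails, so no syntactic weakening can remove $x\in X$ from the context; what your argument actually yields is the relativized equivalence $x\in X\vdash_S(\vartheta\Rightarrow\gamma)^\natural_{f^{-1}}\Leftrightarrow(\vartheta^\natural_{f^{-1}}\Rightarrow\gamma^\natural_{f^{-1}})$. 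This is exactly where the paper argues differently: from $\vdash_S(\vartheta\square\gamma)^\natural(\tau_f)=\vartheta^\natural(\tau_f)\square\gamma^\natural(\tau_f)$ it passes to the equality of composites \eqref{E:equalitythroughf} and cancels the isomorphism $1\times f$ to equate the arrows $(\bm{x}\mapsto(\vartheta\square\gamma)^\natural)$ and $(\bm{x}\mapsto\vartheta^\natural\square\gamma^\natural)$, then compares the associated comprehension objects; that cancellation-of-an-iso device is the idea your attempt lacks. (Be aware, though, that even in the paper's route the equated arrows are defined on $A_1\times\cdots\times X$ while the comprehensions $N$ live over $A_1\times\cdots\times A$, so the final identification again uses that both formulae entail $x\in X$---automatic for $\wedge,\vee$, but not for $\Rightarrow$ or $\neg$; for those connectives only the $x\in X$-relative form of the identity is actually reachable by either argument, so your obstacle is not a mere bookkeeping nuisance.) A secondary, patchable point: ``pushing back through $(-)^\natural_{f^{-1}}$ via the iff'' as you phrase it needs the other composite identity, relating $(\psi(x/\tau_f))^\natural_{f^{-1}}$ to $\psi$ for $\psi$ in the variable $x$, which \ref{T:changingvariables} does not literally provide; the cleaner bridge is to combine $\vdash_S(\vartheta\Rightarrow\gamma)=(\vartheta\Rightarrow\gamma)^\natural_{f^{-1}}(x/\tau_f)$ with the first equivalence of \ref{T:changingvariables}.
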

\begin{proof}
By the previous theorem,
\[\vdash_S(\vartheta\square\gamma)^\natural(\tau_f)=\vartheta\square\gamma,\quad\vdash_S\vartheta^\natural(\tau_f)=\vartheta, \quad\vdash_S\gamma^\natural(\tau_f)=\gamma.\]
Wherefrom,
\[\vdash_S(\vartheta\square\gamma)^\natural(\tau_f)=\vartheta^\natural(\tau_f)\square\gamma^\natural(\tau_f).\]

Now, by \ref{L:corestrictiontoX}, $f=(\lvert(u\mapsto\tau_f)\rvert,B,X)$. So that
\begin{equation}\label{E:equalitythroughf}
(\bm{x}\mapsto(\vartheta\square\gamma)^\natural)\circ 1\times f=(\bm{x}\mapsto\vartheta^\natural\square\gamma^\natural)\circ 1\times f;
\end{equation}
thus since $f$ is iso, $1\times f$ is too. Therefore,
\[(\bm{x}\mapsto(\vartheta\square\gamma)^\natural)\text{ is equal to }(\bm{x}\mapsto\vartheta^\natural\square\gamma^\natural).\]
Notice that both functions go from $A_1\times\cdots\times X$ to $\subob$; however,
\begin{align}
N_{(\vartheta\square\gamma)^\natural} &:=\{\langle x_1,\ldots,x\rangle:(\vartheta\square\gamma)^\natural\},\notag\\
N_{\vartheta^\natural\square\gamma^\natural} &:=\{\langle x_1,\ldots,x\rangle:\vartheta^\natural\square\gamma^\natural\}\notag
\end{align}
are each the pullback object of the $S$-functions $(\bm{x}\mapsto(\vartheta\square\gamma)^\natural)$ and $(\bm{x}\mapsto\vartheta^\natural\square\gamma^\natural)$ from $A_1\times\cdots\times A$ to $\subob$, so that 
\[\vdash_S(\vartheta\square\gamma)^\natural=\vartheta^\natural\square\gamma^\natural.\]
Likewise, $\vdash_S(\neg\vartheta)^\natural=\neg\vartheta^\natural$.

Lastly, assume that $u$ is free in $\vartheta$ and $x$ is not.  Then, by \ref{t:universalelimination}, $\forall u\vartheta\vdash_S\vartheta$. Thus, by \ref{T:changingvariables} and \ref{O:slicingoverxinX}, $(\forall u\vartheta)^\natural\vdash_S\vartheta^\natural$; so that by \ref{C:starandconstants} $\vdash_S(\forall u\vartheta)^\natural=\forall u\vartheta\wedge x\in X$. Therefore, 
\[
\forall u\vartheta\wedge x\in X\vdash_S\vartheta^\natural.
\]
From here, $\forall u\vartheta\vdash_Sx\in X\Rightarrow\vartheta^\natural$, and thus, by \ref{t:universalizationontheright},
\[
\forall u\vartheta\vdash_S\forall x\in X.\vartheta^\natural.
\]

Conversely,  by \ref{t:universalelimination} $\forall x\in X.\vartheta^\natural\vdash_Sx\in X\Rightarrow\vartheta^\natural$. From here, $x\in X,\forall x\in X.\vartheta^\natural\vdash_S\vartheta^\natural$. Thus by \ref{T:changingvariables},
\[
(\forall x\in X.\vartheta^\natural)(\tau_f)\vdash_S\vartheta.
\]
Since $u$ is not present in $\forall x\in X.\vartheta^\natural$, it follows, by \ref{t:universalizationontheright}, that
\[
\forall x\in X.\vartheta^\natural\vdash_S\forall u\vartheta,\]
and thus
\[
\vdash_S\forall u\vartheta=\forall x\in X.\vartheta^\natural
\]

Finally, by \ref{t:existentialintroduction}, $\vartheta\vdash_S\exists u\vartheta$. Thus
\begin{align}
x\in X,\vartheta^\natural &\vdash_S(\exists u\vartheta)^\natural &\text{(\ref{T:changingvariables})}\notag\\
&\vdash_S\exists u\vartheta\wedge x\in X &\text{(\ref{C:starandconstants})}\notag\\
&\vdash_S\exists u\vartheta\notag.
\end{align}
Therefore, by \ref{t:existentializationontheleft},
\[
\exists x\in X.\vartheta^\natural\vdash_S\exists u\vartheta.
\]

Conversely, by \ref{t:existentialintroduction}, $x\in X\wedge\vartheta^\natural\vdash_S\exists x\in X.\vartheta^\natural$. Now, by \ref{T:changingvariables}, $\vartheta\vdash_S(\exists x\in X.\vartheta^\natural)(\tau_f)$. Since $u$ is not present in $\exists x\in X.\vartheta^\natural$, by \ref{t:existentializationontheleft}, it follows that
\[
\exists u\vartheta\vdash_S\exists x\in X.\vartheta^\natural.
\]
and
\[
\vdash_S\exists u\vartheta=\exists x\in X.\vartheta^\natural\qedhere
\]
\end{proof}

Interpreting this in the language of Lindenbaum-Tarski algebras (Cf. Chapter VI Section 10 \cite{MR0163850}) produces the following result. 

\begin{corolario} Theorem \ref{T:starpreservinglogicaloperations} together with the Nullstellensatz \ref{A:Nullstellensatz} renders $\vartheta^\natural$ a morphism of Lindenbaum-Tarski algebras. Furthermore, restricting and co-restricting,
\[\xymatrix{
F[y]\ar[r]_(.34){\cong}^(.35){(-)^\natural} & (F[x]\downarrow x\in X).
}\]
\end{corolario}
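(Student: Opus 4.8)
The plan is to read the algebraic statement off of the results already in hand and then upgrade the functor $(-)^\natural:=(-)^\natural_{f^{-1}}$ to an isomorphism by producing an explicit inverse. The Nullstellensatz \ref{A:Nullstellensatz} is used at the outset through \ref{t:unrestrictedcut} to make $\vdash_S$ transitive, so that $\Leftrightarrow$ is a congruence and the Lindenbaum--Tarski quotients of $F$, of $F[y]$, of $F[x]$ and of the slice $(F[x]\downarrow x\in X)$ all make sense.

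First I would check that $(-)^\natural$ descends to equivalence classes: by functoriality of $(-)^\natural_{f^{-1}}$ (Theorem \ref{T:starfisafunctor}), $\vartheta\vdash_S\gamma$ implies $\vartheta^\natural\vdash_S\gamma^\natural$, and applying this in both directions shows $\vdash_S\vartheta\Leftrightarrow\gamma$ implies $\vdash_S\vartheta^\natural\Leftrightarrow\gamma^\natural$. That the induced map is a morphism of Lindenbaum--Tarski algebras is then exactly Theorem \ref{T:starpreservinglogicaloperations}: it preserves every binary connective $\square$ and it preserves $\neg$; and by Corollary \ref{C:starandconstants} it sends $true$ to $x\in X$ and $false$ to $false$. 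I would point out that $true$ is sent not to $true$ but to the top element $x\in X$ of the slice, which is precisely why one must corestrict to $(F[x]\downarrow x\in X)$ to obtain a genuine isomorphism.

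For the restriction and corestriction: by Corollary \ref{C:superfluousvariablesforstar} one may discard superfluous variables, so for $\vartheta\in F[y]$ the formula $\vartheta^\natural$ has at most $x$ free, and by Remark \ref{O:slicingoverxinX} $\vartheta^\natural\vdash_S x\in X$; hence $(-)^\natural$ restricts and corestricts to a map $F[y]\to(F[x]\downarrow x\in X)$ which, by the previous paragraph, preserves the Heyting structure, now including the top $x\in X$. To see it is an isomorphism I would show (a) it is an order embedding and (b) it is surjective on classes. For (a), the last equivalence of Theorem \ref{T:changingvariables} reads $\vartheta\vdash_S\gamma$ iff $\vartheta^\natural,x\in X\vdash_S\gamma^\natural$; since $\vartheta^\natural\vdash_S x\in X$ always, the hypothesis $x\in X$ can be absorbed by the cut rule (and reintroduced by thinning), so $\vartheta\vdash_S\gamma$ iff $\vartheta^\natural\vdash_S\gamma^\natural$, making $(-)^\natural$ order-preserving and order-reflecting, in particular injective modulo $\Leftrightarrow$. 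For (b), given $\beta\in(F[x]\downarrow x\in X)$ I take $\vartheta:=\beta(x/\tau_f)$, which lies in $F[y]$ since $\tau_f$ has at most the variable of type $\mathbf{B}$ free (Proposition \ref{L:correstrictiontoX}), and show $\vdash_S\vartheta^\natural\Leftrightarrow\beta$: the identity $\vdash_S\vartheta^\natural(x/\tau_f)=\vartheta$ of Theorem \ref{T:changingvariables} gives $\vdash_S(\beta(x/\tau_f))^\natural(x/\tau_f)\Leftrightarrow\beta(x/\tau_f)$, and then the first equivalence of Theorem \ref{T:changingvariables}, applied in both directions and using $\beta\vdash_S x\in X$ and $(\beta(x/\tau_f))^\natural\vdash_S x\in X$ to clear the $x\in X$ on the left via cut, yields $\vdash_S(\beta(x/\tau_f))^\natural\Leftrightarrow\beta$. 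An order-preserving, order-reflecting bijection that preserves the connectives and the top and bottom is an isomorphism of Lindenbaum--Tarski algebras, which is the second claim.

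The main obstacle I anticipate is not conceptual but bookkeeping: one must track carefully which variable (the variable $x$ of type $\typea$, or the variable attached to $\tau_f$ of type $\mathbf{B}$) is free in each intermediate formula, verify that every substitution $(x/\tau_f)$ is admissible, and repeatedly invoke $\vartheta^\natural\vdash_S x\in X$ to pass between $(F[x]\downarrow x\in X)$ and $F[x]$ when applying Theorem \ref{T:changingvariables}. All the genuine analytic work has already been done in \ref{L:varthetastarfequalto}, \ref{T:changingvariables} and \ref{T:starpreservinglogicaloperations}; the corollary is their assembly together with Corollaries \ref{C:starandconstants} and \ref{C:superfluousvariablesforstar}.
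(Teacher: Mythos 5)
Your proposal is correct and follows exactly the route the paper intends: the corollary is stated there without a written proof, as an immediate assembly of \ref{T:starfisafunctor}, \ref{T:starpreservinglogicaloperations}, \ref{C:starandconstants}, \ref{C:superfluousvariablesforstar}, \ref{O:slicingoverxinX} and \ref{T:changingvariables}, which is precisely what you carry out. Your added details---descending to $\Leftrightarrow$-classes, absorbing the hypothesis $x\in X$ by cut to get the order embedding, and surjectivity via $\beta\mapsto\beta(x/\tau_f)$ using $\vdash_S\vartheta^\natural(x/\tau_f)=\vartheta$---are the intended (and needed) verifications, so no discrepancy with the paper.
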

This result is in agreement with the remark in \cite[p. 174]{MR856915} saying that in some sense $\topos(S)$ is a Lindenbaum-Tarski category.

\section{Functional representability}\label{s:representabilidad}
The main purpose of this section is to provide the set theoretical counterpart to the results in Section \ref{S:Param}. The results of \hyperlink{T:TeorA}{Theorem A} are proved in Subsection \ref{SS:param}.  In Subsection \ref{SS:funcrep}, the final step in achieving the compatible representation promised by \hyperlink{T:TeorC}{Theorem C} is completed. 

\subsection{Canonical parameterization}\label{SS:param}
The following theorem is the main result of the section: given any $S$-function $f$---represented within $\teo(\topos(S))$ via the canonical translation---, there is a canonical parameterization inside $\teo(\topos(S))$ under which $f$ is represented by itself. 
\begin{teorema}\label{T:Unicidaddeefe}Let $S$ be a local set theory in a local language  $\llocal$. Let $f:X\rightarrow Y$ be an $S$-function with $X$ of type $\mathbf{PA}$ and $Y$ of type $\mathbf{PB}$. Let $i_X$ y $i_Y$ be the functions $(x\mapsto x):X\rightarrow U_{\typea}$ and  $(y\mapsto y):Y\rightarrow U_{\bm B}$, respectively.
Then, 
\begin{equation}\label{E:symboloffasfunction}
\vdash_{\teo(\topos(S))}\langle\bm{i_X}(u),\bm{i_Y}(\typef(u))\rangle\in\lvert f\rvert.
\end{equation}
\end{teorema}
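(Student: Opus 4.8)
The plan is to transport the sequent \eqref{E:symboloffasfunction} across \ref{L:substintoposE} into a statement about arrows of $\topos(S)$, and then to verify that statement by exhibiting the relevant factorization directly from the hypothesis that $f$ is an $S$-function. Concretely, let $z$ be a fresh variable of the same type as $\langle\bm{i_X}(u),\bm{i_Y}(\typef(u))\rangle$; then \eqref{E:symboloffasfunction} is exactly the sequent $\vdash_{\teo(\topos(S))}(z\bm\in\lvert f\rvert)\bigl(z/\langle\bm{i_X}(u),\bm{i_Y}(\typef(u))\rangle\bigr)$. Applying \ref{L:substintoposE} with $\E=\topos(S)$ and $\alpha$ the formula $z\bm\in\lvert f\rvert$ (whose only free variable is $z$), this is equivalent to the existence of an arrow $g$ of $\topos(S)$ with
\[
\overbar{\llbracket z\bm\in\lvert f\rvert\rrbracket_{H_{\topos(S)},z}}\circ g=\llbracket\langle\bm{i_X}(u),\bm{i_Y}(\typef(u))\rangle\rrbracket_{H_{\topos(S)},u},
\]
so it suffices to produce such a $g$.

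I would then compute the two sides. By the clauses (ii)--(iv) defining the interpretation, together with $\typef_{H_{\topos(S)}}=f$, $(\bm{i_X})_{H_{\topos(S)}}=i_X$ and $(\bm{i_Y})_{H_{\topos(S)}}=i_Y$, the right-hand side is the arrow $\langle i_X,\,i_Y\circ f\rangle\colon X\to U_\typea\times U_{\mathbf B}$. For the left-hand side I would use that $\lvert f\rvert$ equals the set $\{z:z\bm\in\lvert f\rvert\}$ (by comprehension and extensionality), so that \ref{L:Xisalphatoo}, applied to $S$ and read through \eqref{E:interpretationcorrespondstofunctions} and \eqref{E:canonical-natural}, shows that $\overbar{\llbracket z\bm\in\lvert f\rvert\rrbracket_{H_{\topos(S)},z}}$ represents the same subobject of $U_\typea\times U_{\mathbf B}$ as the canonical inclusion $i_{\lvert f\rvert}=(w\mapsto w)\colon\lvert f\rvert\hookrightarrow U_\typea\times U_{\mathbf B}$ of \ref{O:ix}. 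Hence the task reduces to exhibiting an $S$-function $g\colon X\to\lvert f\rvert$ with $i_{\lvert f\rvert}\circ g=\langle i_X,i_Y\circ f\rangle$.

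To build $g$, I would take the $S$-function that sends $x\in X$ to the unique member of the set $\lvert f\rvert$ whose first component is $x$; that this is well defined is precisely \ref{P:functionimpliesy=z} applied to $f$ (the existence clause provides the element, the uniqueness clause its unicity), together with $\vdash_S\lvert f\rvert\subseteq X\times Y$ and the Products axioms. A short computation of graphs via \eqref{E:compfun}, \eqref{E:equalfunctions} and the Products axioms then shows that $i_{\lvert f\rvert}\circ g$ and $\langle i_X,i_Y\circ f\rangle$ have the same graph, hence coincide; this supplies the arrow required above and completes the proof.

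The step I expect to be the crux is the identification in the middle paragraph: that the set-theoretic graph $\lvert f\rvert$---part of the data of the $S$-function $f$---is, categorically, exactly the subobject of $U_\typea\times U_{\mathbf B}$ carved out by $\langle i_X,i_Y\circ f\rangle$. The remaining work is unwinding the recursive clauses of $H_{\topos(S)}$ and the dictionary between $\Sub$ and maps into $\subob$; the one point to keep track of is that $\lvert f\rvert$ in \eqref{E:symboloffasfunction} stands for $\eta_S(\lvert f\rvert)$ and that the pertinent interpretation is the natural one of $\llocal(\topos(S))$ in $\topos(S)$, the bridge to the canonical interpretation $K_S$ being \eqref{E:canonical-natural}.
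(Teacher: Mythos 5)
Your proposal is correct, and it takes a genuinely different route from the paper's. The paper works almost entirely inside $\teo(\topos(S))$: it first shows by a long syntactic derivation that the square \eqref{D:fasafunctionsymbol}, built from the singleton map $(y\mapsto\bm{\{}y\bm{\}})$ and $(x\mapsto\bm{\{}y\bm{:}\langle x,y\rangle\in\lvert f\rvert\bm{\}})$, commutes; it then passes to the natural interpretation in $\topos(S)$, where $\llbracket\typef(u)\rrbracket_u=f$ forces the equality of the two set-valued terms, transfers this back through \ref{P:equalityofsymbols} and \eqref{E:TeosiiE}, extracts the biconditional \eqref{E:y=fu} via \ref{t:samesetsamefourmulas}, and concludes with \eqref{E:iu} and existential manipulation. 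You instead apply \ref{L:substintoposE} head-on to turn \eqref{E:symboloffasfunction} into the existence of a factorization of $\langle i_X,i_Y\circ f\rangle$ through $\overbar{\llbracket z\bm{\in}\lvert f\rvert\rrbracket_{z}}$, identify that subobject with $i_{\lvert f\rvert}$ by \ref{L:Xisalphatoo} together with \eqref{E:interpretationcorrespondstofunctions} and \eqref{E:canonical-natural}, and exhibit the factorizing arrow $X\rightarrow\lvert f\rvert$ straight from $\vdash_S\lvert f\rvert\in Y^X$ (with \ref{P:functionimpliesy=z} splitting the unique existence); since $i_{\lvert f\rvert}$ and the chosen representative $\overbar{\llbracket z\bm{\in}\lvert f\rvert\rrbracket_{z}}$ differ only by an isomorphism, this indeed supplies the arrow required in \ref{L:substintoposE}(ii). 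Your argument is shorter and concentrates all the real work into one graph computation in $\topos(S)$, avoiding the singleton detour and the chain \eqref{E:singletonfgraph}--\eqref{E:part2}; what the paper's route buys is the sharper intermediate fact \eqref{E:y=fu} and, more importantly, the auxiliary lemmas \ref{L:belongingequalsamonic} and \ref{L:Ssetisomorphictoitssymboltype} (in particular the isomorphism $r_X$), which are reused in \ref{P:functionisotoitsrepresentation}, \ref{L:naturalf} and the Appendix, whereas on your route those lemmas would still have to be established separately for the later results.
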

\begin{observacion}
Since $\vdash_{\teo(\topos(S))}\exists!v.\langle\bm{i_X}(u),\bm{i_Y}(v) \rangle\in \lvert f\rvert$, by \ref{T:eliminabilityinTeo} $f$ is the unique $S$-function that satisfies \eqref{E:symboloffasfunction}.
\end{observacion}
The following results will be needed in the proof of \ref{T:Unicidaddeefe}.  Their conclusions are the expected. They touch on the compatibility between the canonical translation of a local set theory into that of its linguistic topos and the canonical inclusion of the latter (See \hyperref[A:inter]{Appendix}). 
\begin{lema}\label{L:belongingequalsamonic}
Let $\E$ be a topos and $\alpha$ be a formula in $\mathrm{Th}(\E)$ with a single free variable. Then,
\[\vdash_{\mathrm{Th}(\E)}x\bm{\in\{}x\bm{:}\alpha\bm{\}}\Leftrightarrow\exists u.x\bm{=i}(u),\]
where $i$ is $\overbar{\llbracket\alpha\rrbracket_x}$.
\end{lema}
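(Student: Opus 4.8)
The statement to prove is \ref{L:belongingequalsamonic}: for a topos $\E$ and a formula $\alpha$ in $\mathrm{Th}(\E)$ with a single free variable $x$, one has
\[
\vdash_{\mathrm{Th}(\E)} x\bm{\in\{}x\bm{:}\alpha\bm{\}}\Leftrightarrow\exists u.x\bm{=i}(u),
\]
where $i$ is the monic $\overbar{\llbracket\alpha\rrbracket_x}$.

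\textbf{Plan.} The natural route is to reduce this to \ref{L:substintoposEbis}, which already tells us that for a monic $m$ in $\E$, $\vdash_{\teo(\E)}\bm{\chi(m)}(x)\Leftrightarrow\exists y.x\bm{=m}(y)$. So the task is essentially to identify the left-hand side $x\bm{\in\{}x\bm{:}\alpha\bm{\}}$ with $\bm{\chi(m)}(x)$ for $m=\overbar{\llbracket\alpha\rrbracket_x}=i$. First I would recall the Comprehension axiom, which gives $\vdash x\bm{\in\{}x\bm{:}\alpha\bm{\}}\Leftrightarrow\alpha$ in any local set theory; so it suffices to show $\vdash_{\teo(\E)}\alpha\Leftrightarrow\bm{\chi(i)}(x)$, i.e. that $\alpha$ and the formula $\bm{\chi(i)}(x)$ are provably equivalent in $\teo(\E)$. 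By \eqref{E:TeosiiE} this amounts to the semantic statement $\llbracket\alpha\rrbracket_{H_\E,x}=\llbracket\bm{\chi(i)}(x)\rrbracket_{H_\E,x}$ (both being characteristic arrows $A\to\subob$, equality of subobjects is equality of the arrows).

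\textbf{Key steps.} (1) Compute $\llbracket\bm{\chi(i)}(x)\rrbracket_{H_\E,x}$: by clause (iii) of the interpretation of terms, this is $\chi(i)_{H_\E}\circ\llbracket x\rrbracket_x=\chi(i)\circ 1_A=\chi(i)$, since the function symbol $\bm{\chi(i)}$ is interpreted by $H_\E$ as the $\E$-arrow $\chi(i)$ itself. (2) Compute $\chi(i)$ where $i=\overbar{\llbracket\alpha\rrbracket_x}$: by the standard fact recalled in the Toposes subsection, $\chi(\bar u)=u$ for any $u:A\to\subob$, so $\chi(\overbar{\llbracket\alpha\rrbracket_x})=\llbracket\alpha\rrbracket_x=\llbracket\alpha\rrbracket_{H_\E,x}$. (3) Conclude that $\llbracket\bm{\chi(i)}(x)\rrbracket_{H_\E,x}=\llbracket\alpha\rrbracket_{H_\E,x}$, hence by \ref{P:equalityofsymbols} (applied with empty $\Gamma$, i.e. $\vDash_{H_\E}\bm{\chi(i)}(x)\Leftrightarrow\alpha$) and \eqref{E:TeosiiE}, $\vdash_{\teo(\E)}\bm{\chi(i)}(x)\Leftrightarrow\alpha$. (4) Chain with Comprehension and with \ref{L:substintoposEbis} (taking $m=i$, and noting the existential there uses a variable $y$ of the appropriate type, which we may rename to $u$): $x\bm{\in\{}x\bm{:}\alpha\bm{\}}\vdash\alpha\vdash\bm{\chi(i)}(x)\vdash\exists u.x\bm{=i}(u)$ and back, then apply the Equivalence rule to assemble the biconditional.

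\textbf{Main obstacle.} The argument is essentially bookkeeping; the only subtlety is making sure the interpretation $\llbracket\bm{\chi(i)}(x)\rrbracket$ really collapses to $\chi(i)$ — i.e. that the free variable $x$ here is exactly the variable of type $\mathbf{A}$ over which $\alpha$ is formed, so that $\llbracket x\rrbracket_x=1_A$ and no spurious product projections intervene — and that the single free variable hypothesis on $\alpha$ is genuinely used so that $\overbar{\llbracket\alpha\rrbracket_x}$ is a subobject of $A$ (not of a product). One should also double-check that \ref{L:substintoposEbis} is being invoked with the right monic: $i=\overbar{\llbracket\alpha\rrbracket_x}$ is indeed monic by construction of $\overbar{(-)}$, so this is fine. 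Everything else is a routine application of the dictionary between $\vdash_{\teo(\E)}$, $\vDash_{H_\E}$, and equality of arrows into $\subob$.
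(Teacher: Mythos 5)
Your proposal is correct and follows essentially the same route as the paper's proof: compute $\llbracket\bm{\chi(i)}(x)\rrbracket_x=\chi(i)=\llbracket\alpha\rrbracket_x$, convert this semantic equality into a provable equivalence via \ref{P:equalityofsymbols} and \eqref{E:TeosiiE}, and conclude with \ref{L:substintoposEbis}. The only cosmetic difference is that you pass through the Comprehension axiom syntactically to identify $x\bm{\in\{}x\bm{:}\alpha\bm{\}}$ with $\alpha$, whereas the paper makes the same identification at the level of interpretations; the content is the same.
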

\begin{proof}
Since $i=\overbar{\llbracket\alpha\rrbracket_x}$, it follows that $\llbracket\alpha\rrbracket_x=\chi(i)$. From which,
\begin{align}
\llbracket\bm{\chi(i)}(x)\rrbracket_x &=\chi(i)\circ\llbracket x\rrbracket_x\notag\\
&=\chi(i)\notag\\
&=\llbracket\alpha\rrbracket_x\notag\\
&=\llbracket x\bm{\in\{}x\bm{:}\alpha\bm{\}}\rrbracket_x &\text{(\eqref{E:TeosiiE})};\notag
\end{align}
wherefrom, by \ref{P:equalityofsymbols},
\[\vDash_{H_\E}\bm{\chi(i)}(x)\Leftrightarrow x\bm{\in\{}x\bm{:}\alpha\bm{\}}.\]
Then, $\vdash_{\mathrm{Th}(\E)}\bm{\chi(i)}(x)\Leftrightarrow x\bm{\in\{}x\bm{:}\alpha\bm{\}}$, and, by \ref{L:substintoposEbis},
\[  \vdash_{\mathrm{Th}(\E)}\exists u.x\bm{=i}(u)\Leftrightarrow x\bm{\in\{}x\bm{:}\alpha\bm{\}}.\qedhere\]
\end{proof}

\begin{lema}\label{L:Ssetisomorphictoitssymboltype}
Let $S$ be a local set theory in a local language $\llocal$.  Let $X$ be an $S$-set determined by the formula $\alpha$. Then, 
\begin{equation}\label{E:alphaiu}
\vdash_{\teo(\topos(S))}\alpha(x/\bm{i_X}(u)),
\end{equation}
with $u$ a variable of type $\bm{X}$; and thus 
\begin{equation}\label{E:iu}
\vdash_{\teo(\topos(S))}\bm{i_X}(u)\bm{\in} X.
\end{equation}
Furthermore,  one has that the arrow $r_X:U_{\bm{X}}\rightarrow X$, the corestriction of $(u\mapsto\bm{i_X}(u))$ to $X$,  is an isomorphism.  (Recall that $\bm{X}$ is the type symbol associated with $X$ in $\llocal(\topos(S))$ and that $\bm{i_X}$ is a function symbol not present in $\llocal$).
\end{lema}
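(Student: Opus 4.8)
The plan is to argue throughout inside $\teo(\topos(S))$ by way of its internal interpretation $(\topos(S),H_{\topos(S)})$ together with the soundness--completeness equivalence \eqref{E:TeosiiE}, reading $\alpha$, $X$, $\typea$ as shorthand for their images under $\eta_S$. Everything rests on one identification: that the interpretation of $\alpha$ in $\topos(S)$ is the characteristic arrow of the canonical monic $i_X\colon X\to U_{\typea}$. Indeed, \eqref{E:canonical-natural} gives $\llbracket\alpha\rrbracket_{H_{\topos(S)},x}=\llbracket\alpha\rrbracket_{K_S,x}$, which by \eqref{E:interpretationcorrespondstofunctions} equals the $S$-function $(x\mapsto\alpha)$, and by \ref{L:Xisalphatoo} the latter is precisely $\chi(i_X)$. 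Likewise $\llbracket\bm{i_X}(u)\rrbracket_{H_{\topos(S)},u}=(\bm{i_X})_{H_{\topos(S)}}\circ\llbracket u\rrbracket_{u}=i_X$ by the very definition of $H_{\topos(S)}$.

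With this in hand, \eqref{E:alphaiu} is immediate: by the substitution rule for interpretations \eqref{E:subst},
\[
\llbracket\alpha(x/\bm{i_X}(u))\rrbracket_{H_{\topos(S)},u}=\llbracket\alpha\rrbracket_{H_{\topos(S)},x}\circ\llbracket\bm{i_X}(u)\rrbracket_{H_{\topos(S)},u}=\chi(i_X)\circ i_X=T_X,
\]
the last equality holding because the square of \ref{L:Xisalphatoo} commutes (so the composite factors through $\top$ and is therefore maximal). Hence $\vDash_{H_{\topos(S)}}\alpha(x/\bm{i_X}(u))$, and \eqref{E:TeosiiE} yields $\vdash_{\teo(\topos(S))}\alpha(x/\bm{i_X}(u))$. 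Then \eqref{E:iu} follows by substituting $\bm{i_X}(u)$ for $x$ in the Comprehension axiom $x\bm{\in} X\Leftrightarrow\alpha$ and cutting against \eqref{E:alphaiu}.

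For the last assertion, observe first that \eqref{E:iu} is exactly what licenses the correstriction of $(u\mapsto\bm{i_X}(u))\colon U_{\bm{X}}\to U_{\typea}$ to $X$, so that $r_X\colon U_{\bm{X}}\to X$ has graph $\{\langle u,\bm{i_X}(u)\rangle\}$. I would then show $r_X$ is both monic and epic in the topos $\topos(\teo(\topos(S)))$; since toposes are balanced, $r_X$ is an isomorphism. Monicity: $i_X$ is a monic of $\topos(S)$ (the canonical inclusion of the $S$-subset $X\subseteq U_{\typea}$), so \eqref{E:monicsinTeoE} gives $\bm{i_X}(u)\bm{=}\bm{i_X}(u')\vdash_{\teo(\topos(S))}u\bm{=}u'$; hence $\langle u,z\rangle,\langle u',z\rangle\in|r_X|\vdash_{\teo(\topos(S))}u\bm{=}u'$, i.e. the graph of $r_X$ is injective as a relation and $r_X$ is monic. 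Epimorphism: applying \ref{L:substintoposEbis} to the monic $i_X$ gives $\vdash_{\teo(\topos(S))}\bm{\chi(i_X)}(x)\Leftrightarrow\exists u.\,x\bm{=}\bm{i_X}(u)$; since $\llbracket\bm{\chi(i_X)}(x)\rrbracket_{H_{\topos(S)},x}=\chi(i_X)=\llbracket\alpha\rrbracket_{H_{\topos(S)},x}$, \ref{P:equalityofsymbols} with \eqref{E:TeosiiE} yields $\vdash_{\teo(\topos(S))}\bm{\chi(i_X)}(x)\Leftrightarrow\alpha$, which together with Comprehension gives $\vdash_{\teo(\topos(S))}x\bm{\in} X\Leftrightarrow\exists u.\,x\bm{=}\bm{i_X}(u)$. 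The left-to-right half of this equivalence is precisely the surjectivity of $r_X$.

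The real obstacle is not logical depth but the bookkeeping around the canonical translation and the internal language: one must consistently read $\alpha$ as $\eta_S(\alpha)$, the type of $x$ as the type symbol of the object $U_{\typea}$, and $\bm{i_X}$ as naming the $\topos(S)$-arrow $i_X$, and one must pass fluently between ``$\topos(S)$-monic'', ``monic function symbol of $\teo(\topos(S))$'' via \eqref{E:monicsinTeoE}, and ``$\teo(\topos(S))$-function with injective graph''. Once the identity $\llbracket\alpha\rrbracket_{H_{\topos(S)},x}=\chi(i_X)$ is pinned down, the remaining steps are routine.
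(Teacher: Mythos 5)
Your derivations of \eqref{E:alphaiu} and \eqref{E:iu} follow essentially the paper's route: both rest on the identification $\llbracket\alpha\rrbracket_{H_{\topos(S)},x}=\llbracket\alpha\rrbracket_{K_S,x}=\chi(i_X)$ via \eqref{E:canonical-natural}, \eqref{E:interpretationcorrespondstofunctions} and \ref{L:Xisalphatoo}, the only cosmetic difference being that you compute $\llbracket\alpha(x/\bm{i_X}(u))\rrbracket=\chi(i_X)\circ i_X=T$ directly from \eqref{E:subst}, where the paper invokes \ref{L:substintoposE} on the factorization $\overbar{\llbracket\alpha\rrbracket_x}\circ r=i_X$. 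Where you genuinely diverge is the isomorphism claim: the paper constructs the inverse explicitly, taking $\lvert h\rvert=\bm{\{}\langle x,u\rangle\bm{:i_X}(u)\bm{=}x\bm{\}}$ and checking it is a $\teo(\topos(S))$-function $X\bm{\rightarrow}U_{\bm{X}}$ by combining \ref{L:belongingequalsamonic} with the monicity of $i_X$ to get $\vdash x\bm{\in}X\Leftrightarrow\exists!u.\,x\bm{=i_X}(u)$; you instead prove $r_X$ monic (via \eqref{E:monicsinTeoE}) and epic (via \ref{L:substintoposEbis}, which is the same key fact underlying \ref{L:belongingequalsamonic}) and then appeal to toposes being balanced and to the equivalence of surjectivity with being epic. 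Both arguments are sound and use the same essential inputs; your version leans on standard topos facts that the paper uses elsewhere but does not restate here, while the paper's explicit $h$ has the mild advantage of exhibiting $r_X^{-1}$ concretely, which is convenient later (e.g.\ in \ref{L:naturalf}, where $r_X^{-1}\times r_Y^{-1}$ is used).
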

\begin{proof}
By \ref{L:Xisalphatoo}, the diagram \eqref{E:pullbackalpha} is a pullback diagram in $\topos(S)$, and hence in $\topos(\teo(\topos(S)))$ (where $x$ is a variable of type $\typea$, since the following translation is being used: $\eta_Sx_\typea=x_{\eta_S\typea}=x_\typea$, and $\llbracket\alpha\rrbracket_{K_S,x}=\llbracket\alpha\rrbracket_{H_{\topos(S)},x}$, by \eqref{E:canonical-natural}).

Now, since
\[\overbar{\llbracket\alpha\rrbracket_{H_{\topos(S)},x}}\circ r=i_X=\llbracket\bm{i_X}(u)\rrbracket_{H_{\topos(S)},u}\]
for some isomorphism $r$ in $\topos(S)$, \eqref{E:alphaiu} and \eqref{E:iu} follow from \ref{L:substintoposE}.
Lastly, to see that $r_X$ is an isomorphism, define $\lvert h\rvert$ as the $\teo(\topos(S))$-set 
\[\bm{\{}\langle x,u\rangle\bm{:i_X}(u)\bm{=}x\bm{\}}.\]
As the notation suggests,  $h$ is a function and, thus it is clearly the inverse of $r_X$. Indeed, by \ref{L:belongingequalsamonic} and the fact that $i_X$ is monic,
\[\vdash_{\teo(\topos(S))}x\bm{\in\{}x\bm{:}\alpha\bm{\}}\Leftrightarrow\exists! u.x\bm{=i_X}(u);\]
wherefrom,
\[\vdash_{\teo(\topos(S))}x\bm{\in}X\Rightarrow\exists! u.x\bm{=i_X}(u),\]
and thus $h$ is a $\teo(\topos(S))$-function $X\bm{\rightarrow}U_{\bm{X}}$. 
\end{proof}
\begin{proof}[Proof of \ref{T:Unicidaddeefe}]
Under the canonical translation $\eta_S$, one can regard $S$ as a subtheory of $\teo(\topos(S))$ and the associated logical functor $\topos(\eta_S)$ as the inclusion. 

Consider the following diagram in  $\topos(\teo(\topos(S)))$:
\begin{equation}\label{D:fasafunctionsymbol}
\xymatrix{
X\ar[rrr]^f\ar[dd]_{(x\mapsto x)} & & & Y\ar[d]^{(y\mapsto y)} \\
& & & B\ar[d]^{(z\mapsto\bm{\{}z\bm{\}})} \\
A\ar[rrr]_{(x\mapsto\bm{\{}y\bm{:}\langle x,y\rangle\in\lvert f\rvert\bm{\}})} & & & PB.
}
\end{equation}
It is a commutative diagram; in other words, 
\[
\vdash_{\teo(\topos(S))}\lvert(x\mapsto\bm{\{}y\bm{:}\langle x,y\rangle\in\lvert f\rvert\bm{\}})\rvert\bm{=}\lvert(y\mapsto\bm{\{}y\bm{\}})\circ f\rvert.
\]
To see this, first observe that
\begin{align}
\langle x,z\rangle\in\lvert f\rvert, u\in\bm{\{}z\bm{:}\langle x,z\rangle\in\lvert f\rvert\bm{\}} &\vdash_{\teo(\topos(S))}\langle x,z\rangle\in\lvert f\rvert\wedge\langle x,u\rangle\in\lvert f\rvert\notag\\
&\vdash_{\teo(\topos(S))} u\bm{=}z\qquad\qquad\qquad\text{(\ref{P:functionimpliesy=z})}\notag\\
&\vdash_{\teo(\topos(S))} u\in\bm{\{}z\bm{\}}\notag,
\end{align}
and that
\begin{align}
\langle x,z\rangle\in\lvert f\rvert,u\in\bm{\{}z\bm{\}} &\vdash_{\teo(\topos(S))}\langle x,z\rangle\in\lvert f\rvert\wedge u\bm{=}z\notag\\
&\vdash_{\teo(\topos(S))}\langle x,u\rangle\in\lvert f\rvert\notag\\
&\vdash_{\teo(\topos(S))} u\in\bm{\{}z\bm{:}\langle x,z\rangle\in\lvert f\rvert\bm{\}}\notag;
\end{align}
so that, by the equivalence and extensionality rules,
\begin{equation}\label{E:singletonfgraph}
\langle x,z\rangle\in\lvert f\rvert\vdash_{\teo(\topos(S))}\bm{\{}z\bm{:}\langle x,z\rangle\in\lvert f\rvert\bm{\}=\{}z\bm{\}}.
\end{equation}

Since
\begin{equation}\label{E:omegasingleton}
\langle z,b\rangle\in\lvert(y\mapsto\bm{\{}y\bm{\}})\rvert\vdash_{\teo(\topos(S))}b\bm{=\{}z\bm{\}},
\end{equation}
it follows from \eqref{E:omegasingleton} and \eqref{E:singletonfgraph} that
\[\langle x,z\rangle\in\lvert f\rvert\wedge\langle z,b\rangle\in\lvert(y\mapsto\bm{\{}y\bm{\}})\rvert\vdash_{\teo(\topos(S))}b\bm{=\{}z\bm{:}\langle x,z\rangle\in\lvert f\rvert\bm{\}};\]
wherefrom, by \ref{t:existentializationontheleft},
\[\exists z(\langle x,z\rangle\in\lvert f\rvert\wedge\langle z,b\rangle\in\lvert(y\mapsto\bm{\{}y\bm{\}})\rvert)\vdash_{\teo(\topos(S))}b\bm{=\{}z\bm{:}\langle x,z\rangle\in\lvert f\rvert\bm{\}}.
\]

By \eqref{E:compfun} and the cut rule,
\[
\langle x,b\rangle\in\lvert(y\mapsto\bm{\{}y\bm{\}})\circ f\rvert\vdash_{\teo(\topos(S))}\langle x,b\rangle\bm{=}\langle x,\bm{\{}z\bm{:}\langle x,z\rangle\in\lvert f\rvert\bm{\}}\rangle.
\]
So that, by
\begin{align}
\langle x,b\rangle\in\lvert(y &\mapsto\bm{\{}y\bm{\}})\circ f\rvert\vdash_{\teo(\topos(S))} x\in X\notag\\
&\vdash_{\teo(\topos(S))}\langle x,\bm{\{}z\bm{:}\langle x,z\rangle\in\lvert f\rvert\bm{\}}\rangle\in\lvert(x\mapsto\bm{\{}z\bm{:}\langle x,z\rangle\in\lvert f\rvert\bm{\}})\rvert\notag,
\end{align}
it follows that
\begin{equation}\label{E:part1}
\langle x,b\rangle\in\lvert(y\mapsto\bm{\{}y\bm{\}})\circ f\rvert\vdash_{\teo(\topos(S))}\langle x,b\rangle\in\lvert(x\mapsto\bm{\{}z\bm{:}\langle x,z\rangle\in\lvert f\rvert\bm{\}})\rvert.
\end{equation}
To see the converse of \eqref{E:part1}, notice that
\[
\langle x,z\rangle\in\lvert f\rvert\vdash_{\teo(\topos(S))}z\in Y\vdash_{\teo(\topos(S))}\langle z,\bm{\{}z\bm{\}}\rangle\in\lvert(y\mapsto\bm{\{}y\bm{\}})\rvert
\]
yields
\begin{equation}\label{E:fgivesfcomposedwith{y}}
\langle x,z\rangle\in\lvert f\rvert\vdash_{\teo(\topos(S))}\langle x,z\rangle\in\lvert f\rvert\wedge\langle z,\bm{\{}z\bm{\}}\rangle\in\lvert(y\mapsto\bm{\{}y\bm{\}})\rvert.
\end{equation}
Now, since
\[
\langle x,b\rangle\in\lvert(x\mapsto\bm{\{}z\bm{:}\langle x,z\rangle\in\lvert f\rvert\bm{\}})\rvert\vdash_{\teo(\topos(S))}b\bm{=\{}z\bm{:}\langle x,z\rangle\in\lvert f\rvert\bm{\}},
\]
it follows from \eqref{E:singletonfgraph} and \eqref{E:fgivesfcomposedwith{y}} that
\begin{multline}
\langle x,b\rangle\in\lvert(x\mapsto\bm{\{}z\bm{:}\langle x,z\rangle\in\lvert f\rvert\bm{\}})\rvert,\langle x,z\rangle\in\lvert f\rvert\vdash_{\teo(\topos(S))}\\
\langle x,z\rangle\in\lvert f\rvert\wedge\langle z,b\rangle\in\lvert(y\mapsto\bm{\{}y\bm{\}})\rvert;\notag
\end{multline}
wherefrom, by \ref{t:existentialintroduction},
\begin{multline}
\langle x,b\rangle\in\lvert(x\mapsto\bm{\{}z\bm{:}\langle x,z\rangle\in\lvert f\rvert\bm{\}})\rvert,\langle x,z\rangle\in\lvert f\rvert\vdash_{\teo(\topos(S))}\\
\exists z(\langle x,z\rangle\in\lvert f\rvert\wedge\langle z,b\rangle\in\lvert(y\mapsto\bm{\{}y\bm{\}})\rvert),\notag
\end{multline}
and thus
\begin{multline}
\langle x,b\rangle\in\lvert(x\mapsto\bm{\{}z\bm{:}\langle x,z\rangle\in\lvert f\rvert\bm{\}})\rvert,\langle x,z\rangle\in\lvert f\rvert\vdash_{\teo(\topos(S))}\\
\langle x,b\rangle\in\lvert(y\mapsto\bm{\{}y\bm{\}})\circ f\rvert,\notag
\end{multline}
so that, by \ref{t:existentializationontheleft},
\begin{multline}
\langle x,b\rangle\in\lvert(x\mapsto\bm{\{}z\bm{:}\langle x,z\rangle\in\lvert f\rvert\bm{\}})\rvert,\exists z(\langle x,z\rangle\in\lvert f\rvert)\vdash_{\teo(\topos(S))}\\
\langle x,b\rangle\in\lvert(y\mapsto\bm{\{}y\bm{\}})\circ f\rvert.\notag
\end{multline}
However,
\begin{align}
\langle x,b\rangle\in\lvert(x\mapsto\bm{\{}z\bm{:}\langle x,z\rangle\in\lvert f\rvert\bm{\}})\rvert &\vdash_{\teo(\topos(S))} x\in X\notag\\
&\vdash_{\teo(\topos(S))}\exists! z(\langle x,z\rangle\in\lvert f\rvert)\notag\\
&\vdash_{\teo(\topos(S))}\exists z(\langle x,z\rangle\in\lvert f\rvert),\notag
\end{align}
and thus
\begin{equation}\label{E:part2}
\langle x,b\rangle\in\lvert(x\mapsto\bm{\{}z\bm{:}\langle x,z\rangle\in\lvert f\rvert\bm{\}})\rvert\vdash_{\teo(\topos(S))}\langle x,b\rangle\in\lvert(y\mapsto\bm{\{}y\bm{\}})\circ f\rvert.
\end{equation}
Now, by the equivalence rule applied to \eqref{E:part1} and \eqref{E:part2}, it follows from \eqref{E:equalfunctions} that \eqref{D:fasafunctionsymbol} commutes.

Under the natural interpretation of $\teo(\topos(S))$ in $\topos(S)$---denoted by $\llbracket\tau\rrbracket_{\bm{x}}=\llbracket\tau\rrbracket_{H_{\topos(S)},\bm{x}}$, where $\tau$ is a term of $\teo(\topos(S))$---it follows from \eqref{E:interpretationcorrespondstofunctions} and \eqref{E:canonical-natural} that the following diagram is the same as \eqref{D:fasafunctionsymbol} and that it also commutes:
\[
\xymatrix{
X\ar[rrr]^f\ar[d]_1 & & & Y\ar[d]^{\llbracket\bm{\{i_Y}(v)\bm{\}}\rrbracket_v} \\
X\ar[rrr]^{\llbracket\bm{\{}y\bm{:}\langle\bm{i_X}(u),y\rangle\in\lvert f\rvert\bm{\}}\rrbracket_u}\ar[dr]_{\llbracket\bm{i_X}(u)\rrbracket_u} & & & PB;\\
& A\ar[urr]|(.371)\hole_{\llbracket\bm{\{}y\bm{:}\langle x,y\rangle\in\lvert f\rvert\bm{\}}\rrbracket_x.} & &
}
\]

Since $\llbracket \typef(u)\rrbracket_u=f$, it follows that 
\[
\llbracket\bm{\{}y\bm{:}\langle\bm{i_X}(u),y\rangle\in\lvert f\rvert\bm{\}}\rrbracket_u=\llbracket\bm{\{i_Y}(\typef(u))\bm{\}}\rrbracket_u;
\]
and, by \ref{P:equalityofsymbols},
\[\vDash_{H_{\topos(S)}}\bm{\{}y\bm{:}\langle\bm{i_X}(u),y\rangle\in\lvert f\rvert\bm{\}=\{i_Y}(\typef(u))\bm{\}}.\]
Therefore, by \eqref{E:TeosiiE},
\[\vdash_{\teo(\topos(S))}\bm{\{}y\bm{:}\langle\bm{i_X}(u),y\rangle\in\lvert f\rvert\bm{\}=\{i_Y}(\typef(u))\bm{\}}.\]
Thus, since $\bm{\{i_Y}(\typef(u))\bm{\}}$ is by definition $\bm{\{}y\bm{:}y\bm{=i_Y}(\typef(u))\bm{\}}$, by \ref{t:samesetsamefourmulas},
\begin{equation}\label{E:y=fu}
\vdash_{\teo(\topos(S))}y\bm{=i_Y}(\typef(u))\Leftrightarrow\langle\bm{i_X}(u),y\rangle\in\lvert f\rvert.
\end{equation}
From this,
\begin{align}
\langle\bm{i_X}(u),y\rangle\in\lvert f\rvert &\vdash_{\teo(\topos(S))}y\bm{=i_Y}(\typef(u))\wedge\langle\bm{i_X}(u),y\rangle\in\lvert f\rvert\notag\\
&\vdash_{\teo(\topos(S))}\langle\bm{i_X}(u),\bm{i_Y}(\typef(u))\rangle\in\lvert f\rvert\notag.
\end{align}

So that, by \ref{t:existentializationontheleft},
\[\exists y(\langle\bm{i_X}(u),y\rangle\in\lvert f\rvert)\vdash_{\teo(\topos(S))}\langle\bm{i_X}(u),\bm{i_Y}(\typef(u))\rangle\in\lvert f\rvert.\]
On the other hand, by \eqref{E:iu},
\begin{align}
\vdash_{\teo(\topos(S))}\bm{i_X}(u)\in X &\vdash_{\teo(\topos(S))}\exists!y(\langle\bm{i_X}(u),y\rangle\in\lvert f\rvert)\notag\\
&\vdash_{\teo(\topos(S))}\exists y(\langle\bm{i_X}(u),y\rangle\in\lvert f\rvert)\notag.
\end{align}
wherefrom, by the cut rule, \eqref{E:symboloffasfunction} follows.
\end{proof}
\begin{corolario}\label{P:functionisotoitsrepresentation}
Let $S$ be a local set theory in a local language $\llocal$. Let $f:X\rightarrow Y$ be an $S$-function with $X$ of type $\mathbf{PA}$ and $Y$ of type $\mathbf{PB}$. Then, $f$ is the only $S$-function for which the following diagram commutes in $\topos(\teo(\topos(S)))$:
\begin{equation}\label{E:diagramanatural}
\xymatrix{
X\ar[r]^\cong\ar[d]_f & U_{\bm{X}}\ar[d]^{(x\mapsto\typef(x))} \\
Y\ar[r]_\cong & U_{\bm{Y}}.
}
\end{equation}
In other words, the inclusion functor $\iota_{\topos(S)}$ and the logical functor induced by the canonical translation $\eta_S$ are isomorphic:
\[\topos(\eta_S)\cong \iota_{\topos(S)}:\topos(S)\rightarrow\topos(\teo(\topos(S))).\]
\end{corolario}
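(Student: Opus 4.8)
The plan is to let Lemma~\ref{L:Ssetisomorphictoitssymboltype} and Theorem~\ref{T:Unicidaddeefe} carry all the weight, so that only a short compatibility check remains. By Lemma~\ref{L:Ssetisomorphictoitssymboltype}, for each $S$-set $X$ of type $\mathbf{PA}$ the correstriction $r_X\colon U_{\bm X}\to X$ of $(u\mapsto\bm{i_X}(u))$ to $X$ is an isomorphism in $\topos(\teo(\topos(S)))$; I would write $h_X\colon X\to U_{\bm X}$ for its inverse, the $\teo(\topos(S))$-function with graph $\bm{\{}\langle x,u\rangle\bm{:i_X}(u)\bm{=}x\bm{\}}$ exhibited in the proof of that lemma. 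The maps $h_X$ and $h_Y$ will be the horizontal isomorphisms in \eqref{E:diagramanatural}.

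First I would check that, with these choices, \eqref{E:diagramanatural} commutes, i.e. that $(x\mapsto\typef(x))\circ h_X=h_Y\circ f$ as $\teo(\topos(S))$-functions $X\to U_{\bm Y}$. Computing both graphs via \eqref{E:compfun}, for a variable $v$ of type $\bm Y$ one finds that $\langle x,v\rangle\in\lvert(x\mapsto\typef(x))\circ h_X\rvert$ is equivalent over $\teo(\topos(S))$ to $\exists u(x\bm{=i_X}(u)\wedge v\bm{=}\typef(u))$, while $\langle x,v\rangle\in\lvert h_Y\circ f\rvert$ is equivalent to $\langle x,\bm{i_Y}(v)\rangle\in\lvert f\rvert$. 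For $x\bm{\in}X$, Lemma~\ref{L:Ssetisomorphictoitssymboltype} (through \ref{L:belongingequalsamonic} and the monicity of $i_X$) furnishes a unique $u$ with $x\bm{=i_X}(u)$; feeding this into \eqref{E:symboloffasfunction} and the Equality axiom gives $\langle x,\bm{i_Y}(\typef(u))\rangle\in\lvert f\rvert$, and comparing with $\langle x,\bm{i_Y}(v)\rangle\in\lvert f\rvert$ through \ref{P:functionimpliesy=z} and \eqref{E:iu} forces $\bm{i_Y}(v)\bm{=i_Y}(\typef(u))$, whence $v\bm{=}\typef(u)$ by monicity of $i_Y$ (cf. \eqref{E:monicsinTeoE}). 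The reverse implication is immediate from \eqref{E:symboloffasfunction} and Equality. By \eqref{E:equalfunctions} the two $\teo(\topos(S))$-functions coincide, so \eqref{E:diagramanatural} commutes.

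For the uniqueness clause I would argue that if an $S$-function $g\colon X\to Y$ also makes \eqref{E:diagramanatural} commute, then $h_Y\circ\topos(\eta_S)(g)=(x\mapsto\typef(x))\circ h_X=h_Y\circ\topos(\eta_S)(f)$; since $h_Y$ is an isomorphism, $\topos(\eta_S)(g)=\topos(\eta_S)(f)$. As $\eta_S$ is a conservative translation, $\topos(\eta_S)$ is faithful---two $S$-functions with the same image have equivalent translated graphs, hence, by conservativity and \eqref{E:equalfunctions}, are equal---so $g=f$. (Alternatively, one may simply quote the Remark following \ref{T:Unicidaddeefe}, which already singles out $f$ via \ref{T:eliminabilityinTeo}.)

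Finally, for the ``in other words'' statement I would assemble the $h_X$ into a natural transformation $h\colon\topos(\eta_S)\Rightarrow\iota_{\topos(S)}$. Under the identifications used throughout ($\eta_S\typea=\typea$, $\eta_S X=X$), one has $\topos(\eta_S)(X)=X\cong U_{\bm X}=\iota_{\topos(S)}(X)$ with $h_X$ the component, an isomorphism by Lemma~\ref{L:Ssetisomorphictoitssymboltype}; and naturality at an $S$-function $f$ is exactly the commuting square \eqref{E:diagramanatural}, once one recalls that $\iota_{\topos(S)}$ sends an arrow $f$ to $(x\mapsto\typef(x))$ (see the Appendix). I expect the only genuine friction to be bookkeeping: keeping these identifications consistent and pinning down the action of $\iota_{\topos(S)}$ on morphisms, since all of the real content has already been discharged in Theorem~\ref{T:Unicidaddeefe} and Lemma~\ref{L:Ssetisomorphictoitssymboltype}.
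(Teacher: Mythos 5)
Your proposal is correct and follows the paper's own route: the paper likewise takes the horizontal isomorphisms to be the (inverses of the) maps $r_X$, $r_Y$ from \ref{L:Ssetisomorphictoitssymboltype} and observes that commutativity of \eqref{E:diagramanatural} is just a reformulation of \eqref{E:symboloffasfunction}, with uniqueness coming from the remark after \ref{T:Unicidaddeefe} (equivalently, from invertibility of $h_Y$ plus conservativity of $\eta_S$). You merely spell out the graph computation and the naturality bookkeeping that the paper leaves implicit, which is fine.
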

\begin{proof}
Equation \eqref{E:symboloffasfunction} is equivalent to the commutativity of diagram \eqref{E:diagramanatural}.
\end{proof}

\subsection{Strict representation}\label{SS:funcrep}
All that remains to verify is that the arrow $(u\mapsto \typef(u))$ corresponds to $f$ as syntactic functions under the change of variables of  \ref{T:changingvariables}. To this effect, the following result gives an explicit description of the required translation of $f$.
\begin{lema}\label{L:naturalf} Let $S$ be a local set theory in a local language $\llocal$. Let $f:X\rightarrow Y$ be an $S$-function, and let $\gamma$ be a formula that determines $\lvert f\rvert$. Then 
\begin{equation}\label{E:fstar1} \vdash_{Th(\topos(S))}(\gamma(x/\bm{i_X}(u),y/\bm{i_Y}(v)))^\natural_{r_X^{-1}\times r_Y^{-1}}=\gamma.
\end{equation}
Furthermore, 
\[
(\{\langle u,v\rangle:\gamma(x/\bm{i_X}(u),y/\bm{i_Y}(v))\}, U_{\bf X}, U_{\bf Y} )
\]
is a $\teo(\topos(S))$-function henceforth denoted by  $f^*$.  
\end{lema}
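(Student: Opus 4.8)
The plan is to unfold both assertions from Lemma \ref{L:varthetastarfequalto}, after which the first becomes a direct computation with the Equality axiom and the second amounts to identifying the proposed function with $(u\mapsto\typef(u)):U_{\bm X}\to U_{\bm Y}$. Set $\delta:=\gamma(x/\bm{i_X}(u),y/\bm{i_Y}(v))$, a formula whose free variables are $u$ (of type $\bm X$) and $v$ (of type $\bm Y$). By \ref{L:Ssetisomorphictoitssymboltype} the maps $r_X:U_{\bm X}\to X$ and $r_Y:U_{\bm Y}\to Y$ are isomorphisms in $\topos(\teo(\topos(S)))$; since $r_X$ is the correstriction of $(u\mapsto\bm{i_X}(u))$ to $X$, one has $\langle u,x\rangle\in\lvert r_X\rvert\Leftrightarrow x\bm{=i_X}(u)$ (and similarly for $r_Y$), so $g:=r_X^{-1}\times r_Y^{-1}:X\times Y\to U_{\bm X}\times U_{\bm Y}$ is a $\teo(\topos(S))$-function with $\langle\langle x,y\rangle,\langle u,v\rangle\rangle\in\lvert g\rvert$ provably equivalent to $x\bm{=i_X}(u)\wedge y\bm{=i_Y}(v)$.

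For \eqref{E:fstar1}, apply \ref{L:varthetastarfequalto} to $g$ — with the pair $\langle u,v\rangle$ playing the role of the single codomain variable, which is legitimate after \ref{C:superfluousvariablesforstar} and the product conventions — to get
\[
\vdash_{\teo(\topos(S))}\delta^{\natural}_{g}\Leftrightarrow\exists u\exists v\bigl(x\bm{=i_X}(u)\wedge y\bm{=i_Y}(v)\wedge\delta\bigr).
\]
That the right-hand side is provably equivalent to $\gamma$ is then checked in the two standard directions. From $\gamma$, i.e.\ $\langle x,y\rangle\in\lvert f\rvert$, one has that $x$ and $y$ lie in $X$ and $Y$ (as $\lvert f\rvert\subseteq X\times Y$), so $r_X^{-1}$ and $r_Y^{-1}$ provide $u,v$ with $x\bm{=i_X}(u)$ and $y\bm{=i_Y}(v)$ (\ref{L:Ssetisomorphictoitssymboltype}), and the Equality axiom turns $\gamma$ into $\gamma(x/\bm{i_X}(u),y/\bm{i_Y}(v))=\delta$ for these witnesses; existentializing gives the implication. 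Conversely, given such $u,v$ and $\delta$, the Equality axiom (used with $\bm{i_X}(u)\bm{=}x$ and $\bm{i_Y}(v)\bm{=}y$) recovers $\gamma$, and $u,v$ are then existentialized away on the left by \ref{t:existentializationontheleft}. Since the connective $\Leftrightarrow$ is by definition $\bm{=}$ (clause (L1)), this equivalence is exactly \eqref{E:fstar1}.

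For the ``Furthermore'' clause it suffices to show
\[
\vdash_{\teo(\topos(S))}\delta\Leftrightarrow v\bm{=}\typef(u),
\]
for then $\{\langle u,v\rangle:\delta\}=\lvert(u\mapsto\typef(u))\rvert$ is the graph of the $\teo(\topos(S))$-function $(u\mapsto\typef(u)):U_{\bm X}\to U_{\bm Y}$, so $f^{*}$ is a $\teo(\topos(S))$-function (indeed $f^{*}=(u\mapsto\typef(u))$). For $\Leftarrow$: \ref{T:Unicidaddeefe} gives $\vdash_{\teo(\topos(S))}\langle\bm{i_X}(u),\bm{i_Y}(\typef(u))\rangle\in\lvert f\rvert$, and the Equality axiom passes from $\typef(u)\bm{=}v$ to $\langle\bm{i_X}(u),\bm{i_Y}(v)\rangle\in\lvert f\rvert$, i.e.\ to $\delta$. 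For $\Rightarrow$: from $\delta$ one has $\langle\bm{i_X}(u),\bm{i_Y}(v)\rangle\in\lvert f\rvert$, which together with \ref{T:Unicidaddeefe} and the functionality of $f$ yields $\bm{i_Y}(v)\bm{=i_Y}(\typef(u))$ by \ref{P:functionimpliesy=z}; since $i_Y$ is monic, \eqref{E:monicsinTeoE} gives $v\bm{=}\typef(u)$.

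I expect the only genuine friction to be bookkeeping rather than anything conceptual: correctly feeding the two-variable formula $\delta$ into $(-)^{\natural}_{g}$ for the product parameterization $g=r_X^{-1}\times r_Y^{-1}$ (pairing $u,v$ into one variable of type $\bm X\times\bm Y$ and splitting the output back into $x,y$), together with the several applications of the Equality axiom under the substitutions $x/\bm{i_X}(u)$ and $y/\bm{i_Y}(v)$ — for which $u,v$ must be chosen fresh so that the substitutions are admissible.
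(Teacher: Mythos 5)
Your proof is correct, but it follows a genuinely different route from the paper's. For \eqref{E:fstar1} the paper does not unfold $(-)^\natural$ syntactically at all: it exhibits a single ``pullback of pullbacks'' diagram over $\subob$, factoring $(\langle x,y\rangle\mapsto\gamma)$ through $i_X\times i_Y$ and $r_X\times r_Y$, and observes that the argument is \emph{mutatis mutandis} the one already carried out in the proof of \ref{T:changingvariables}; you instead invoke \ref{L:varthetastarfequalto} to rewrite $\delta^\natural_{r_X^{-1}\times r_Y^{-1}}$ as $\exists u\exists v(x\bm{=i_X}(u)\wedge y\bm{=i_Y}(v)\wedge\delta)$ and close the equivalence with the Equality axiom, \ref{L:Ssetisomorphictoitssymboltype} and the existential rules --- a purely sequent-level computation, whose pairing/unpairing bookkeeping is at the same level of informality as the statement of the lemma itself. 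For the ``Furthermore'' clause the divergence is sharper: the paper gets functionality of $f^*$ abstractly from \ref{T:starpreservinglogicaloperations} (the parameterization preserves the connectives and quantifiers occurring in the formula saying $\lvert f\rvert\in Y^X$), whereas you identify $f^*$ outright with $(u\mapsto\typef(u))$ via \ref{T:Unicidaddeefe}, \ref{P:functionimpliesy=z} and the monicity of $i_Y$ through \eqref{E:monicsinTeoE}. This is legitimate and not circular (\ref{T:Unicidaddeefe} is proved earlier and independently of this lemma), and it buys you \ref{T:TeoA} essentially for free; the cost is that your argument leans on the heavier representability theorem \ref{T:Unicidaddeefe}, while the paper's route stays inside the parameterization machinery of Section \ref{S:Param} and keeps $f^*=(u\mapsto\typef(u))$ as a separate, later conclusion.
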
 
\proof Denote by $\lvert f^*\rvert$ the set $\{\langle u,v\rangle:\gamma(x/\bm{i_X}(u),y/\bm{i_Y}(v))\}$. The equation \eqref{E:fstar1} follows from the fact that the following is a pullback of pullbacks:
\[\xymatrix{
\lvert f^*\rvert\ar[rr]^\cong\ar[d]_{i_{\lvert f^*\rvert}} & & \lvert f\rvert\ar[rrr]\ar[d]_{j_{\lvert f\rvert}} & & & 1\ar[d]^\top \\
U_\mathbf{X}\times U_\mathbf{Y}\ar[rr]^{r_X\times r_Y}_\cong\ar@/_1.5pc/[rrr]_(.65){(\langle u,v\rangle\mapsto\langle\bm{i}_X(u),\bm{i}_Y(v)\rangle)}\ar@/_3pc/[rrrrr]_{(\langle u,v\rangle\mapsto\gamma(\bm{i}_X(u),\bm{i}_Y(v)))} & & X\times Y\ar[r]^{i_X\times i_Y} & A\times B\ar[rr]^(.55){(\langle x,y\rangle\mapsto\gamma)} & & \subob,
}\]
the proof of which is {\em mutatis mutandis} already within the proof of \ref{T:changingvariables}. A direct application of \ref{T:starpreservinglogicaloperations} proves that  $f^*$ is a function.
\endproof
\begin{teorema}\label{T:TeoA}
Let $S$ be a local set theory in a local language $\llocal$. Let $X$ and $Y$ be $S$-sets, with $\alpha$ and $\beta$ the formulae determining them, resp., and let $f:X\rightarrow Y$ be an $S$-function with $\gamma$ the formula that determines $\lvert f\rvert$. Then,
\begin{equation}\label{E:ufuiniotaf}
\vdash_{Th(\topos(S))} \langle u,\typef(u) \rangle\in \lvert f^*\rvert.
\end{equation}
In other words, that $f^*=(u\mapsto \typef(u))$.
\end{teorema}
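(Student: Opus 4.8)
The plan is to derive \eqref{E:ufuiniotaf} as an essentially immediate consequence of \ref{T:Unicidaddeefe}, once the explicit description of $\lvert f^*\rvert$ furnished by \ref{L:naturalf} is unwound, and then to read off the reformulation $f^*=(u\mapsto\typef(u))$ from the equality criterion \eqref{E:equalfunctions}.

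First I would record two derivable equivalences in $\teo(\topos(S))$. By \ref{L:naturalf}, $\lvert f^*\rvert$ is the set $\{\langle u,v\rangle:\gamma(x/\bm{i_X}(u),y/\bm{i_Y}(v))\}$, so carrying out the standard unwinding of membership in a comprehension term $\{\tau:\alpha\}$ via the Comprehension and Products axioms (with the customary renaming of bound variables) gives
\[
\vdash_{\teo(\topos(S))}\bigl(\langle u,\typef(u)\rangle\in\lvert f^*\rvert\bigr)\Leftrightarrow\gamma\bigl(x/\bm{i_X}(u),\,y/\bm{i_Y}(\typef(u))\bigr).
\]
Since $\gamma$ determines $\lvert f\rvert$, i.e.\ $\lvert f\rvert=\{\langle x,y\rangle:\gamma\}$, the same unwinding gives
\[
\vdash_{\teo(\topos(S))}\bigl(\langle\bm{i_X}(u),\bm{i_Y}(\typef(u))\rangle\in\lvert f\rvert\bigr)\Leftrightarrow\gamma\bigl(x/\bm{i_X}(u),\,y/\bm{i_Y}(\typef(u))\bigr).
\]
None of these substitutions cause capture, since $\bm{i_X}(u)$ and $\typef(u)$ each have $u$ as their only variable. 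Now \ref{T:Unicidaddeefe} asserts precisely $\vdash_{\teo(\topos(S))}\langle\bm{i_X}(u),\bm{i_Y}(\typef(u))\rangle\in\lvert f\rvert$, so chaining the two equivalences delivers \eqref{E:ufuiniotaf}.

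For the reformulation, note that $f^*$ is a $\teo(\topos(S))$-function $U_{\bm X}\to U_{\bm Y}$ by \ref{L:naturalf}, and that $(u\mapsto\typef(u))$ is one as well, since $\typef(u)$ is a term of type $\bm Y$, so that $u\in U_{\bm X}\vdash_{\teo(\topos(S))}\typef(u)\in U_{\bm Y}$; its graph moreover satisfies $\vdash_{\teo(\topos(S))}\bigl(\langle u,v\rangle\in\lvert(u\mapsto\typef(u))\rvert\bigr)\Leftrightarrow v=\typef(u)$. From \eqref{E:ufuiniotaf} and the Equality axiom one gets $v=\typef(u)\vdash_{\teo(\topos(S))}\langle u,v\rangle\in\lvert f^*\rvert$, while, $f^*$ being a function, \ref{P:functionimpliesy=z} supplies $\langle u,v\rangle\in\lvert f^*\rvert,\ \langle u,\typef(u)\rangle\in\lvert f^*\rvert\vdash_{\teo(\topos(S))}v=\typef(u)$; cutting the latter against \eqref{E:ufuiniotaf} yields $\langle u,v\rangle\in\lvert f^*\rvert\vdash_{\teo(\topos(S))}v=\typef(u)$. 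The Equivalence rule then produces $u\in U_{\bm X}\vdash_{\teo(\topos(S))}\langle u,v\rangle\in\lvert f^*\rvert\Leftrightarrow\langle u,v\rangle\in\lvert(u\mapsto\typef(u))\rvert$, and \eqref{E:equalfunctions} gives $f^*=(u\mapsto\typef(u))$.

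I do not expect a deep obstacle: the one point requiring genuine care is the bookkeeping in the two comprehension unwindings --- tracking the free variables through the substitutions $x/\bm{i_X}(u)$, $y/\bm{i_Y}(\typef(u))$, $y/\bm{i_Y}(v)$ and $v/\typef(u)$, and verifying the freeness side-conditions of the Substitution, Equality and quantifier rules that are invoked when membership in $\{\tau:\alpha\}$ is expanded. Conceptually the theorem is the clause that welds \ref{T:Unicidaddeefe} --- which represents $f$ by the \emph{syntactic} symbol $\typef$ after inserting $\bm{i_X}$ and $\bm{i_Y}$ --- onto the explicit graph of $f^*$ given in \ref{L:naturalf}, so that in the parameterized picture of \ref{T:changingvariables} the function $f$ is at last represented by itself.
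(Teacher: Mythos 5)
Your proposal is correct and follows essentially the same route as the paper: it obtains \eqref{E:ufuiniotaf} from \ref{T:Unicidaddeefe} by unwinding the comprehension description of $\lvert f^*\rvert$ given in \ref{L:naturalf}, and then concludes $f^*=(u\mapsto\typef(u))$ from the fact that both are functions with the same graph, exactly as the paper does (you merely spell out via \ref{P:functionimpliesy=z} and \eqref{E:equalfunctions} what the paper leaves as ``the conclusion now follows'').
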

\begin{proof}
It is immediate that
\[
\langle\bm{i_X}(u),\bm{i_Y}(\typef(u))\rangle\in\lvert f\rvert \vdash_{\teo(\topos(S))}\langle u,\typef(u)\rangle\in\lvert f^*\rvert\notag,
\]
thus \eqref{E:ufuiniotaf} follows by \ref{T:Unicidaddeefe}. Finally, since  by \ref{L:naturalf} both are functions, the conclusion now follows.
\end{proof}


\appendix
\renewcommand\thesection{\hspace{-4pt}}

\section{A remark on the syntactic sets of $\teo(\topos(S))$.}\label{A:inter}

\renewcommand\thesection{\Alph{section}}

The purpose of this section is to prove \ref{P:natcanint}, where the relationship between the natural and the canonical interpretations is given explicitly in terms of the logical functor associated to the canonical translation. 

The {\em canonical inclusion} $\iota_\E:\E\rightarrow\topos(\mathrm{Th}(\E))$ is defined for $\E$-objects $A$ as $\iota_\E A:=U_\typea$ and for  $\E$-arrows $A\xrightarrow{f}B$ as $\iota_\E f:=(x\mapsto\typef(x)):U_\typea\bm{\rightarrow}U_\mathbf{B}$.

As part of the proof of Bell's Equivalence Theorem, he constructs its {\em canonical equivalence} $\rho_\E:\topos(\mathrm{Th}(\E))\rightarrow\E$ as follows: For $\bm{\{}x\bm{:}\alpha\bm{\}}$ of type $\mathbf{PA}$ in $\topos(\mathrm{Th}(\E))$, let $\rho_\E \bm{\{}x\bm{:}\alpha\bm{\}}:=\dom\overbar{\llbracket\alpha\rrbracket_x}$ in $\E$. For $\bm{\{}x\bm{:}\alpha\bm{\}}\overset{f}{\bm{\rightarrow}}\bm{\{}y\bm{:}\beta\bm{\}}$
in $\topos(\mathrm{Th}(\E))$, let $\rho_\E f:\rho_\E \bm{\{}x\bm{:}\alpha\bm{\}}\bm{\rightarrow}\rho_\E \bm{\{}y\bm{:}\beta\bm{\}}$ be the unique $g$ such that 
\begin{equation}\label{E:definingGf}
\vdash_{\mathrm{Th}(\E)}\langle\bm{i_{\{x:\alpha\}}}(u),\bm{i_{\{y:\beta\}}}(\bm{g}(u))\rangle\bm{\in}\lvert f\rvert.
\end{equation}
If $X$ is an $S$-set and $\mathfrak{X}$ is an $\teo(\topos(S))$-set of type $\bm{PX}$, let $j_{\rho_{\E}\mathfrak{X}}=(\lvert i_{\rho_{\E}\mathfrak{X}}\rvert,\rho_{\E}\mathfrak{X},X)$. It also follows that the corestriction of $(u\mapsto \bm{j_{\rho_\E(\mathfrak{X})}}(u))$,
\begin{equation}\label{E:epsilon} 
s_{\bm{\rho_\E(\mathfrak{X})}}:U_{\bm{\rho_\E(\mathfrak{X})}}\longrightarrow \mathfrak{X},
\end{equation}
is an isomorphism (cf. \cite{MR972257}).

The following proposition gives an explicit description of how $\teo(\topos(S))$ produces no extraneous objects to those in $\topos(S)$. 
\begin{prop}\label{P:natcanint}
Let $S$ be a local set theory in a local language $\llocal$. Let $X=\{x:\alpha\}$ be an $S$-set of type $\mathbf{PA}$ and $\mathfrak{X}=\{u:\gamma\}$ a $\teo(\topos(S))$-set with $u$ of type $\mathbf{X}$, and set $\rho_{\teo(S)}\mathfrak{X}=\{x:(\llbracket\gamma\rrbracket_{H_{\topos(S)},u})^\natural(x)\}$ in $S$. Then,

\begin{equation}\label{E:canrep}
\eta_S(\rho_{\teo(S)}\mathfrak{X})\cong \mathfrak{X}.
\end{equation}
Furthermore,
\begin{equation}\label{E:naturalcanonical}\llbracket\gamma\rrbracket_{K_{\teo(\topos(S))},u}=\topos(\eta_S)\left(\llbracket\gamma\rrbracket_{H_{\topos(S)},u}\right)\circ (u\mapsto\bm{i_X}(u)).
\end{equation}
\end{prop}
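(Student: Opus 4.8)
The plan is to peel off the layers of nested languages and reduce both \eqref{E:canrep} and \eqref{E:naturalcanonical} to the comparison between the canonical and the natural interpretations that is already packaged in \ref{L:Ssetisomorphictoitssymboltype} and \ref{P:functionisotoitsrepresentation}. The starting observation, valid for any topos $\E$, is a pure bookkeeping identity: the canonical interpretation $K_{\teo(\E)}$ of $\llocal(\E)$ in $\topos(\teo(\E))$ is the composite $\iota_\E\circ H_\E$ of the natural interpretation with the canonical inclusion, because both send a type symbol $\mathbf Z$ to $U_{\mathbf Z}$ and a function symbol $\mathbf f$ to $(x\mapsto\typef(x))$. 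Since $\iota_\E$ is logical and logical functors preserve every ingredient used to interpret a term (finite products, the subobject classifier with its point $\top$, power objects, evaluation and equality maps), a routine induction on term structure gives $\llbracket\tau\rrbracket_{\iota_\E\circ H_\E,\bm w}=\iota_\E\bigl(\llbracket\tau\rrbracket_{H_\E,\bm w}\bigr)$; and, likewise, two interpretations agreeing on $\uno$ and $\mathbf\Omega$ and related on the remaining symbols by a family of isomorphisms compatible with the function symbols transform the interpretation of each term covariantly along that family.

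Specializing to $\E=\topos(S)$: by \ref{L:Ssetisomorphictoitssymboltype} the maps $r_Z\colon U_{\mathbf Z}\to Z$ are isomorphisms in $\topos(\teo(\topos(S)))$ (identifying $Z$ with $\eta_S(Z)=\topos(\eta_S)(Z)$), equal to identities on $\uno$ and $\mathbf\Omega$, and by \ref{P:functionisotoitsrepresentation} (equivalently, the commutativity of \eqref{E:diagramanatural}) the family $(r_Z)$ is compatible with the function symbols, so that it realizes an isomorphism of interpretations $K_{\teo(\topos(S))}=\iota_{\topos(S)}\circ H_{\topos(S)}\ \cong\ \topos(\eta_S)\circ H_{\topos(S)}$. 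Feeding the formula $\gamma$ — whose sole free variable $u$ has type $\mathbf X$, so that its interpretation on either side is an arrow out of $U_{\mathbf X}$, resp. $\eta_S(X)$, into $\subob$ — into the covariance statement yields at once
\[
\llbracket\gamma\rrbracket_{K_{\teo(\topos(S))},u}\;=\;\topos(\eta_S)\bigl(\llbracket\gamma\rrbracket_{H_{\topos(S)},u}\bigr)\circ r_X,
\]
and since $r_X$ is by definition the corestriction of $(u\mapsto\bm{i_X}(u))$ to $X$, this is \eqref{E:naturalcanonical}.

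For \eqref{E:canrep} I would first note that the $S$-set prescribed in the statement, $\{x:(\llbracket\gamma\rrbracket_{H_{\topos(S)},u})^\natural(x)\}$, is by \ref{P:omegafunctions}(i) and \ref{L:Xisalphatoo} exactly $\dom\overbar{\llbracket\gamma\rrbracket_{H_{\topos(S)},u}}$, i.e. $\rho_{\topos(S)}\mathfrak X$ in the sense of Bell's canonical equivalence recalled in the Appendix. Then
\[
\eta_S(\rho_{\topos(S)}\mathfrak X)=\topos(\eta_S)(\rho_{\topos(S)}\mathfrak X)\ \cong\ \iota_{\topos(S)}(\rho_{\topos(S)}\mathfrak X)=U_{\bm{\rho_{\topos(S)}(\mathfrak X)}}\ \cong\ \mathfrak X,
\]
the first isomorphism by \ref{P:functionisotoitsrepresentation}, the last being Bell's isomorphism $s_{\bm{\rho_{\topos(S)}(\mathfrak X)}}$ of \eqref{E:epsilon}. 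Alternatively, \eqref{E:canrep} drops out of \eqref{E:naturalcanonical}: its left-hand side classifies the monic $\mathfrak X\hookrightarrow U_{\mathbf X}$ (by \ref{L:Xisalphatoo} applied to $\teo(\topos(S))$ and $\gamma$), while its right-hand side, being $\chi\bigl(\eta_S(\rho_{\topos(S)}\mathfrak X)\hookrightarrow X\bigr)\circ r_X$, classifies the pullback of $\eta_S(\rho_{\topos(S)}\mathfrak X)\hookrightarrow X$ along the isomorphism $r_X$; equating these subobjects of $U_{\mathbf X}$ is exactly \eqref{E:canrep}.

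The only real obstacle is notational: one must disentangle the three nested languages ($\llocal$; the internal language $\llocal(\topos(S))$; and the language of $\teo(\topos(S))$, interpreted in $\topos(\teo(\topos(S)))$) carefully enough to be certain that the map $r_X$ occurring in \eqref{E:naturalcanonical}, the component at $X$ of the natural isomorphism of \ref{P:functionisotoitsrepresentation}, and Bell's isomorphism $s$ of \eqref{E:epsilon} all coincide under these identifications. Once the dictionary is fixed, the two routine inductions mentioned above supply the rest and no further idea is needed.
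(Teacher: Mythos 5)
Your argument is correct in substance and, for \eqref{E:canrep}, it coincides with the paper's: both identify the set $\rho_{\teo(S)}\mathfrak X$ of the statement with Bell's $\rho_{\topos(S)}\mathfrak X$ and then compose the isomorphism $s$ of \eqref{E:epsilon} with the isomorphism $r_{\rho_{\teo(S)}\mathfrak X}$ of \ref{L:Ssetisomorphictoitssymboltype} (equivalently, with the component of the natural isomorphism of \ref{P:functionisotoitsrepresentation}). Where you genuinely diverge is \eqref{E:naturalcanonical}: the paper never appeals to a general principle about interpretations; it pastes the pullback \eqref{E:natdia} (coming from \ref{L:Xisalphatoo}) against the pullback classifying $i_{\mathfrak X}$, transports it along the inverse of $r_X$, and concludes by uniqueness of characteristic arrows---an argument using only results already proved. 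You instead derive the formula from two meta-lemmas: that logical functors commute with term interpretation, and that a family of isomorphisms relating two interpretations, compatible with the function symbols (here supplied exactly by \eqref{E:diagramanatural}) and the identity at $\uno$ and $\bm{\Omega}$, transports $\llbracket\,\cdot\,\rrbracket$ covariantly. This is more conceptual---it exhibits \eqref{E:naturalcanonical} as an instance of naturality of interpretation---but neither lemma appears in the paper, so a complete write-up would have to carry out the inductions, including the canonical extension of the family $(r_Z)$ to product and power types and its compatibility with pairing, projections, comprehension, membership and the evaluation arrows. One overstatement should be repaired: the strict identity $K_{\teo(\E)}=\iota_\E\circ H_\E$, and the assertion that $\iota_\E$ preserves the specified structure on the nose, hold only at ground types; at product and power types the two sides differ by canonical isomorphisms (the specified power of $U_{\mathbf Z}$ is the universal set of the compound type $\Pot\mathbf Z$, not the universal set of the ground type symbol attached to the object $PZ$). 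This is harmless for your conclusion, because your second, isomorphism-of-interpretations formulation already covers it and the relating isomorphisms are identities at $\bm{\Omega}$ and equal $r_X$ at $\mathbf X$, so the result is still an honest equality; but the strict-equality claim itself should be dropped.
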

\begin{proof} As per usual, $\topos(\eta_S)$ is represented as an inclusion.

Denote $\llbracket\gamma\rrbracket_{H_{\topos(S)},u}^\natural(x)$ by $\gamma^\natural$. By \ref{L:Xisalphatoo}, the following is a pullback diagram:
\[\xymatrix{
\rho_{\teo(S)}\mathfrak{X}\ar[r]\ar[d]_{i_{\rho_{\teo(S)}\mathfrak{X}}} & 1\ar[d]^\top \\
A\ar[r]_{(x\mapsto\gamma^\natural)} & \subob.
}\]
Since it is clear that $x\in \rho_{\teo(S)}\mathfrak{X}\vdash_Sx\in X$, the following is also readily verified to be a pullback diagram:
\begin{equation}\label{E:natdia}
\xymatrix{
\rho_{\teo(S)}\mathfrak{X}\ar[rr]\ar[d]_{j_{\rho_{\teo(S)}\mathfrak{X}}} & & 1\ar[d]^\top \\
X\ar[rr]_{\llbracket\gamma\rrbracket_{H_{\topos(S)},u}} & & \subob.
}
\end{equation}
On the other hand, also by \ref{L:Xisalphatoo}, the following is a pullback diagram:
\[\xymatrix{
\mathfrak{X}\ar[r]\ar[d]_{i_{\mathfrak{X}}} & 1\ar[d]^\top \\
U_{\mathbf{X}}\ar[r]_{(u\mapsto\gamma)} & \subob.
}\]
Observe that $\rho_{\topos(S)}i_{\mathfrak{X}}=j_{\rho_{\teo(S)}\mathfrak{X}}$, and thus, $\iota_{\topos(S)}\rho_{\topos(S)}i_{\mathfrak{X}}=(u\mapsto\bm{j_{\rho_{\teo(S)}\mathfrak{X}}}(u))$. Therefore, by \eqref{E:epsilon}, \ref{L:Ssetisomorphictoitssymboltype} and \ref{P:functionisotoitsrepresentation}, the following diagram commutes:
\[
\xymatrix{
\mathfrak{X}\ar[d]_{i_{\mathfrak{X}}} & & U_{\bm{\rho_{\teo(S)}\mathfrak{X}}}\ar[ll]^\cong_{s_{\rho_{\teo(S)}\mathfrak{X}}}\ar[d]^{(u\mapsto\bm{j_{\rho_{\teo(S)}\mathfrak{X}}}(u))}\ar[rr]_\cong^{r_{\rho_{\teo(S)}\mathfrak{X}}} & & \rho_{\teo(S)}\mathfrak{X}\ar[d]^{j_{\rho_{\teo(S)}\mathfrak{X}}} \\
U_{\mathbf{X}} & & U_{\mathbf{X}}\ar[ll]^1\ar[rr]^\cong_{r_X} & & X.
}
\]

This finishes the proof of \eqref{E:canrep}. Lastly, let $h$ be the inverse of $r_X=(u\mapsto\bm{i_X}(u)):U_{\mathbf{X}}\rightarrow X$ and consider the following commutative diagram:
\[\xymatrix{
\rho_{\teo(S)}\mathfrak{X}\ar[r]^\cong\ar[d]_{j_{\rho_{\teo(S)}\mathfrak{X}}} & U_{\bm{\rho_{\teo(S)}\mathfrak{X}}}\ar[rr]^\cong\ar[d]^{(u\mapsto\bm{j_{\rho_{\teo(S)}\mathfrak{X}}}(u))} & & \mathfrak{X}\ar[r]\ar[d]^{i_{\mathfrak{X}}} & 1\ar[d]^\top \\
X\ar[r]_h & U_{\mathbf{X}}\ar[rr]_1 & & U_{\mathbf{X}}\ar[r]_{(u\mapsto\gamma)} & \subob.
}\]
Since the right square is a pullback, the top arrows of the middle and left squares are iso and $j_{\rho_{\teo(S)}\mathfrak{X}}$ is monic, the exterior diagram is a pullback. Wherefrom, by \eqref{E:natdia}, \eqref{E:naturalcanonical} follows.
\end{proof}


\bibliographystyle{plainnat}
\bibliography{../ref}

\begin{thebibliography}{5}
\providecommand{\natexlab}[1]{#1}
\providecommand{\url}[1]{\texttt{#1}}
\expandafter\ifx\csname urlstyle\endcsname\relax
  \providecommand{\doi}[1]{doi: #1}\else
  \providecommand{\doi}{doi: \begingroup \urlstyle{rm}\Url}\fi

\bibitem[Bell(1988)]{MR972257}
J.~L. Bell.
\newblock \emph{Toposes and local set theories}, volume~14 of \emph{Oxford
  Logic Guides}.
\newblock The Clarendon Press, Oxford University Press, New York, 1988.
\newblock ISBN 0-19-853274-1.
\newblock An introduction, Oxford Science Publications.

\bibitem[Johnstone(1977)]{MR0470019}
P.~T. Johnstone.
\newblock \emph{Topos theory}.
\newblock Academic Press [Harcourt Brace Jovanovich, Publishers], London-New
  York, 1977.
\newblock ISBN 0-12-387850-0.
\newblock London Mathematical Society Monographs, Vol. 10.

\bibitem[Lambek and Scott(1986)]{MR856915}
J.~Lambek and P.~J. Scott.
\newblock \emph{Introduction to higher order categorical logic}, volume~7 of
  \emph{Cambridge Studies in Advanced Mathematics}.
\newblock Cambridge University Press, Cambridge, 1986.
\newblock ISBN 0-521-24665-2.

\bibitem[Mac~Lane and Moerdijk(1994)]{MR1300636}
Saunders Mac~Lane and Ieke Moerdijk.
\newblock \emph{Sheaves in geometry and logic}.
\newblock Universitext. Springer-Verlag, New York, 1994.
\newblock ISBN 0-387-97710-4.
\newblock A first introduction to topos theory, Corrected reprint of the 1992
  edition.

\bibitem[Rasiowa and Sikorski(1963)]{MR0163850}
Helena Rasiowa and Roman Sikorski.
\newblock \emph{The mathematics of metamathematics}.
\newblock Monografie Matematyczne, Tom 41. Pa\'{n}stwowe Wydawnictwo Naukowe,
  Warsaw, 1963.

\end{thebibliography}

\end{document}